\DeclareMathAlphabet{\mathscrbf}{OMS}{mdugm}{b}{n}
\tikzset{
  no line/.style={draw=none,
    commutative diagrams/every label/.append style={/tikz/auto=false}},
  from/.style args={#1 to #2}{to path={(#1)--(#2)\tikztonodes}}}
\title[Stone duality between condensed mathematics and algebraic geometry]{
Stone duality between condensed mathematics \\ and algebraic geometry}
\author{Rok Gregoric}
\thanks{Johns Hopkins University}
\date{\today}
\address{Johns Hopkins University, Baltimore, MD 21218, USA}
\email{rgrego12@jhu.edu}
\newtheorem{theoremi}{Theorem}
\newtheorem{theorem}{Theorem}[section]
\newtheorem{quest}[theorem]{Question}
\newtheorem{corollary}[theorem]{Corollary}
\newtheorem{lemma}[theorem]{Lemma}
\newtheorem{prop}[theorem]{Proposition}
\theoremstyle{definition}
\newtheorem{definition}[theorem]{Definition}
\newtheorem{cons}[theorem]{Construction}
\newtheorem{exun}[theorem]{Example}
\newtheorem{remark}[theorem]{Remark}
\newcommand*{\CAlg}{{\operatorname{CAlg}}}
\newcommand*{\CRing}{{\operatorname{CRing}}}
\newcommand*{\mC}{\mathcal C}
\newcommand*{\mD}{\mathcal D}
\newcommand*{\mU}{\mathcal U}
\newcommand*{\mP}{\mathcal P}
\newcommand*{\mX}{\mathcal X}
\newcommand*{\mY}{\mathcal Y}
\newcommand*{\mZ}{\mathcal Z}
\newcommand*{\mS}{\mathcal S}
\newcommand*{\sF}{\mathscr F}
\newcommand*{\sheafhom}{\mathscr{H}\kern -.5pt om}
\DeclareMathOperator{\Novak}{\mathscr{N}\text{\kern -3pt {\calligra\large ovak}}\,\,}
\DeclareMathOperator{\fHom}{\mathscr{H}\text{\kern -3pt {\calligra\large om}}\,}
\DeclareMathOperator{\id}{\operatorname{id}}
\DeclareMathOperator{\Hom}{\operatorname{Hom}}
\DeclareMathOperator{\Fun}{\operatorname{Fun}}
\DeclareMathOperator{\Spec}{\operatorname{Spec}}
\DeclareMathOperator{\QCoh}{\operatorname{QCoh}}
\DeclareMathOperator{\Mod}{\operatorname{Mod}}
\renewcommand{\i}{\infty}
\renewcommand{\i}{\infty}
\renewcommand{\i}{\infty}
\renewcommand{\o}{\otimes}
\newcommand{\stackspace}{3.5}
\newcommand{\stack}[2][1cm]{\;\tikz[baseline, yshift=.65ex]%
    {\foreach \k [evaluate=\k as \r using (.5*#2+.5-\k)*\stackspace] in {1,...,#2}{%
    \ifodd\k{\draw[<-](0,\r pt)--(#1,\r pt);}%
    \else{\draw[->](0,\r pt)--(#1,\r pt);}\fi
    }}\;}
\pgfplotsset{compat=1.18}
\begin{document}

\begin{abstract}
We extend Stone duality to a fully faithful embedding of condensed sets into fpqc sheaves over an arbitrary field, which preserves colimits and finite limits. We study how familiar notions from condensed mathematics/topology and algebraic geometry correspond to each other under this form of Stone duality.
\end{abstract}

\maketitle

\epigraph{
\textit{As it turned out, [algebraic geometry] seems to have acquired
the reputation of being esoteric, exclusive and very abstract with adherents who
are secretly plotting to take over all the rest of mathematics!}
}{D.~Mumford, \cite{Mumford}}



\renewcommand*{\thetheoremi}{\Alph{theoremi}}

The main result of this paper is that topology, in the guise of condensed mathematics, can be fully and faithfully embedded into algebraic geometry by a form of Stone duality. The corresponding algebro-geometric spaces are not as pathological as one might expect, and their geometric properties reflect many topological properties on the other side.\\

Condensed mathematics, pioneered by  Barwick-Haine \cite{Pyknotic}, Clausen-Scholze \cite{Condensed}, and anticipated by Lurie \cite{Ultra}, is roughly an alternative to point-set topology. Its chief advantage is that it interacts better and more robustly with algebraic and categorical constructions.
Up to set-theoretic nuances, a condensed set is defined to be a sheaf on the category of profinite sets $\mathrm{ProFin}$. The connection to topology comes from the identification of profinite sets with totally disconnected compact Hausdorff topological spaces.

On the other hand, the most celebrated result concerning profinite sets is \textit{Stone duality}, first obtained in \cite{Stone on duality}. It may be expressed as a contravariant equivalence of categories
\begin{equation}\label{Classic Stone}
\mathrm{ProFin}^\mathrm{op}\simeq \mathrm{CAlg}_{\mathbf F_2}^\mathrm{Bool}
\end{equation}
between profinite sets and \textit{Boolean rings}: rings $A$ in which $a=a^2$ holds for every $a\in A$. The latter are equivalent to Boolean algebras of lattice theory and logic, but we prefer to adopt the perspective of viewing them as special kinds of rings.
In this paper, we extend Stone duality to relate condensed mathematics and algebraic geometry.
\begin{theoremi}[Stone duality for condensed sets; $\mathbf F_2$-version]\label{Main thm F_2}
The equivalence of categories \eqref{Classic Stone} induces an
 adjunction between condensed sets and fpqc sheaves over $\mathbf F_2$
$$
\mathrm{Cond}\rightleftarrows \mathrm{Shv}^\mathrm{fpqc}_{\mathbf F_2},
$$
whose left adjoint constituent preserves all finite limits and is fully faithful.
\end{theoremi}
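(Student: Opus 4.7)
The plan is to realize the desired adjunction as the one induced on sheaf topoi by the Stone duality functor, viewed as a morphism of Grothendieck sites. Let $\iota\colon \mathrm{ProFin}\to \mathrm{Aff}_{\mathbf F_2}$, $S\mapsto \Spec(C(S,\mathbf F_2))$, be the fully faithful functor arising from \eqref{Classic Stone} composed with the inclusion of Boolean rings into all commutative $\mathbf F_2$-algebras. I equip $\mathrm{ProFin}$ with its coherent topology (covers = finite jointly surjective families), so that $\mathrm{Cond}=\mathrm{Shv}(\mathrm{ProFin})$, and extract the adjunction with right adjoint $R(X)(S)=X(\iota(S))$ and left adjoint $L$ given by left Kan extension along $\iota$ followed by fpqc sheafification.

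Two geometric inputs make this machinery run. First, $\iota$ preserves finite limits: under \eqref{Classic Stone} these correspond to finite colimits in $\mathrm{CAlg}^{\mathrm{Bool}}_{\mathbf F_2}$, and these coincide with colimits computed in $\mathrm{CAlg}_{\mathbf F_2}$ since tensor products and quotients of Boolean rings remain Boolean (with $(a\otimes b)^2=a\otimes b$ and $x^2=x$ being stable under quotients). Second, $\iota$ sends coherent covers to fpqc covers: a surjection $T\twoheadrightarrow S$ of profinite sets corresponds under Stone duality to an injection $C(S,\mathbf F_2)\hookrightarrow C(T,\mathbf F_2)$, which is automatically faithfully flat because Boolean rings are absolutely flat (von Neumann regular). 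These two properties yield the adjunction $L\dashv R$, and the standard theory of lex left Kan extensions together with the left exactness of fpqc sheafification then makes $L$ finite-limit preserving.

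For full faithfulness it suffices to show that the unit $\eta_Y\colon Y\to RL(Y)$ is an isomorphism, and since sheaves on $\mathrm{ProFin}$ are detected on profinite test objects, one only needs the identification $Y(T)\simeq L(Y)(\iota(T))$ for every profinite $T$. On a representable $Y=S$ this is immediate from Stone duality,
$$
RL(S)(T)=\Hom_{\mathrm{Aff}_{\mathbf F_2}}(\iota(T),\iota(S))=\Hom_{\mathrm{ProFin}}(T,S)=S(T),
$$
and for general $Y$ the presheaf-level left Kan extension $L^{\mathrm{pre}}$ already delivers the correct answer: by full faithfulness of $\iota$, the slice category $(\iota/\iota(T))$ has terminal object $(T,\mathrm{id})$, so $L^{\mathrm{pre}}(Y)(\iota(T))=Y(T)$.

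The main obstacle is then to show that the subsequent fpqc sheafification leaves these sections unchanged, despite $\iota(T)$ admitting exotic fpqc covers like $\Spec(\overline{\mathbf F}_2)\to \Spec(\mathbf F_2)$ that are not the images of coherent covers on $\mathrm{ProFin}$. I would handle this by reducing via Gleason projective covers to the case of extremally disconnected $T$, for which $\iota(T)$ is projective enough that every fpqc hypercover is \v{C}ech-refined by one coming from the coherent topology on $\mathrm{ProFin}$, rendering the sheafification inert at these sections. A cleaner conceptual route would be to exhibit $\iota$ as a continuous and cocontinuous morphism of sites whose induced topologies are compatible, invoking the SGA4 comparison lemma to conclude that the resulting morphism of topoi is automatically an embedding.
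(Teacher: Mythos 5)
Your overall architecture matches the paper's: realize the adjunction as left Kan extension along $\iota$ followed by fpqc sheafification, with the right adjoint being restriction, verify $\iota$ is lex and cover-preserving, and reduce full faithfulness to the unit being invertible on sections over profinite test objects. All of this is sound, and your reduction to the statement ``sheafification does not change sections over $\iota(T)$'' correctly isolates the crux.

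Where the proposal diverges from the paper, and where I see a genuine gap, is in how that crux is resolved. The paper's key technical tool, which you do not use, is the \emph{pearl functor} $(-)^\circ\colon \CAlg_k\to \CAlg_k^\mathrm{Stone}$, the right adjoint to the inclusion. This is what makes the presheaf-level left Kan extension explicit (it is literally precomposition with $(-)^\circ$, hence pointwise, hence trivially preserving all limits), and more importantly, it is what drives the full faithfulness argument. The paper shows that for a condensed set $X$ the presheaf $A\mapsto X(A^\circ)$ is separated, so its sheafification is a single Grothendieck plus construction; the crucial input is then the lemma that for a faithfully flat $A\to B$ with $A$ Stone, the comparison map $B^\circ\otimes_A B^\circ\to (B\otimes_A B)^\circ$ is itself faithfully flat (Proposition \ref{Pearl and products}), which is what lets one replace the \v{C}ech nerve of $B$ by that of $B^\circ$ inside the equalizer. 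This comparison map is not an isomorphism in general (see Example \ref{Idem no prod}), and it is precisely this discrepancy between $B^\circ\otimes_A B^\circ$ and $(B\otimes_A B)^\circ$ that you need to control; your proposal never engages with it.

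Your proposed route via Gleason covers does not address that point and, as stated, relies on a claim I do not believe. Taking $T=\ast$ (which is extremally disconnected) and the fpqc cover $\Spec(\mathbf F_4)\to\Spec(\mathbf F_2)$, there is no profinite/Boolean cover refining it in the sense needed: the Stone refinement is the identity $\Spec(\mathbf F_2)\to\Spec(\mathbf F_2)$, and it does not factor through $\Spec(\mathbf F_4)$. More to the point, even though every fpqc cover $\Spec(B)\to\Spec(A)$ with $A$ Stone factors as $\Spec(B)\to\Spec(B^\circ)\to\Spec(A)$ with the latter a Stone cover, the sheaf condition is imposed on the \v{C}ech nerve $B\otimes_A B$, not on $B^\circ\otimes_A B^\circ$, and this is exactly where projectivity arguments give you nothing: extremal disconnectedness makes $\iota(T)$ projective for effective surjections of profinite sets, not for faithfully flat ring maps. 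Your alternative ``cleaner conceptual route'' via the SGA4 comparison lemma is closer in spirit to what the paper does, but the hypothesis you would have to verify there unpacks to essentially the same statement about \v{C}ech nerves, so it does not avoid the missing lemma. I would suggest introducing the pearl (the maximal Stone subalgebra), establishing that it preserves faithfully flat maps and that $B^\circ\otimes_A B^\circ\to (B\otimes_A B)^\circ$ is faithfully flat, and then running the plus-construction computation; that is the missing step.

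Finally, a minor point: the proposal does not address the set-theoretic issues that arise because the fpqc and profinite sites are not small. The paper handles this by working with $\kappa$-small cutoffs and passing to a colimit over inaccessible cardinals; some such device is necessary even to make sense of ``fpqc sheafification.''
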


The most salient aspect of Theorem \ref{Main thm F_2} is the fully faithfulness assertion. It means that condensed sets can always be understood as fpqc sheaves. This process of passing from topology/condensed mathematics to algebraic geometry loses no information and reflects a surprising amount of properties and structure into familiar algebro-geometric terms:

\begin{theoremi}[Theorems \ref{condensed is geometric}, \ref{Shv = QCoh},
Propositions \ref{non-algebraic}, \ref{Global functions}, \ref{Closed prop}, \ref{Open prop}, Corollary \ref{points are points}]
Stone duality gives a dictionary between condensed mathematics and algebraic geometry, under which:
\begin{center}
\begin{tabular}{| |c|| c| | } \hline \hline
\textbf{condensed set $X$} &  \textbf{fpqc sheaf $\mX$} \\
\hline \hline
$X$ is a profinite set &$\mX$ is an affine scheme\\
\hline
$X$ is a compact Hausdorff space & $\mX$ is a geometric space \\ 
\hline
$X$ is a CGWH space & $\mX$ is an ind-geometric space \\ 
\hline
$X$ is an arbitrary condensed set & $\mX$ is a quotient of an ind-geometric space  \\   & by an ind-geometric equivalence relation  \\ 
\hline
$X$ is a discrete condensed set & $\mX$ is an algebraic space loc.\,of fin.\,pres. \\
\hline
open/closed embeddings into $X$ & open/closed immersions into $\mX$\\
\hline
the underlying set $X(*)$ & the set of $\mathbf F_2$-points $\mX(\mathbf F_2)$\\
\hline
$\Hom_{\mathrm{Cond}}(X, \mathbf F_2)$ & $\Gamma(\mX; \CMcal O_{\mX})$\\
\hline
$\mathrm{Shv}_{\Mod_{\mathbf F_2}}(X)$ & $\QCoh(\mX)$ \\
\hline
\end{tabular} 
\end{center}
\noindent where a \textit{geometric space} is roughly an fpqc-version of an algebraic space.
\end{theoremi}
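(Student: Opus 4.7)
My approach is to deduce the nine rows of the dictionary from Theorem~A---the fully faithful, colimit- and finite-limit-preserving embedding---together with classical Stone duality \eqref{Classic Stone}. The first, seventh, and eighth rows dispatch the base case of profinite $X$: by construction the embedding sends $X$ to $\Spec B_X$ where $B_X = \Hom_{\mathrm{Cond}}(X, \mathbf F_2)$ is the Boolean ring of clopens, so $\mX$ is an affine $\mathbf F_2$-scheme; the global sections of its structure sheaf are $B_X$ itself, and the $\mathbf F_2$-points are Boolean ring maps $B_X \to \mathbf F_2$, which recover $X(*)$ by \eqref{Classic Stone}.

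For rows two through four, I would proceed by presentation. Define a \emph{geometric space} to be an fpqc sheaf admitting an affine fpqc cover with affine diagonal (the fpqc analogue of an algebraic space). Any compact Hausdorff $X$ admits a profinite surjection $P \twoheadrightarrow X$---for instance, from the Stone--\v{C}ech compactification of the underlying set of $X$---and the fibre product $P \times_X P$ is closed in $P \times P$, hence again profinite. Applying the embedding, which preserves finite limits and colimits, converts this into a presentation $\Spec B_{P \times_X P} \rightrightarrows \Spec B_P \twoheadrightarrow \mX$, exhibiting $\mX$ as geometric; fully faithfulness supplies the converse. Rows three and four then follow by ind-extending and by using that every condensed set is a coequaliser of coproducts of profinite sets.

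The remaining rows follow by pushing appropriate universal properties through the adjunction. A discrete condensed set is a coproduct of points and so maps to a disjoint union of copies of $\Spec \mathbf F_2$, which is locally of finite presentation and an algebraic space. For open and closed embeddings into a profinite $X$, Stone duality matches closed subsets with quotient Boolean rings (yielding closed immersions) and clopen subsets with idempotent localisations; an arbitrary open is a filtered union of clopens, which the embedding sends to a filtered union of open immersions, still an open immersion. The general condensed case reduces to the profinite one by colimit preservation applied to a chosen presentation. Finally, the $\QCoh$ row follows by fpqc descent: if $|\Spec B_\bullet| \to \mX$ is the \v{C}ech nerve of a presentation, then $\QCoh(\mX) \simeq \lim \Mod_{B_\bullet}$, which matches $\Shv_{\Mod_{\mathbf F_2}}(X)$ stagewise via classical Stone duality.

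The principal obstacle I anticipate is setting up the notion of geometric space so that rows two through four are genuine equivalences rather than one-sided identifications: one must verify that no spurious fpqc sheaves of this shape enter the essential image, and that the passage to ind-objects and to general coequalisers is compatible with the embedding. The open-immersion statement is also delicate, because opens in a profinite set are in general not quasi-compact, so the corresponding schemes will typically fail to be affine and one has to justify that their fpqc gluing on the algebro-geometric side still yields an open immersion into $\mX$.
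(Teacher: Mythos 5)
Your overall architecture---deduce the dictionary from the fully faithful, colimit/finite-limit preserving embedding plus classical Stone duality on the profinite base case---matches the paper's, and rows~1--4 and~9 are handled in essentially the same spirit (present compact Hausdorff spaces as quotients of profinite sets by profinite equivalence relations, push through colimit preservation, match $\QCoh$ and sheaves of modules via fpqc descent on a \v{C}ech nerve). But there are three concrete gaps.

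First, rows~7 and~8 are stated for an \emph{arbitrary} condensed set $X$, not just profinite $X$. You only establish them in the profinite base case and never return to them. The paper does these via the adjunction directly: $\Gamma(\mX; \CMcal O_{\mX}) \simeq \Hom_{\mathrm{Shv}_k^{\mathrm{fpqc}}}(f^*X, \mathbf A^1_k) \simeq \Hom_{\mathrm{Cond}}(X, f_*\mathbf A^1_k)$, together with the computation that $f_*(\mathbf A^1_k)$ is the discrete condensed set $k$. Similarly, $X(*) \simeq \mX(\mathbf F_2)$ follows from $X \simeq f_* f^*(X)$ (full faithfulness) evaluated at $S = *$. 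These are short adjunction arguments, but they are not a consequence of the profinite case plus colimit preservation in any obvious way, since $\Gamma(-;\CMcal O)$ and $(-)(\mathbf F_2)$ do not commute with colimits.

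Second, row~5 as you present it is one-directional: you observe that discrete $X$ maps to a disjoint union of copies of $\Spec\mathbf F_2$, which is trivially an algebraic space locally of finite presentation. The substantive claim is the converse: if $\mX$ is an algebraic space locally of finite presentation, then $X$ is discrete. The paper's proof of Proposition~\ref{non-algebraic} uses that Stone algebras are weakly \'etale to deduce $L_{\mX/k} = 0$, hence $\mX \to \Spec k$ is formally \'etale; combined with local finite presentation this forces $\mX$ to be an \'etale $k$-scheme, hence a disjoint union of $\Spec$ of separable extensions, all of which must be $k$ itself since they must be Stone. You would need an argument of comparable strength; nothing in your plan rules out, say, a compact Hausdorff space whose p\'etra space happens to be an algebraic space locally of finite presentation.

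Third, for open embeddings you correctly anticipate the quasi-compactness issue, but the "filtered union of clopens" approach only addresses one direction. The harder half is showing that every open immersion $\mU \to \mX$ into a p\'etra space is itself a p\'etra space. The paper does this via Proposition~\ref{FOP for Haus} (the mapping-space description of the p\'etra space of a compactly generated Hausdorff space, applicable because opens in profinite sets are such) and universality of colimits in the topos; your plan does not supply a mechanism for this converse. Relatedly, the closed-embedding direction in the paper requires the field to be finite (used in Lemma~\ref{Lemma on closed immersions}), and you should flag where finiteness of $\mathbf F_2$ enters.
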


\vspace{0.26em}

\noindent At this point, it is natural to ask whether there is something special about the field $\mathbf F_2$ that makes it possible to embed condensed mathematics into algebraic geometry over it. We show this is not the case. On the contrary, our results remain valid -- and are proved  -- over an arbitrary field $k$, with the added assumption that $k$ is finite only required at a few select places.

\begin{theoremi}[Stone duality for condensed sets; $k$-version -- Theorem \ref{FF on topoi}]\label{Main thm k}
Let $k$ be a field. There is a canonical
 adjunction between condensed sets and fpqc sheaves over $k$
$$
\mathrm{Cond}\rightleftarrows \mathrm{Shv}^\mathrm{fpqc}_{k},
$$
whose left adjoint constituent preserves all finite limits and is fully faithful.
\end{theoremi}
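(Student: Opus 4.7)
My plan is to argue via a Diaconescu-style universal property: construct a well-behaved functor $F\colon \mathrm{ProFin}\to \mathrm{Shv}^{\mathrm{fpqc}}_k$, promote it to a left-exact left adjoint $L\colon \mathrm{Cond}\to \mathrm{Shv}^{\mathrm{fpqc}}_k$ by the universal property of the site presenting $\mathrm{Cond}$, and finally check fully faithfulness of $L$ by a Hom-set computation amounting to a $k$-linear generalization of classical Stone duality. This mirrors the $\mathbf F_2$-case (Theorem~\ref{Main thm F_2}), with the crucial computation being where the field hypothesis enters.

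I would define $F(S)=\Spec \mathrm{Cont}(S,k)$, where $k$ is discrete and, writing $S=\lim_i S_i$ as a cofiltered limit of finite sets, $\mathrm{Cont}(S,k)=\colim_i k^{S_i}$ is a filtered colimit of finite étale $k$-algebras. For $k=\mathbf F_2$, $\mathrm{Cont}(S,\mathbf F_2)$ is the Boolean ring of \eqref{Classic Stone}. To apply Diaconescu's theorem for the coherent site on $\mathrm{ProFin}$, I would verify $F$ is \emph{flat} (preserves finite limits) and \emph{continuous} (sends covers to fpqc covers). Flatness reduces to $\mathrm{Cont}(S\times T,k)\cong \mathrm{Cont}(S,k)\otimes_k \mathrm{Cont}(T,k)$ -- clear for finite $S,T$ and then obtained by passing to a filtered colimit -- together with the analogous equalizer check. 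Continuity amounts to: for every surjection $S\twoheadrightarrow T$ in $\mathrm{ProFin}$, the induced ring map $\mathrm{Cont}(T,k)\to \mathrm{Cont}(S,k)$ is faithfully flat, being a filtered colimit of the split direct-factor inclusions $k^T\hookrightarrow k^S$ at the finite level.

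Once $F$ is flat and continuous, the universal property yields the desired adjunction $L\dashv R$, with $L$ colimit- and finite-limit-preserving and extending $F$ along Yoneda. Since $L$ preserves colimits and every condensed set is a colimit of profinite sets, fully faithfulness reduces to showing the unit $S\to RL(S)$ is an equivalence for every profinite $S$, which evaluates at a test profinite $T=\lim_j T_j$ to
$$
RL(S)(T)=\Hom_{k\text{-}\Alg}\bigl(\mathrm{Cont}(S,k),\mathrm{Cont}(T,k)\bigr)=\lim_i\colim_j \Hom_{k\text{-}\Alg}(k^{S_i},k^{T_j}),
$$
where the finite presentation of each $k^{S_i}$ allows the inner $\Hom$ to commute with the filtered colimit on the second argument. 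Because $k$ is a field, $\Spec k^{S_i}$ has precisely the $k$-points $S_i$ (its maximal ideals all have residue field $k$), so $\Hom_{k\text{-}\Alg}(k^{S_i},k^{T_j})=\Hom_{\Set}(T_j,S_i)$; reassembling gives $\lim_i\mathrm{Cont}(T,S_i)=\mathrm{Cont}(T,S)=S(T)$, as desired.

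The main obstacle is exactly this closing Hom computation, which is essentially a self-contained re-derivation of Stone duality adapted to the base field $k$. The $\mathbf F_2$-case gets it for free from the Boolean-ring equivalence \eqref{Classic Stone}; for general $k$ one must establish directly that the idempotent structure of $\mathrm{Cont}(S,k)$ pins down $S$ among $k$-algebra maps. This is precisely where the field hypothesis on $k$ is essential: over a general commutative ring, $\Spec k^S$ would carry spurious points and the unit would fail to be an isomorphism, even though the rest of the construction -- $F$, flatness, continuity, Kan extension -- would go through verbatim.
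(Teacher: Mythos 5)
Your construction of the adjunction is sound and is a reasonable alternative to the paper's route: the paper works directly from the adjunction $(-)^\circ\dashv U$ between the pearl functor and the forgetful functor on algebras, deriving a morphism of sites, whereas you run Diaconescu's theorem off the flat continuous functor $S\mapsto\Spec(\mathrm{Cont}(S,k))$. Both yield the same left-exact left adjoint $f^*$. (One caveat: $\mathrm{ProFin}$ and $\CAlg_k$ are large, so Diaconescu does not apply off-the-shelf; the paper spends Subsection 2.1 setting up $\kappa$-small cutoffs and left Kan extensions across inaccessible cardinals precisely to make this kind of argument legitimate. You would need the same scaffolding.)

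The genuine gap is in your final reduction. Full faithfulness of $f^*$ is equivalent to the unit $\eta_Y\colon Y\to f_*f^*(Y)$ being an isomorphism for \emph{every} condensed set $Y$, and you reduce this to the case $Y=S$ profinite by writing $Y=\varinjlim_j T_j$ and invoking colimit-preservation of $f^*$. But $f_*$ is a right adjoint and does not preserve the (non-filtered) colimits expressing $Y$ from representables, so $f_*f^*(\varinjlim T_j)\ne\varinjlim f_*f^*(T_j)$, and the reduction fails. Concretely, the relevant bijection is $\Hom(T,Y)\to\Hom(f^*T,f^*Y)$, and while the source variable $T$ may be taken profinite by Yoneda, there is no formal way to shrink the target $Y$. (The standard cautionary example: for the constant-sheaf functor $\Delta\colon\mathrm{Set}\to\mathrm{Shv}(\mathcal D)$, the unit is an isomorphism on the representable $*$ but not on a general set $S$ unless the topos is connected.) The paper avoids this by computing the unit directly for an arbitrary sheaf $X$ on Stone $k$-algebras: it shows that $X\circ(-)^\circ$ is a \emph{separated} presheaf on $\CAlg_k$ (using that the pearl functor preserves fpqc covers, Proposition~\ref{site preservation lemma}), so its sheafification is given by a single plus construction; and then Lemma~\ref{Pearl and products}, supplying a faithfully flat comparison $B^\circ\otimes_A B^\circ\to(B\otimes_A B)^\circ$, lets one replace the equalizer against $X((B\otimes_A B)^\circ)$ with one against $X(B^\circ\otimes_A B^\circ)$, after which the sheaf condition for $X$ collapses the colimit to $X(A)$. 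None of this Stone-algebra machinery appears in your proposal, and without it (or some substitute) the full faithfulness assertion is not established beyond the profinite case you actually check.
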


The adjoint functors of Theorem \ref{Main thm k} admit the following descriptions:
\begin{itemize}
\item (Propositions \ref{FOP for Haus}, \ref{Stone space in terms of pi_0}) The left adjoint sends a condensed set $X:\mathrm{ProFin}^\mathrm{op}\to\mathrm{Set}$ to the fpqc sheafification of the functor
$$
\CAlg_k\ni A\mapsto X(\pi_0(\left|\Spec(A)\right|))\in \mathrm{Set},
$$
where the connected components of the underlying set $\left |\Spec(A)\right|$ are equipped with the quotient (in particular, totally disconnected) topology. For compactly generated Hausdorff topological spaces $X$ this simplifies to
$$
\CAlg_k\ni A\mapsto \Hom_\mathrm{Top}(\left |\Spec(A)\right |, X)\in \mathrm{Set}.
$$
\item (Proposition \ref{How enh works}) The right adjoint functor of Theorem \ref{Main thm k} sends an fpqc sheaf $\mX :\CAlg_k\to\mathrm{Set}$ into the condensed space
$$
\mathrm{ProFin}^\mathrm{op}\ni S\mapsto \mX(\CMcal C(S, k))\in \mathrm{Set},
$$
where $\CMcal C(S, k)$ denotes the ring of continuous functions $S\to k$.
\end{itemize}

 For a compact Hausdorff space $X$, viewed as a qcqs condensed set, the corresponding fpqc sheaf $\mX:\CAlg_k\to \mathrm{Set}$ can be described in terms of sheaves of $k$-modules.  Using a recent version \cite{German} of Tannaka reconstruction, we express it (Proposition \ref{Tannaka rec}) as
$$
\mX(A)\simeq \mathrm{LFun}^{\otimes}_{k}(\mathrm{Shv}_{\Mod_k}(X), \Mod_A),
$$
which denotes the $k$-linear colimit-preserving symmetric monoidal functors.
Note that this is precisely the ``Tannakian spectrum" of the sheaf category $\mathrm{Shv}_{\Mod_k}(X)$.
When $X$ is a profinite set, this can  be further decategorified (Proposition \ref{Stone space in terms of pi_0}) to
$$
\mX(A)\simeq \Hom_{\CAlg_k}(\CMcal C(X, k), A).
$$
We furthermore show (Proposition \ref{Affinization is pi_0}) that affinization of $\mX$ in general corresponds to the appropriately-interpreted profinite completion of the condensed set $X$.

By restricting Theorem \ref{Main thm k} to the subcategories of profinite sets inside condensed sets on the left and of affine $k$-schemes on the right-hand side respectively, we recover a $k$-algebra version of Stone duality 
$$
\mathrm{ProFin}^\mathrm{op}\simeq 
\CAlg_k^\mathrm{Stone}
$$
due to \cite{Antieau}, which specializes to \eqref{Classic Stone} for $k=\mathbf F_2$. Here $\CAlg_k^\mathrm{Stone}\subseteq\CAlg_k$ denotes the full subcategory of \textit{Stone $k$-algebras}, which form the backbone of our approach to Theorem \ref{Main thm k}.
Over a separably closed field $k$, Stone $k$-algebras coincide with weakly \'etale $k$-algebras. It follows that in that setting, Theorem \ref{Main thm k}  recovers the instigating approach to condensed mathematics via the pro-\'etale site of \cite{Bhatt-Scholze}.

\subsection*{Summary of contents}

This paper is organized as follows:
\begin{itemize}
\item Section \ref{Section 1} discusses Stone $k$-algebras and establishes the algebraic prerequisites concerning them required in the proof of the main theorem.
\item Section \ref{Section 2} reviews the construction of condensed sets and fpqc sheaves from \cite{Condensed} and 
\cite{Dirac2}, then proves the main comparison theorem.
\item Section \ref{Stone duality section} investigates how topological notions correspond to algebro-geometric ones by the Stone duality result of the previous section.
\item Section \ref{Appendix} shows how the results of this paper also work in the ``light condensed'' setting of \cite{CS23}, and discusses a possible higher-categorical generalization.
\end{itemize}

\subsection*{Acknowledgments}
I am tremendously grateful to Ben Antieau, David Ben-Zvi, Anish Chedalavada, David Gepner, Peter Haine, Deven Manam, Arpon Raksit, and Alberto San Miguel Malaney for useful conversations on the topic of this paper. I am also grateful to Emily Riehl for organizing a seminar on condensed mathematics at Johns Hopkins in the Fall of 2023, which indirectly inspired this work.

\section{Preliminaries on Stone algebras}\label{Section 1}

In this paper, all rings and algebras are commutative and unital. Let $\CAlg_k$  denote the category of (always implicitly small) $k$-algebras. In this section, we will primarily be occupied with its 
 following subcategory, introduced in \cite[Definition 3.2]{Antieau}.

\begin{definition}\label{Def of Stone}
Fix a base ring $k$.
The \textit{category of Stone $k$-algebras} $\mathrm{CAlg}_k^\mathrm{Stone}\subseteq \CAlg_k$ is the full subcategory of commutative $k$-algebras, generated under filtered colimits by the  $k$-algebras $k^S = \bigoplus_S k$ for finite sets $S$.
\end{definition}

We establish a form of Stone duality for Stone algebras in Subsection \ref{Subsection 1.1}. In Subsection \ref{Subsection 1.2} we define the \textit{pearl} of a $k$-algebra, the maximal Stone subalgebra, and discuss its algebraic property. Then we connect it to connected components of spectral spaces in Subsection \ref{Subsection 1.3}. Subsection \ref{Subsection 1.4} shows that Stone duality is an equivalence of sites. Finally, we identify the class of Stone $k$-algebras with weakly \'etale $k$-algebras when $k$ is separably closed, and with $p$-Boolean algebras when $k=\mathbf F_p$ in Subsection \ref{Subsection 1.5}.

\subsection{Stone duality}\label{Subsection 1.1}

The \textit{raison d'\'etre} of Stone $k$-algebras is that they participate in a duality with profinite sets. When $k=\mathbf F_2$, this recovers the usual notion of Stone duality, as originated in \cite{Stone on duality}, upon recognizing Stone $\mathbf F_2$-algebras as Boolean rings and identifying those with Boolean algebras as in \cite{Stone on rings}. The following version of Stone duality is proved in {\cite[Lemma 3.5]{Antieau}}, but since the precise functors giving rise to this equivalence are so important to this paper, we provide a detailed proof as well.

\begin{theorem}[Stone duality]\label{Stone theorem}
Let $k$ be a ring whose only idempotent elements are $0, 1\in k$.
There is a canonical equivalence of categories
$$
\CAlg_k^\mathrm{Stone}\simeq \mathrm{ProFin}^\mathrm{op},
$$
given for any Stone $k$-algebra $A$ and profinite set $S$ by
$$
A\mapsto \Hom_{\CAlg_k^\mathrm{Stone}}(A, k),\qquad\qquad S\mapsto \Hom_{\mathrm{Cond}}(S, k) \simeq \CMcal C(S, k).
$$
The Hom sets here inherit the respective profinite and $k$-algebra structure from that of $k$.
\end{theorem}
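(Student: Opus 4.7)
The plan is to prove the theorem via Ind/Pro-completion, reducing to the equivalence between finite sets and algebras of the form $k^S$ for $S$ finite. The category $\CAlg_k^\mathrm{Stone}$ is by definition the smallest full subcategory of $\CAlg_k$ containing the $k^S$ and closed under filtered colimits, while the classical description $\mathrm{ProFin} \simeq \Pro(\mathrm{Fin})$ exhibits profinite sets as cofiltered limits of finite sets. Thus once one establishes a contravariant equivalence between finite sets and $\{k^S : S \in \mathrm{Fin}\} \subset \CAlg_k$, the full theorem will follow by Ind-completion.

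First I would pin down the finite-level equivalence. A $k$-algebra homomorphism $\varphi\colon k^T \to k^S$ is determined by $\varphi(e_t)$ for $t \in T$, where $\{e_t\}$ is the standard orthogonal idempotent basis of $k^T$. These images must form an orthogonal decomposition $1_{k^S} = \sum_{t \in T} \varphi(e_t)$. Under the hypothesis that $k$ has only trivial idempotents, every idempotent of $k^S$ is the characteristic function of a subset of $S$. Orthogonal decompositions of unity in $k^S$ indexed by $T$ therefore correspond bijectively to partitions of $S$ with blocks labelled by $T$, i.e.\ to functions $S \to T$. This yields the natural bijection
$$
\Hom_{\CAlg_k}(k^T, k^S) \;\simeq\; \Hom_\mathrm{Fin}(S, T),
$$
which is manifestly functorial.

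Next I would upgrade the correspondence via Ind-completion. Each $k^S$ is finitely presented over $k$ (with presentation $k[e_s : s \in S]/(e_s e_{s'} - \delta_{s s'} e_s,\; 1 - \sum_s e_s)$), hence compact in $\CAlg_k$; since $\CAlg_k^\mathrm{Stone}$ inherits its filtered colimits from $\CAlg_k$ by definition, the $k^S$ remain compact there. Combined with the defining property that the $k^S$ generate $\CAlg_k^\mathrm{Stone}$ under filtered colimits, this identifies
$$
\CAlg_k^\mathrm{Stone} \;\simeq\; \Ind\bigl(\{k^S\}\bigr) \;\simeq\; \Ind(\mathrm{Fin}^\mathrm{op}) \;\simeq\; \Pro(\mathrm{Fin})^\mathrm{op} \;\simeq\; \mathrm{ProFin}^\mathrm{op}.
$$
The explicit functors then fall out. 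Writing $A = \colim_i k^{S_i}$ with $S_i$ finite, compactness of $k = k^{\{\ast\}}$ gives $\Hom_{\CAlg_k}(A, k) = \lim_i S_i$, recovering the profinite set. Conversely, for $S = \lim_i S_i$ profinite, continuity forces every $f\colon S \to k_\mathrm{disc}$ to factor through some finite quotient, yielding $\mathcal C(S, k) = \colim_i k^{S_i}$, a Stone $k$-algebra; the identification $\Hom_\mathrm{Cond}(S, k) \simeq \mathcal C(S, k)$ is built into the condensed structure of the discrete ring $k$.

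The main obstacle is the idempotent calculation underpinning the finite case: the hypothesis on $k$ is used precisely here, and without it the functor $S \mapsto k^S$ would acquire extra endomorphisms from nontrivial idempotents and fail to be fully faithful. Once this is in hand, the rest of the argument is categorical machinery (Ind/Pro-duality) combined with the standard topological fact that continuous maps from a profinite set to a discrete target factor through a finite quotient.
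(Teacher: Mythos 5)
Your proof is correct and follows the same high-level strategy as the paper's --- reduce to a finite-level anti-equivalence between $\mathrm{Fin}$ and the $k$-algebras $k^S$, then pass to $\mathrm{Ind}/\mathrm{Pro}$-completions using compactness --- but the finite step differs in a way worth noting. The paper does not compute $\Hom_{\CAlg_k}(k^T, k^S)$ in general: it instead uses the double-(co)limit Hom formula for compactly generated subcategories to reduce fully faithfulness to the single claim that $S \to \Hom_{\CAlg_k}(k^S, k)$ is a bijection, which it then verifies by rewriting $\{0,1,\ldots,n\}$ as an iterated fiber product of two-element sets, converting to tensor products of $k$-algebras, and identifying $\Hom_{\CRing}(\mathbf Z[t]/(t^2-1), k)$ with $\mathrm{Idem}(k)$. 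Your route computes $\Hom_{\CAlg_k}(k^T, k^S)$ directly by classifying $T$-indexed orthogonal idempotent decompositions of $1 \in k^S$ as $T$-labelled partitions of $S$; this is the more transparent argument, since it makes visible at a glance exactly where the trivial-idempotents hypothesis on $k$ enters and avoids the paper's somewhat opaque bookkeeping with iterated fiber products. Both arguments are valid, and your subsequent passage through $\mathrm{Ind}(\mathrm{Fin}^\mathrm{op}) \simeq \mathrm{Pro}(\mathrm{Fin})^\mathrm{op} \simeq \mathrm{ProFin}^\mathrm{op}$ is the formal counterpart of the paper's explicit extension of $S \mapsto k^S$ to $\mathrm{ProFin}^\mathrm{op}$ and its fully faithfulness computation; the two are equivalent, with yours trading the explicit Hom manipulations for the standard $\mathrm{Ind}$-completion universal property (which requires, as you note, compactness of the $k^S$ and their generation of $\CAlg_k^\mathrm{Stone}$ under filtered colimits).
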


\begin{proof}

The full subcategory $\CAlg_k^\mathrm{min}\subseteq \CAlg_k$, spanned by the  $k$-algebras $k^S$ for $S\in\mathrm{Fin}$, consists of compact objects and is closed under finite colimits and retracts. It follows that $\CAlg_k^\mathrm{Stone}$ is presentable and compactly generated by the objects of $\CAlg_k^\mathrm{min}$. The functor $\mathrm{Fin}^\mathrm{op}\to \CAlg_k^\mathrm{min}$, given by $S\mapsto k^S$, extends to a functor $\mathrm{ProFin}^\mathrm{op}\to \CAlg_k^\mathrm{Stone}$,  given by
$$
S=\varprojlim_i S_i\mapsto \varinjlim_i k^{S_i} \simeq \CMcal C (S, k)
$$
by the standard identification of the continuous functions on a profinite set. Since both the discrete set $k$ and the profinite set $S$ belong to the class of compactly generated weak Hausdorff spaces, which embed fully faithfully into condensed sets, we may equivalently replace the continuous maps $\CMcal C$ above with condensed-set maps $\Hom_\mathrm{Cond}$.

The so-obtained functor $\mathrm{ProFin}^\mathrm{op}\to \CAlg_k^\mathrm{Stone}$ is essentially surjective by construction. We wish to show it is fully faithful as well. By the definition of Stone $k$-algebras, their Hom spaces can be described as
\begin{eqnarray*}
\Hom_{\CAlg_k^\mathrm{Stone}}(A, B) &=& 
\Hom_{\CAlg_k}(\varinjlim_i k^{S_i}, \varinjlim_j k^{S_j})\\
&=& 
\varprojlim_i\varinjlim_j\Hom_{\CAlg_k}( k^{S_i}, k^{S_j})\\
&=& 
\varprojlim_i\varinjlim_j\prod_{S_j}\Hom_{\CAlg_k}( k^{S_i}, k),
\end{eqnarray*}
in which the second equality makes use of the $k$-algebra $k^{S_i}$ being finitely generated and as such a compact object in the category $\CAlg_k$. 
It follows that the functor in question being fully faithful boils down to the canonical map $S\to \Hom_{\CAlg_k}(k^S, k)$ being a bijection for any finite set $S$. To verify that, assume without loss of generality that $S=\{0, 1, \ldots, n\}$. If $n\ge 2$, we can write this as
$$
S=\{0, 1\}\times_{\{1\}}\cdots\times_{\{n-1\}}\{n-1, n\}\simeq \{0, 1\}^{\times n-2}.
$$
The functor $S\mapsto k^S$ takes products of finite sets to coproducts of $k$-algebras i.e.\ tensor products, so
\begin{eqnarray*}
\Hom_{\CAlg_k}(k^S, k) &=& \Hom_{\CAlg_k}((k^{\{0, 1\}})^{\otimes n-2}, k)\\
&=& \Hom_{\CAlg_k}(k^{\{0, 1\}}, k)^{\times n-2}\\
&=& \Hom_{\CRing}(\mathbf Z[t]/(t^2-1), k)^{\times n-2}\\
&=& \mathrm{Idem}(k)^{\times n-2},
\end{eqnarray*}
where we have used the identification of rings $\mathbf Z^{\{0, 1\}} = \mathbf Z\oplus\mathbf Z\simeq \mathbf Z[t]/(t^2-1)$ and maps out of it into a ring $k$ with idempotents in $k$. Since $\mathrm{Idem}(k)=\{0, 1\}$ by assumption, we find that
$$
\Hom_{\CAlg_k}(k^S, k) = \{0, 1\}^{\times n-2} = S
$$
as desired, proving that the functor in question $\mathrm{ProFin}^\mathrm{op}\to \CAlg_k^\mathrm{Stone}$ is fully faithful and therefore an equivalence of categories.
It follows from the above proof of full faithfulness that an inverse functor $\CAlg_k^\mathrm{Stone}\to\mathrm{ProFin}^\mathrm{op}$ can be given by $A\mapsto \Hom_{\CAlg_k}(A, k)$. 
\end{proof}

\begin{remark}
The assumption that $0, 1\in k$ are the only idempotent elements of the ring $k$ is equivalent to requiring that the underlying topological space $\left |\Spec(k)\right |$ of its spectrum is connected. For this reason, such a ring is sometimes called a \textit{connected ring}, especially in commutative algebra. This class contains all fields, as well as all local rings and integral domains, including the universal base $\mathbf Z$.
\end{remark}

\subsection{The pearl of an algebra}\label{Subsection 1.2}

Recall that a ring is called \textit{absolutely flat} (also known as \textit{von Neumann regular}, especially in non-commutative contexts) if every module over it is flat.
The corresponding relative notion is that of a \textit{weakly \'etale algebra} in the sense of \cite[\href{https://stacks.math.columbia.edu/tag/092B}{Definition 092B}]{stacks-project}. An $A$-module is flat if and only if it is flat as a $k$-module for any weakly \'etale $k$-algebra $A$. 

\begin{prop}\label{Is wetal}
Any Stone $k$-algebra is a weakly \'etale $k$-algebra.
\end{prop}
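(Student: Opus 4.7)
My plan is to exhibit every Stone $k$-algebra as a filtered colimit of étale $k$-algebras, and then invoke the stability of the class of weakly étale algebras under filtered colimits. The ``ind-étale implies weakly étale'' principle is standard (see \cite[\href{https://stacks.math.columbia.edu/tag/097R}{Tag 097R}]{stacks-project} or equivalent), so the heart of the argument is just checking the generating case.

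First I would observe that for any finite set $S$, the $k$-algebra $k^S = \prod_S k$ is finite étale over $k$: as a scheme it is the disjoint union of $|S|$ copies of $\Spec(k)$, and being étale is a local condition, so $\Spec(k^S)\to\Spec(k)$ is étale. Since étale morphisms are in particular weakly étale (flat with flat diagonal), each $k^S$ satisfies the desired property.

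Next, I would appeal to the fact that weakly étale algebras are closed under filtered colimits. This can be seen directly: if $A=\varinjlim_i A_i$ in $\CAlg_k$ with each $k\to A_i$ weakly étale, then $k\to A$ is flat since filtered colimits preserve flatness of modules, and the multiplication map $A\otimes_k A\to A$ identifies with the filtered colimit $\varinjlim_i(A_i\otimes_k A_i\to A_i)$ of flat morphisms, hence is itself flat. By Definition \ref{Def of Stone}, every Stone $k$-algebra arises as such a filtered colimit of $k^{S_i}$'s with $S_i$ finite, so combining the two points finishes the proof.

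There is no real obstacle here; the only mild subtlety is checking that the mapping $A\otimes_k A\to A$ of a filtered colimit agrees with the filtered colimit of the individual multiplication maps, but this is a formal consequence of $(-)\otimes_k(-)$ commuting with filtered colimits in each variable together with the filteredness of the indexing diagram restricting to the diagonal.
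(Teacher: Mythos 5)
Your proposal is correct and follows essentially the same route as the paper: observe that each $k^S$ is finite \'etale (hence weakly \'etale), and that the class of weakly \'etale $k$-algebras is closed under filtered colimits, which by Definition \ref{Def of Stone} settles the claim. The paper cites \cite[\href{https://stacks.math.columbia.edu/tag/092N}{Lemma 092N}]{stacks-project} for the closure-under-filtered-colimits step, while you sketch the argument directly (and also mention the equivalent ind-\'etale route), but the substance is the same.
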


\begin{proof}
Stone $k$-algebras are by definition filtered colimits of the finite \'etale $k$-algebras $k^S$ for finite sets $S$, so it follows from \cite[\href{https://stacks.math.columbia.edu/tag/092N}{Lemma 092N}]{stacks-project} that all Stone $k$-algebras are weakly \'etale $k$-algebras in the sense of \cite[\href{https://stacks.math.columbia.edu/tag/092B}{Definition 092B}]{stacks-project}.
\end{proof}

\begin{prop}\label{Stone implies abs flat}
If $k$ is absolutely flat, then so is any Stone $k$-algebra $A$.
\end{prop}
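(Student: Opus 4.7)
The plan is to reduce to a two-step verification using the definition of Stone $k$-algebras as filtered colimits of the finite products $k^S$. I will use the elementwise characterization that a ring $R$ is absolutely flat if and only if for every $a \in R$ there exists $b \in R$ with $a = a^2 b$ (equivalently, every principal ideal is generated by an idempotent).

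First I would check the claim for the generating objects. If $k$ is absolutely flat and $S$ is a finite set, then $k^S$ is a finite product of copies of $k$, and absolute flatness is preserved under finite products: given $(a_s)_{s\in S}\in k^S$, pick $b_s\in k$ with $a_s = a_s^2 b_s$ for each $s$, and assemble them into $(b_s)\in k^S$. So every $k^S$ is absolutely flat.

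Next I would argue that the class of absolutely flat rings is closed under filtered colimits in $\CAlg_k$. Write $A = \varinjlim_i A_i$ with each $A_i$ absolutely flat, and let $\varphi_i : A_i \to A$ be the structure maps. Given $a \in A$, lift it (possibly after passing to a larger index) to some $a_i \in A_i$; choose $b_i \in A_i$ with $a_i = a_i^2 b_i$; then $b := \varphi_i(b_i)$ satisfies $a = a^2 b$. This verifies the elementwise criterion for $A$.

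Combining the two steps with Definition \ref{Def of Stone}, which presents any Stone $k$-algebra as a filtered colimit of algebras of the form $k^S$ for finite $S$, yields the conclusion. There is essentially no obstacle here; the only subtlety is to invoke the correct pointwise criterion for absolute flatness so that it passes through filtered colimits cleanly, rather than trying to argue via the more global characterization ``every module is flat'', which does not obviously interact well with filtered colimits of rings.
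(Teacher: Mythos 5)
Your proof is correct, and it takes a genuinely different route from the paper. The paper deduces the result in two lines from Proposition~\ref{Is wetal} (Stone $\Rightarrow$ weakly \'etale) together with the Stacks project lemma that a weakly \'etale algebra over an absolutely flat ring is absolutely flat. You instead argue from first principles, using the elementwise (von Neumann regular) characterization of absolute flatness: for every $a$ there exists $b$ with $a = a^2 b$. You verify this for the generating algebras $k^S$ componentwise, and then show the condition passes through filtered colimits by lifting elements and pushing witnesses forward. Both proofs are valid; the paper's is shorter given the groundwork already laid and connects to the weakly \'etale machinery used elsewhere in the paper (in particular, the identification with the pro-\'etale site over separably closed fields), while yours is more elementary and self-contained, requiring no external input beyond the equivalence of ``absolutely flat'' with the pointwise criterion. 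Your remark at the end correctly identifies why the pointwise criterion is the right tool: the ``every module is flat'' formulation does not interact transparently with colimits of rings.
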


\begin{proof}
This follows from the preceding Proposition
because weakly \'etale $k$-algebras over an absolutely flat ring $k$ are absolutely flat
by \cite[\href{https://stacks.math.columbia.edu/tag/092I}{Lemma 092I}]{stacks-project}.
\end{proof}

\begin{prop}\label{Pearl exists}
The forgetful functor $U :\CAlg_k^\mathrm{Stone}\to \CAlg_k$ admits a right adjoint.
\end{prop}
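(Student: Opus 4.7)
\textbf{The plan is} to apply the presentable adjoint functor theorem. The proof of Theorem~\ref{Stone theorem} shows that $\CAlg_k^{\mathrm{Stone}}$ is presentable, compactly generated by the objects $k^S$ for $S\in\mathrm{Fin}$; since $\CAlg_k$ is also presentable, it is enough to verify that $U$ preserves all small colimits. By the usual reduction, this amounts to preservation of filtered colimits, finite coproducts, and coequalizers. Filtered colimits are preserved by the very definition of $\CAlg_k^{\mathrm{Stone}}$; finite coproducts follow from the computation $k^S\otimes_k k^T\cong k^{S\times T}$ on generators, extended to arbitrary Stone algebras using that $\otimes_k$ commutes with filtered colimits.

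The preservation of coequalizers is the main obstacle. Given $f,g\colon A\rightrightarrows B$ in $\CAlg_k^{\mathrm{Stone}}$, Stone duality converts the pair into continuous maps $f^{*},g^{*}\colon S_B\rightrightarrows S_A$ of profinite sets with closed equalizer $E\subseteq S_B$. The coequalizer in $\CAlg_k^{\mathrm{Stone}}$ is then $\mathscr{C}(E,k)\simeq B/\mathcal{I}_E$, where $\mathcal{I}_E=\{b\in B : b|_E=0\}$, while the coequalizer in $\CAlg_k$ is $B/I$ for $I=(f(a)-g(a))_{a\in A}$. The containment $I\subseteq\mathcal{I}_E$ is automatic; the key is to establish the reverse.

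To prove $\mathcal{I}_E\subseteq I$, I would write $A=\varinjlim_i k^{S_i}$, yielding induced continuous maps $f_i^{*},g_i^{*}\colon S_B\to S_i$ with finite-level equalizers $E_i=\{s : f_i^{*}(s)=g_i^{*}(s)\}\supseteq E$. Since $k$ is discrete, $\mathcal{I}_E$ is spanned as a $k$-module by characteristic functions $\chi_U$ of clopens $U\subseteq S_B\setminus E$. Compactness of $U$ forces $U\cap E_j=\emptyset$ for some single index $j$, and the identity $f_j(e_s)(f_j(e_t)-g_j(e_t))=-f_j(e_s)g_j(e_t)$ valid for $s\neq t\in S_j$ places each idempotent $\chi_{(f_j^{*})^{-1}(s)\cap(g_j^{*})^{-1}(t)}=f_j(e_s)g_j(e_t)$ into $I$. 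Decomposing $\chi_U=\sum_{s\neq t\in S_j}\chi_U\cdot f_j(e_s)g_j(e_t)$ then yields $\chi_U\in I$. Once colimit preservation is in hand, the adjoint functor theorem supplies the right adjoint, which will be the ``pearl'' functor of the next subsection.
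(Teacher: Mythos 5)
Your proof follows the same overall strategy as the paper: invoke the (presentable) adjoint functor theorem, using that $\CAlg_k^{\mathrm{Stone}}$ is presentable (established in the proof of Theorem~\ref{Stone theorem}), and reduce preservation of colimits to filtered colimits, finite coproducts, and coequalizers — with the first two dispatched exactly as in the paper. Where you diverge is the coequalizer case. The paper treats finite colimits in one stroke, observing that they are relative tensor products in $\CAlg_k$ and that everything ``boils down to'' tensor products commuting with filtered colimits together with $k^S\otimes_kk^{S'}=k^{S\times S'}$; you instead transport the pair $f,g$ through Stone duality to continuous maps of profinite sets, identify the Stone-side coequalizer as $\mathscr C(E,k)=B/\mathcal I_E$, and give a hands-on verification that $\mathcal I_E\subseteq I$ via a compactness argument plus the orthogonal-idempotent decomposition $\chi_U=\sum_{s\neq t}\chi_U\,f_j(e_s)g_j(e_t)$. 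Your treatment is considerably more detailed and makes the coequalizer case genuinely explicit, which is welcome given the brevity of the paper's remark; it checks out as written.

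One caveat worth noting: because your coequalizer argument routes through Stone duality (Theorem~\ref{Stone theorem}) and the identification of $B$ with $\mathscr C(S_B,k)$, it implicitly requires $k$ to have no non-trivial idempotents (a ``connected'' ring), whereas Proposition~\ref{Pearl exists} is stated for an arbitrary base ring $k$ and the paper's sketch does not invoke Stone duality. In the paper's actual uses (always over a field) this makes no difference, but strictly speaking your proof establishes the result only under the hypotheses of Theorem~\ref{Stone theorem}, so you should either state that restriction or supply a duality-free argument for the coequalizer step to match the stated generality.
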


\begin{proof}
According to the first isomorphism theorem, it suffices to note that $\CAlg_k^\mathrm{Stone}$ is presentable (as we saw in the proof of Theorem \ref{Stone theorem}), and that the forgetful functor $U$ respects colimits. Since any colimit can obtained from finite and filtered colimits, it suffices to show that the subcategory of Stone $k$-algebras is closed under both of those types of colimits. For filtered colimits, this follows directly from the definition. For finite colimits, which are computed in $\CAlg_k$ as relative tensor products, this ultimately boils down to tensor products commuting with colimits and the observation that $k^S\otimes_k k^{S'}=k^{S\times S'}$ holds for all finite sets $S, S'$.
\end{proof}

\begin{definition}
The \textit{pearl of a $k$-algebra $A$}, denoted $A^\circ$, is its image under the right adjoint functor $\CAlg_k\to\CAlg_k^\mathrm{Stone}$ of Proposition \ref{Pearl exists}. That is to say,  $A^\circ\to A$ is terminal among $k$-algebra homomorphisms from Stone $k$-algebras into $A$.
\end{definition}

We wish to show that the pearl $A^\circ$ may be viewed as a subring of $A$. That will require a well-known basic result about absolutely flat rings.

\begin{lemma}\label{ideals in abs flat}
Any finitely generated ideal $I$ in an absolutely flat ring $R$ is of the form $I=(e)$ for some idempotent element $e\in R$.
\end{lemma}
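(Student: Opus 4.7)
My plan is to reduce to the principal case and then handle a single generator using flatness. The statement for general finitely generated ideals will then follow by induction, using a direct formula for combining two idempotent generators.

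First, I would show: if $a \in R$, then $(a) = (e)$ for some idempotent $e$. The crucial observation is that since $R$ is absolutely flat, the quotient module $R/(a)$ is flat. Tensoring the short exact sequence $0 \to (a) \to R \to R/(a) \to 0$ with the flat module $R/(a)$ preserves exactness, so $(a) \otimes_R R/(a) \hookrightarrow R/(a)$. But the composite $(a) \hookrightarrow R \twoheadrightarrow R/(a)$ is zero, so $(a) \otimes_R R/(a) = (a)/(a)^2$ vanishes, giving $(a) = (a)^2$. In particular, there exists $r \in R$ with $a = ra^2$. Setting $e := ra$, one checks $e^2 = r^2 a^2 = r(ra^2) = ra = e$, and the identities $e = ra \in (a)$ together with $a = ra \cdot a = ea \in (e)$ yield $(a) = (e)$.

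Next, I would prove the two-generator case: if $e, f \in R$ are idempotents, then $(e, f) = (g)$ for the idempotent $g := e + f - ef$. Both $e$ and $f$ lie in $(g)$ because $eg = e^2 + ef - e^2 f = e$ and similarly $fg = f$; conversely $g \in (e, f)$ is immediate; and a direct expansion gives $g^2 = g$. Combining this with the principal case, an easy induction on the number of generators shows that every finitely generated ideal $I = (a_1, \dots, a_n)$ is generated by a single idempotent.

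I do not anticipate a real obstacle here: the only nontrivial input is the extraction of the identity $(a) = (a)^2$ from absolute flatness, and the rest is a pair of elementary algebraic manipulations. The care point is merely to phrase the flatness argument correctly, ensuring that the vanishing of $(a) \otimes_R R/(a)$ is used rather than the (a priori weaker) vanishing of its image in $R/(a)$, which is why the injectivity supplied by flatness of $R/(a)$ matters.
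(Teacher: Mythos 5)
Your proof is correct and follows essentially the same route as the paper: derive $(a)=(a)^2$ from absolute flatness, extract an idempotent generator from $a=ra^2$ by setting $e=ra$, and then reduce the general finitely generated case to two generators via the formula $g=e+f-ef$. The one place you are slightly more careful is the flatness step: you explicitly tensor the short exact sequence with the flat module $R/(a)$ and invoke the resulting injectivity to conclude $(a)\otimes_R R/(a)=0$, whereas the paper phrases this step more loosely as ``tensoring with $I$''; your formulation is the precise version of what is needed.
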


\begin{proof}
Consider the short exact sequence of $R$-modules
$$0\to I\to R\to R/I\to 0.$$
By tensoring with $I$, and using the fact that it is a flat $R$-module, we find that $I^2=I$. Thus any ideal in an absolutely flat ring is idempotent.
If $I$ is principal, so that $I=(a)$ for some $a\in R$, this means that $a=a^2b$ holds for some $b\in R$. The element $e=ab$ then generates the same ideal $I$ and satisfies $e^2=(a^2b)b=e$, hence it is an idempotent generator.
It remains to show that any finitely generated ideal is principal. By induction, it suffices to restrict to the case of two generators $I=(a, b)$.
Each of the principal ideals $(a)$ and $(b)$ can be generated by idempotents, so it suffices to assume that $a, b\in R$ are idempotent elements. Now consider the element 
$e=a+b-ab\in I$. We have $ae=a^2+ab-a^2b = a$ and likewise $be=b$, thus $I=(e)$ as desired.
\end{proof}

\begin{prop}\label{Pearl as a subring}
Let $k$ be absolutely flat and $A$ a $k$-algebra. The universal map from the pearl $A^\circ \to A$ is injective.
\end{prop}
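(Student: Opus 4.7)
The plan is to show that the kernel $I := \ker(A^\circ \to A)$ of the counit map vanishes. The starting point is the observation that $A^\circ$ is absolutely flat: by Proposition \ref{Stone implies abs flat}, any Stone $k$-algebra over an absolutely flat $k$ is absolutely flat. Given any element $x \in I$, Lemma \ref{ideals in abs flat} produces an idempotent $e \in A^\circ$ with $(e) = (x)$. Since $(e) \subseteq I$, proving $e = 0$ will force $x = 0$, so it suffices to show that no nonzero idempotent of $A^\circ$ maps to zero in $A$.

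Fix such an idempotent $e \in I$. The counit $\epsilon_A : A^\circ \to A$ factors through the quotient projection as $A^\circ \xrightarrow{\pi} A^\circ/(e) \xrightarrow{f} A$. The crucial intermediate step is to recognize $A^\circ/(e)$ as itself a Stone $k$-algebra. I would argue this by exhibiting the quotient as a pushout in $\CAlg_k$,
\[
A^\circ/(e) \;\cong\; k \otimes_{k^{\{0,1\}}} A^\circ,
\]
in which $k^{\{0,1\}} \to A^\circ$ classifies the idempotent $e$ and $k^{\{0,1\}} \to k$ is the appropriate factor projection. All three algebras in the pushout diagram are Stone, and Stone $k$-algebras are closed under finite colimits in $\CAlg_k$ (as already used in the proof of Proposition \ref{Pearl exists}), so $A^\circ/(e)$ is Stone.

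With this in hand, I would invoke the universal property of the pearl, applied to the map $f : A^\circ/(e) \to A$, to obtain a unique lift $\tilde f : A^\circ/(e) \to A^\circ$ satisfying $\epsilon_A \circ \tilde f = f$. The composite $\tilde f \circ \pi : A^\circ \to A^\circ$ then satisfies $\epsilon_A \circ (\tilde f \circ \pi) = f \circ \pi = \epsilon_A$, and the identity on $A^\circ$ evidently satisfies the same equation. By the uniqueness of such lifts (the universal property applied to the Stone algebra $B = A^\circ$), the two must agree, giving $\tilde f \circ \pi = \mathrm{id}_{A^\circ}$. This makes the surjection $\pi$ a split monomorphism, hence an isomorphism, whence $(e) = \ker \pi = 0$ and $x = 0$.

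I expect the main obstacle to be the step of verifying that $A^\circ/(e)$ remains a Stone $k$-algebra; once that closure under idempotent quotients is established, the rest is a formal unit-counit argument. An alternative route, valid when $k$ itself has only trivial idempotents, would use Stone duality (Theorem \ref{Stone theorem}) to translate the claim into the statement that clopen subsets of profinite sets are profinite, but the pushout description above is cleaner and avoids imposing an auxiliary hypothesis on the idempotents of $k$.
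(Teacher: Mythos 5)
Your proof is correct and takes a genuinely different route from the paper's. The paper reduces the claim to closure of Stone $k$-algebras under arbitrary quotients: it first passes from an arbitrary ideal to finitely generated subideals by writing the quotient as a filtered colimit, then invokes Lemma~\ref{ideals in abs flat} to reduce to idempotent-generated principal ideals, and finally handles $A^\circ/(e)$ via the splitting $A^\circ = eA^\circ\oplus(1-e)A^\circ$ together with an argument that Stone $k$-algebras are closed under passage to $k$-subalgebras. Your version skips the filtered-colimit reduction entirely by working elementwise in $\ker(\epsilon_A)$ --- each element generates a principal ideal to which Lemma~\ref{ideals in abs flat} applies directly --- and, more substantively, it replaces the subalgebra step with the pushout identification $A^\circ/(e)\simeq k\otimes_{k^{\{0,1\}}}A^\circ$. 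That Stone $k$-algebras are closed under finite colimits in $\CAlg_k$ is already established in the proof of Proposition~\ref{Pearl exists}, so your key step (idempotent quotients of Stone algebras are Stone) requires nothing new, whereas the paper's subalgebra closure needs its own filtered-subcategory argument. Both proofs finish with the same formal unit--counit move: the lift $\tilde f$ furnished by the universal property of the pearl forces $\tilde f\circ\pi=\mathrm{id}_{A^\circ}$, so the quotient projection $\pi$ is a split monomorphism and hence an isomorphism. The pushout observation is the genuinely new idea, and it yields a tighter argument under the same hypothesis on $k$.
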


\begin{proof}
It suffices to show that if $f:B\to A$ is a $k$-algebra map from a Stone $k$-algebra $B$, then the $k$-algebra $\mathrm{im}(f)$ is Stone as well. By the first isomorphism theorem, it further suffices to show that Stone $k$-algebras are closed under quotients.  For any ideal in a ring $I\subseteq R$, we may write the quotient as a filtered colimit $R/I\simeq \varinjlim_J R/J$ indexed on finitely generated subideals $J\subseteq I$. Since Stone $k$-algebras are closed under filtered colimits, it is therefore enough to show that they are closed under forming quotients by finitely generated ideals. By Proposition \ref{Stone implies abs flat} and Lemma \ref{ideals in abs flat}, this further reduces to considering the case of idempotent-generated principal ideals.
Recall that an idempotent  element $e\in A$ induces an $A$-linear splitting $A = eA\oplus (1-e)A$. It follows that $A/e\simeq (1-e)A$, and so we are left with showing that a $k$-subalgebra $B\subseteq A$ of a Stone $k$-algebra $A$ is Stone as well. Indeed, if we write a Stone $k$-algebra as a filtered colimit
$$
A=\varinjlim_{i\,\in\, \mathcal I} k^{S_i}
$$
for finite sets $S_i$, then the $k$-subalgebra $B$ may be written as
$$
B=\varinjlim_{i\,\in \,\mathcal I_B} k^{S_i},
$$
where $\mathcal I_B\subseteq\mathcal I$ is the full subcategory of all $i\in \mathcal I$ for which the image of the corresponding map $k^{S_i}\to A$ factors through the inclusion $B\subseteq A$. Since we have for all  indices $i, j$
$$
k^{S_i}\otimes_k k^{S_j} = k^{S_i\times S_j},
$$
 the index category $\mathcal I_B\subseteq \mathcal I$ is a filtered subcategory, thus exhibiting $B$ as a Stone $k$-algebra as desired.
\end{proof}

A consequence of the preceding results is the following alternative description of the Stone duality functor. The need for it to hold is perhaps the primary reason why we have to work over a base field (as opposed to say a connected base ring) throughout this paper.

\begin{prop}\label{Stone as underspace}
If $k$ is a field, the Stone duality functor $\CAlg_k^\mathrm{Stone}\simeq \mathrm{ProFin}^\mathrm{op}$ from Theorem \ref{Stone theorem}
is given by
$$
A\mapsto \left |\Spec(A)\right |,
$$
the underlying topological space of the spectrum of the Stone $k$-algebra $A$.
\end{prop}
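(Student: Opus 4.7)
My plan is to construct a natural morphism
\[
\phi_A : \Hom_{\CAlg_k}(A, k) \to |\Spec(A)|, \qquad f \mapsto \ker(f),
\]
of profinite sets and show it is an isomorphism. By Theorem \ref{Stone theorem} the left-hand side realizes the Stone-duality equivalence, so this will identify that equivalence with $A \mapsto |\Spec(A)|$, as claimed. Well-definedness is immediate: since $k$ is a field, each $\ker(f)$ is a maximal, in particular prime, ideal of $A$; naturality in $A$ is clear.

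The key structural observation I would exploit is that both functors $\CAlg_k^\mathrm{Stone} \to \mathrm{ProFin}^\mathrm{op}$ send filtered colimits in $A$ to cofiltered limits in $\mathrm{ProFin}$. For the Hom functor this is exactly the computation carried out inside the proof of Theorem \ref{Stone theorem}, using that $k$ is a finitely generated $k$-algebra. For the underlying topological space functor, I would invoke the classical fact that $\Spec$ turns filtered colimits of rings into cofiltered limits of affine schemes, and that $|\cdot|$ preserves such cofiltered limits along affine transition maps. By Definition \ref{Def of Stone}, it then suffices to verify that $\phi_A$ is a natural isomorphism on the generators $A = k^S$ for $S$ a finite set.

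On such $A$, both sides canonically identify with the finite discrete set $S$: the identification $\Hom_{\CAlg_k}(k^S, k) \simeq S$ was established inside the proof of Theorem \ref{Stone theorem} (using that $k$, being a field, has only trivial idempotents), while $|\Spec(k^S)| \simeq \bigsqcup_{s \in S}|\Spec(k)| = S$ is the standard computation of the spectrum of a finite product of copies of $k$. Under both identifications, a point $s \in S$ corresponds to the projection $\mathrm{pr}_s : k^S \to k$ on one side and to its kernel on the other, so $\phi_{k^S}$ is the identity of $S$.

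The main obstacle I anticipate is purely topological bookkeeping: promoting the bijection $\phi_A$ to a homeomorphism. For $A = \varinjlim_i k^{S_i}$ a Stone $k$-algebra, both the profinite structure on $\Hom_{\CAlg_k}(A, k)$ (from Theorem \ref{Stone theorem}) and the Zariski topology on $|\Spec(A)|$ are presented by the preceding paragraphs as $\varprojlim_i S_i$, and $\phi_A$ is compatible with these two presentations; so the bijection is automatically a homeomorphism. Note that the hypothesis that $k$ is a field is genuinely used in the computation on generators above: over a merely connected ring like $k = \mathbf{Z}$, the map $\Hom_{\CAlg_k}(k^S, k) \to |\Spec(k^S)|$ already fails to be surjective.
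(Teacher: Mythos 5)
Your proposal is correct but takes a genuinely different route from the paper. The paper's proof identifies $\Hom_{\CAlg_k}(A, k)$ with the $k$-points of $\Spec(A)$ and shows that this inclusion into $\left|\Spec(A)\right|$ is in fact an equality, using structure theory: since Stone $k$-algebras are absolutely flat (Proposition \ref{Stone implies abs flat}), every prime ideal is maximal, hence every point of $\left|\Spec(A)\right|$ is closed and determined by its residue field $\kappa(x)$; and since Stone $k$-algebras are closed under quotients (shown in the proof of Proposition \ref{Pearl as a subring}), $\kappa(x)$ is itself a Stone $k$-algebra and a field, forcing $\kappa(x) = k$. By contrast, you reduce to the generators $k^S$ for finite $S$ via the filtered-colimit presentation and then invoke the fact that $|\Spec(-)|$ sends filtered colimits of rings to cofiltered limits of topological spaces. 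Your argument is more computational and sidesteps the theory of absolutely flat rings, at the cost of needing the non-trivial (though classical) topological fact that the Zariski topology on $\left|\Spec(\varinjlim_i A_i)\right|$ coincides with the inverse-limit topology on $\varprojlim_i \left|\Spec(A_i)\right|$; the paper's argument works directly at the level of individual points. One small slip in your justification: the identification $\Hom_{\CAlg_k}(\varinjlim_i A_i, k) \simeq \varprojlim_i \Hom_{\CAlg_k}(A_i, k)$ is automatic for any target — Hom sends colimits in the first variable to limits — so no finiteness hypothesis on $k$ is needed. The compactness used in the proof of Theorem \ref{Stone theorem} was that of $k^{S_i}$, and served to move a colimit out of the \emph{second} variable, which is not what is happening here. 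This does not affect the validity of your argument.
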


\begin{proof}
We know that the functor in question
 can be described as $A\mapsto \Hom_{\CAlg_k}(A, k)$, inheriting its profinite topology from $k$. There is an obvious inclusion
 \begin{equation}\label{k-points}
 \Hom_{\CAlg_k}(A, k)\subseteq \left |\Spec(A)\right|,
 \end{equation}
 where the right-hand side is identified with the $k$-points of $\Spec(A)$.
Conversely, suppose $x\in \left |\Spec(A)\right|$ is any point. This corresponds to a prime ideal in the ring $A$, which is absolutely flat Proposition \ref{Stone implies abs flat}. Since any prime ideal in an absolutely flat ring is maximal  \cite[\href{https://stacks.math.columbia.edu/tag/092F}{Lemma 092F}]{stacks-project}, this is a closed point, and therefore determined by the $k$-algebra map $A\to \kappa(x)$ into the residue field. We saw in the proof of \ref{Pearl as a subring} that Stone $k$-algebras are closed under the formation of quotients. Thus $\kappa(x)$ is both a Stone $k$-algebra and a field; but that is only possible if $\kappa(x)=k$ itself. Thus $x$ corresponds to a $k$-algebra map $A\to k$ and so the subspace inclusion \eqref{k-points} is an equality.
\end{proof}

\subsection{The pearl and connected components}\label{Subsection 1.3}
The pearl of a $k$-algebra can be expressed without explicitly referencing Stone $k$-algebras. Since a $k$-algebra $A$ is Stone precisely when the canonical $k$-algebra homomorphism from the pearl $A^\circ\to A$ is an isomorphism, this in fact characterizes Stone $k$-algebras.

\begin{cons}\label{Cons of pi_0}
Let $\pi_0(X)$ denote\footnote{Note that $\pi_0(X)$ is used here, in accordance with the conventions of \cite{stacks-project}, for the set of connected components, as opposed to its more common usage for the set of \textit{path-connected} components.} the set of \textit{set of connected components} of a topological space $X$, equipped with the quotient topology. It is totally disconnected by  \cite[\href{https://stacks.math.columbia.edu/tag/08ZL}{Lemma 08ZL}]{stacks-project}, and the resulting functor $\pi_0: \mathrm{Top}\to \mathrm{Top}^\mathrm{tot.disc.}$ is left adjoint to the full subcategory inclusion $\mathrm{Top}^\mathrm{tot.disc.}\subseteq\mathrm{Top}$.
Restricting to the full subcategory $\mathrm{Top}^\mathrm{sp}\subseteq\mathrm{Top}$ of spectral spaces in the sense of \cite[\href{https://stacks.math.columbia.edu/tag/08YG}{Definition 08YG}]{stacks-project}, then $\pi_0(X)$ is profinite by  \cite[\href{https://stacks.math.columbia.edu/tag/0906}{Lemma 0906}]{stacks-project}. Thus $\pi_0:\mathrm{Top}^\mathrm{sp}\to \mathrm{ProFin}$ is left adjoint to the inclusion $\mathrm{ProFin}\to \mathrm{Top}^\mathrm{sp}$ which identifies profinite sets with totally disconnected spectral spaces.
\end{cons}

\begin{prop}\label{Pearl as pi_0}
Let $k$ be a field and $A$ a $k$-algebra. The universal map of affine schemes $\Spec(A)\to \Spec(A^\circ)$ exhibits a homeomorphism of underlying topological spaces
$$
\left|\Spec(A^\circ)\right|\simeq \pi_0(\left|\Spec(A)\right|).
$$ 
Here the set of connected components $\pi_0(\left|\Spec(A)\right|)$ is equipped with the inherited quotient topology from $\left|\Spec(A)\right|$, which is totally disconnected i.e.\ profinite. 
\end{prop}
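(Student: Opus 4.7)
The plan is to identify both $|\Spec(A^\circ)|$ and $\pi_0(|\Spec(A)|)$ with the object of $\mathrm{ProFin}$ that corepresents the same functor, namely $S \mapsto \Hom_{\mathrm{Top}}(|\Spec(A)|, S)$ restricted to profinite $S$. Once both sides represent the same functor, Yoneda produces the homeomorphism, and a naturality check identifies it with the canonical map induced by $A^\circ \to A$.

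The first step is to produce, for any profinite set $S = \varprojlim_i S_i$ with $S_i$ finite, a natural bijection
$$
\Hom_{\mathrm{Top}}(|\Spec(A)|, S) \simeq \Hom_{\CAlg_k}(\mathcal{C}(S, k), A).
$$
For a finite discrete set $S_i$, a continuous map $|\Spec(A)| \to S_i$ is the same as a partition of $|\Spec(A)|$ into $|S_i|$ clopen pieces, which by the standard correspondence between clopens of $|\Spec(A)|$ and idempotents of $A$ amounts to a complete orthogonal system of idempotents indexed by $S_i$, i.e. to a $k$-algebra map $k^{S_i} \to A$. Since $\mathcal{C}(S,k) = \varinjlim_i k^{S_i}$ and $\Hom_{\CAlg_k}(-, A)$ turns this filtered colimit into the corresponding filtered limit, while $S = \varprojlim_i S_i$ in $\mathrm{Top}$ turns the left side into the same limit, the displayed bijection follows, naturally in both $S$ and $A$.

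The second step combines this with the pearl adjunction and Stone duality in the form of Proposition \ref{Stone as underspace}. Since $\mathcal{C}(S, k)$ is a Stone $k$-algebra, any $k$-algebra map $\mathcal{C}(S, k) \to A$ factors uniquely through $A^\circ \to A$, giving
$$
\Hom_{\CAlg_k}(\mathcal{C}(S, k), A) \simeq \Hom_{\CAlg_k^{\mathrm{Stone}}}(\mathcal{C}(S, k), A^\circ) \simeq \Hom_{\mathrm{ProFin}}(|\Spec(A^\circ)|, S).
$$
On the other hand, because $|\Spec(A)|$ is a spectral space, Construction \ref{Cons of pi_0} yields $\Hom_{\mathrm{Top}}(|\Spec(A)|, S) \simeq \Hom_{\mathrm{ProFin}}(\pi_0(|\Spec(A)|), S)$ for all profinite $S$. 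Splicing the two chains of natural isomorphisms and invoking Yoneda inside $\mathrm{ProFin}$ produces a canonical homeomorphism $\pi_0(|\Spec(A)|) \simeq |\Spec(A^\circ)|$. Tracing identities backwards through the bijections shows that this homeomorphism is the unique factorization of the continuous map $|\Spec(A)| \to |\Spec(A^\circ)|$ induced by $A^\circ \to A$ through the totally disconnected quotient $\pi_0(|\Spec(A)|)$, which is exactly the map described in the statement.

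The main obstacle I anticipate is the clean execution of the first step: the idempotent/clopen dictionary is classical, but one must ensure that the identification of ring-theoretic data with $k$-algebra data does not introduce spurious choices — this is precisely why the hypothesis that $k$ is a field (or at least connected) is invoked, so that the only $k$-algebra maps $k \to k$ appearing in the decomposition are identities. A secondary concern is confirming that the Yoneda-extracted homeomorphism in the third step coincides with the geometric map from $\Spec(A) \to \Spec(A^\circ)$; this reduces to an unwinding of the unit of the pearl adjunction against the unit of the $\pi_0$ adjunction, and should pose no genuine difficulty once the naturality of Step 1 is in hand.
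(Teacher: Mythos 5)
Your proof is correct and takes essentially the same approach as the paper's: the same chain of natural bijections (Stone duality via Proposition~\ref{Stone as underspace}, the pearl adjunction, the clopen--idempotent dictionary reducing to finite sets, and the $\pi_0$-adjunction of Construction~\ref{Cons of pi_0}) spliced together by Yoneda, merely reordered and skipping the paper's intermediate formulation in terms of scheme morphisms. One small correction to your diagnostics: the field hypothesis on $k$ is not what makes your Step~1 work (a $k$-algebra endomorphism of $k$ is always the identity, so the finite-set dictionary between $k$-algebra maps $k^{S_i}\to A$ and complete orthogonal idempotent systems holds over any base ring); it is instead consumed in Step~2, where you invoke Proposition~\ref{Stone as underspace} to identify $\left|\Spec(A^\circ)\right|$ with $\Hom_{\CAlg_k}(A^\circ,k)$.
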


\begin{proof}

Fix a profinite set $S$. There is a chain of natural bijections
\begin{eqnarray*}
\Hom_{\mathrm{ProFin}}(\left|\Spec(A^\circ)\right |, S) &\simeq &\Hom_{\CAlg^\mathrm{Stone}_k}(\CMcal C(S, k), A^\circ)\\
&\simeq &\Hom_{\CAlg_k}(\CMcal C(S, k), A)\\
&\simeq &\Hom_{\mathrm{Sch}_k}(\Spec(A), \Spec(\CMcal C(S, k)))\\
&\simeq &\Hom_\mathrm{Top}(\left|\Spec(A)\right |, S)\\
&\simeq &\Hom_{\mathrm{ProFin}}(\pi_0\left (|\Spec(A)\right |), S) 
\end{eqnarray*}
in which the first bijection is Stone duality, the second is the universal property of the pearl functor, the third is the usual equivalence between affine $k$-schemes and $k$-algebras, the discussion of the fourth bijection we postpone for a moment, and the last bijection is the adjunction from Construction \ref{Cons of pi_0}. Since $S$ was an arbitrary profinite set, the desired result follows by the Yoneda lemma.

It remains to justify the fourth bijection in the above chain. That is to say, we wish to establish a natural bijection
$$
\Hom_{\mathrm{Sch}_k}(\mX, \Spec(\CMcal C(S, k)))\simeq \Hom_{\mathrm{Top}}(\left |\mX\right|, S)
$$
for any affine scheme $\mX =\Spec(A)$. Writing the profinite set as a filtered limit $S=\varprojlim_i S_i$ of finite sets $S_i$, we have a $k$-scheme isomorphism
$$
\Spec(\CMcal C(S, k))\simeq \varprojlim_i \coprod_{S_i}\Spec(k),
$$
from which it follows that  the underlying topological space is given by
$$
\left |\Spec(\CMcal C(S, k))\right | \simeq S\times \left |\Spec(k)\right |
$$
as topological spaces over $\left |\Spec(k)\right |$. The passage  $\mY\mapsto \left |\mY\right|$ to the underlying topological space of a scheme  therefore gives a natural map
$$
\Hom_{\mathrm{Sch}_k}(\mX, \Spec(\CMcal C(S, k)))\to \Hom_{\mathrm{Top}_{/\left |\Spec(k)\right|}}(\left |\mX\right |, S\times \left |\Spec(k)\right |)\simeq  \Hom_{\mathrm{Top}}(\left |\mX\right |, S)
$$
as desired, which we wish to show is a bijection. Since both sides, as well as the construction of the map between them, commute with limits in $S$, we reduce to the case where $S$ is a finite set, let us say with $n$ elements. The left-hand side is equivalent to the set of $k$-algebra homomorphisms $k^n\to A$, i.e.\ an $n$-tuple of pair-wise orthogonal idempotents $e_1, \ldots, e_n\in A$ (i.e.\ $e_ie_j=\delta_{ij}$). The right-hand side consists of continuous maps $f:\left|\mX\right|\to\{1, \ldots, n\}$, i.e.\ a partition of $\left|\mX\right|$ into $n$ disjoint clopen subsets $f^{-1}(\{i\})$. Under the usual correspondence $f^{-1}(\{i\}) = V(e_i)$ between clopens in $\mX$ and idempotents in $A$ of \cite[\href{https://stacks.math.columbia.edu/tag/00EB}{Section 00EB}]{stacks-project}, we obtain the desired bijection.
\end{proof}

\begin{corollary}\label{Pearl as functions}
The pearl of a $k$-algebra $A$ over a field $k$ can be expressed as the algebra of continuous functions
$
A^\circ = \CMcal C(\left |\Spec(A)\right|, k).
$
\end{corollary}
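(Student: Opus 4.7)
The plan is to combine Stone duality (Theorem \ref{Stone theorem}) with the pearl-as-$\pi_0$ description (Proposition \ref{Pearl as pi_0}), and to invoke the universal property of $\pi_0$ for passing to a totally disconnected target.

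First I would observe that since the pearl $A^\circ$ is a Stone $k$-algebra, Theorem \ref{Stone theorem} combined with Proposition \ref{Stone as underspace} identifies it with the $k$-algebra of continuous functions on its own spectrum:
$$
A^\circ \simeq \mathcal{C}(\left|\Spec(A^\circ)\right|, k).
$$
Then Proposition \ref{Pearl as pi_0} gives a homeomorphism $\left|\Spec(A^\circ)\right| \simeq \pi_0(\left|\Spec(A)\right|)$, so substitution yields
$$
A^\circ \simeq \mathcal{C}(\pi_0(\left|\Spec(A)\right|), k).
$$

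The remaining step is to identify $\mathcal{C}(\pi_0(\left|\Spec(A)\right|), k)$ with $\mathcal{C}(\left|\Spec(A)\right|, k)$. This is formal from the adjunction of Construction \ref{Cons of pi_0}: since $k$ is a field viewed as a discrete topological space, it is totally disconnected, so every continuous map from $\left|\Spec(A)\right|$ to $k$ factors uniquely through the quotient $\left|\Spec(A)\right| \twoheadrightarrow \pi_0(\left|\Spec(A)\right|)$. This yields a natural bijection of Hom-sets at the level of underlying sets, and one checks it is a $k$-algebra isomorphism because both pullback along the quotient map and the identification respect pointwise algebraic operations.

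I do not anticipate any serious obstacle: the corollary is essentially a bookkeeping assembly of the two previous results together with the universal property of $\pi_0$. The only mildly subtle point is making sure the two natural isomorphisms compose to the intended canonical map $A^\circ \to \mathcal{C}(\left|\Spec(A)\right|, k)$, namely the one sending an element $a \in A^\circ \subseteq A$ to the function $\mathfrak p \mapsto a \bmod \mathfrak p \in \kappa(\mathfrak p)$, which lands in $k$ by the argument used in Proposition \ref{Stone as underspace} (residue fields of Stone $k$-algebras are equal to $k$). This is implicit in the chain of natural bijections constructed in the proof of Proposition \ref{Pearl as pi_0}, so no additional work is required.
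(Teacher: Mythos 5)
Your proof is correct and follows essentially the same approach as the paper: combine Proposition \ref{Pearl as pi_0} with Stone duality (Theorem \ref{Stone theorem}) to express $A^\circ$ as $\CMcal C(\pi_0(\left|\Spec(A)\right|), k)$, then invoke the $\pi_0$-adjunction from Construction \ref{Cons of pi_0} since $k$ is discrete and hence totally disconnected. The extra granularity you add (explicitly routing through Proposition \ref{Stone as underspace} and verifying the canonicity of the composite map) is a reasonable elaboration of the paper's more compressed argument.
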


\begin{proof}
By Proposition \ref{Pearl as pi_0} and Stone duality in the form of Theorem \ref{Stone theorem}, the pearl of $A$ is given by the formula $
A^\circ = \CMcal C(\pi_0(\left |\Spec(A)\right|), k).
$
Since the discrete set $k$ is totally disconnected, we have $\CMcal C(\left |\Spec(A)\right|, k)\simeq \CMcal C(\pi_0\left |\Spec(A)\right|, k)$ by the adjointness of connected components from Construction \ref{Cons of pi_0}.
\end{proof}

\begin{exun}\label{Example extension}
Let $L/k$ be any field extension. Using Corollary \ref{Pearl as functions}, we may express the pearl of the $k$-algebra $L$ as
$$
L^\circ = \CMcal C(\left|\Spec(L)\right|, k) = \CMcal C(*, k) = k.
$$
In particular, a $k$-algebra $A$ can fail to be Stone when $\left|\Spec(A)\right |$ is totally disconnected (i.e.\ equal to $\pi_0(\left |\Spec(A)\right|)$, even if $A$ is additionally assumed to be reduced.
\end{exun}

\begin{exun}\label{Idem no prod}
Continuing with the previous example, let $L/k$ be a finite Galois extension. Then there is a canonical $k$-algebra isomorphism
$$
L\otimes_k L\simeq \prod_{\mathrm{Gal}(L/k)} L.
$$
Since taking pearls preserves products, we find that the pearl of this tensor product is given by
$$
(L\o_k L)^\circ \simeq \prod_{\mathrm{Gal}(L/k)} L^\circ =  \prod_{\mathrm{Gal}(L/k)} k
$$
On the other hand, we have $L^\circ\o_k L^\circ  = k\o_k k = k$, showing that in this case the injective map $L^\circ\o_k L^\circ \to (L\o_k L)^\circ$ is not bijective whenever $L/k$ is a non-trivial extension.
\end{exun}

\subsection{Stone algebras and the fpqc topology}\label{Subsection 1.4}

We will need some auxiliary results on how Stone $k$-algebras interact with the fpqc topology on $k$-algebras.

\begin{lemma}\label{Lemma ff for Stone}
A $k$-algebra homomorphism $A\to B$ between Stone $k$-algebras is faithfully flat if and only if the induced continuous map $\left|\Spec(B)\right |\to\left|\Spec(A)\right|$ is surjective.
\end{lemma}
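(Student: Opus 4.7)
My plan is to reduce this to the standard commutative-algebra fact that a \emph{flat} ring homomorphism is faithfully flat if and only if the induced map of spectra is surjective (cf.\ \cite[\href{https://stacks.math.columbia.edu/tag/00HQ}{Lemma 00HQ}]{stacks-project}). Once flatness of $A\to B$ is secured, both implications of the lemma will follow from this characterization.

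The forward direction is automatic and requires no Stone hypothesis: faithful flatness of any ring map implies surjectivity on spectra. For the reverse direction, the only real content is to check that every $k$-algebra homomorphism $A\to B$ between Stone $k$-algebras is flat. This is where the Stone hypothesis enters. By Proposition \ref{Is wetal}, both $A$ and $B$ are weakly \'etale $k$-algebras. In particular, $B$ is flat as a $k$-module. By the defining property of weakly \'etale $k$-algebras recalled at the start of Subsection \ref{Subsection 1.2} (that an $A$-module is flat over $A$ iff it is flat over $k$), we conclude that $B$ is flat as an $A$-module.

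Having established flatness of $A\to B$, the assumption that $\left|\Spec(B)\right|\to\left|\Spec(A)\right|$ is surjective then implies faithful flatness by the cited standard lemma, completing the proof. The main obstacle--if any--is not computational but conceptual: it is the observation that weak \'etalit\'e of $A$ lets us upgrade $k$-flatness of $B$ (which we obtain from $B$ being weakly \'etale over $k$) to $A$-flatness of $B$. Everything else is an appeal to already-cited facts, so the argument should be short.
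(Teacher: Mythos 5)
Your proposal is correct and takes essentially the same route as the paper: both reduce the statement to showing that any $k$-algebra map between Stone $k$-algebras is flat (after which Stacks 00HQ finishes), and both secure flatness from Proposition~\ref{Is wetal}. The only micro-difference is which weakly-\'etale lemma supplies the final step — the paper invokes the cancellation property of weakly \'etale maps (Stacks 092L) to get that $A\to B$ is itself weakly \'etale, whereas you invoke the flat-module characterization stated at the top of Subsection~\ref{Subsection 1.2}; these are closely related and both correct.
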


\begin{proof}
It is standard \cite[\href{https://stacks.math.columbia.edu/tag/00HQ}{Lemma 00HQ}]{stacks-project} that a flat ring map is faithfully flat if and only if it induces a surjection on spectra. To prove the desired assertion, it therefore suffices to show that any $k$-algebra homomorphism between Stone $k$-algebras is flat. Since Stone $k$-algebras are weakly \'etale by Proposition \ref{Is wetal},  it follows from \cite[\href{https://stacks.math.columbia.edu/tag/092L}{Lemma 092L}]{stacks-project} that all $k$-algebra map between them are weakly \'etale themselves, and as such flat.
\end{proof}

\begin{lemma}\label{FF for Stone}
Let $k$ be absolutely flat and let $A$ be a Stone $k$-algebra. A $k$-algebra map $A\to B$ 
is faithfully flat if and only if it is injective.
\end{lemma}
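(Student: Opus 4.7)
The plan is to handle the two implications separately. The forward direction ``faithfully flat implies injective'' is a general feature of faithful flatness that does not use anything special about $A$: if $I = \ker(A \to B)$, then $IB = 0$, so by flatness $I \otimes_A B \hookrightarrow B$ identifies $I \otimes_A B$ with $IB = 0$, and faithful flatness then forces $I = 0$.

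The substantive direction is the converse, and I would start by exploiting that Proposition \ref{Stone implies abs flat} makes $A$ itself absolutely flat (since $k$ is). This immediately gives flatness of $B$ over $A$ for free, because every module over an absolutely flat ring is flat. It therefore remains to upgrade flatness to faithful flatness, for which I would invoke the standard module-theoretic criterion (Stacks \href{https://stacks.math.columbia.edu/tag/00HP}{00HP}): a flat $A$-module $M$ is faithfully flat precisely when $M/\mathfrak{m}M \neq 0$ for every maximal ideal $\mathfrak{m}\subseteq A$. Applied to $M = B$, this reduces the task to verifying $\mathfrak{m}B \neq B$ for every maximal $\mathfrak{m}\subseteq A$.

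The heart of the proof is a short contradiction argument whose key input is Lemma \ref{ideals in abs flat}. Supposing $\mathfrak{m}B = B$, write $1 = \sum_{i=1}^n m_i b_i$ with $m_i \in \mathfrak{m}$ and $b_i \in B$. Since $A$ is absolutely flat, the finitely generated ideal $(m_1, \ldots, m_n)\subseteq\mathfrak{m}$ is generated by a single idempotent $e \in A$, which then automatically lies in $\mathfrak{m}$. Writing each $m_i = ea_i$ one obtains $1 = eb$ in $B$ for $b = \sum a_i b_i$; multiplying by $1-e$ gives $1-e = (1-e)(eb) = (e - e^2)b = 0$ in $B$, i.e.\ $e = 1$ in $B$. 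Injectivity of $A \to B$ then promotes this to $e = 1$ in $A$, contradicting $e \in \mathfrak{m}$.

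I do not anticipate any serious obstacle. The only non-mechanical step is spotting that idempotent-generation of finitely generated ideals (Lemma \ref{ideals in abs flat}) is the precise tool that translates the hypothesis of injectivity into the nontriviality of the extended maximal ideals $\mathfrak{m}B$; once that observation is in hand, everything else is routine manipulation.
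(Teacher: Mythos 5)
Your proof is correct, and its converse direction takes a genuinely different route from the paper's. The paper cites a pre-packaged criterion from \cite[Lemma 5.5]{Lurie's Tannaka} --- a ring map $A\to B$ is faithfully flat if and only if it is injective and both $B$ and $B/A$ are flat over $A$ --- and then observes that absolute flatness of $A$ (Proposition \ref{Stone implies abs flat}) makes the two flatness hypotheses automatic, so the whole statement collapses to injectivity in one line. You instead prove faithful flatness from scratch: flatness of $B$ is free over the absolutely flat ring $A$, and you upgrade it to faithful flatness via the standard criterion that a flat module is faithfully flat exactly when it is nonzero modulo every maximal ideal, using Lemma \ref{ideals in abs flat} to replace a hypothetical relation $1=\sum m_i b_i$ with a single idempotent $e\in\mathfrak m$ whose image in $B$ equals $1$, whence injectivity forces the contradiction $e=1$ in $A$. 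Your version is more elementary and self-contained, substituting a short computation plus a lemma already established in the paper for an external citation, and in effect it re-derives the relevant special case of Lurie's lemma; the paper's version is shorter because it outsources exactly that derivation. Both hinge on the same two ingredients (absolute flatness of Stone $k$-algebras and the idempotent-generated structure of finitely generated ideals in absolutely flat rings), so the underlying mechanism is the same even though the packaging differs.
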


\begin{proof}

Recall from \cite[Lemma 5.5]{Lurie's Tannaka} that a ring map $A\to B$ is faithfully flat if and only if it is injective and both $B$ and $B/A$ are flat $A$-modules. Since Stone $k$-algebras are absolutely flat by Proposition \ref{Stone implies abs flat}, the flatness requirements are always satisfied.  
\end{proof}

\begin{prop}\label{site preservation lemma}
The forgetful functor $U :\CAlg_k^\mathrm{Stone}\to \CAlg_k$ preserves fpqc covers. If $k$ is absolutely flat, then so does its right adjoint $(-)^\circ : \CAlg_k\to \CAlg_k^\mathrm{Stone}$.
\end{prop}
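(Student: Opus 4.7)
The plan is to treat the two claims separately, relying on the description of fpqc covers as finite families whose target-product is faithfully flat.

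For the first claim, an fpqc cover $\{A \to B_i\}$ in $\CAlg_k^\mathrm{Stone}$ comes with a finite subfamily whose target-product is faithfully flat over $A$. This finite product, computed in $\CAlg_k^\mathrm{Stone}$, coincides with the ordinary ring product: under the Stone duality of Theorem~\ref{Stone theorem} it corresponds to a disjoint union of profinite sets, which is again profinite, so finite products of Stone $k$-algebras agree with finite products in $\CAlg_k$. Since faithful flatness of a ring map does not depend on whether the source and target are viewed in $\CAlg_k^\mathrm{Stone}$ or $\CAlg_k$, the same finite subfamily witnesses the image family as an fpqc cover in $\CAlg_k$.

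For the second claim, assume $k$ is absolutely flat. Given an fpqc cover $\{A \to B_i\}_{i \in I}$ in $\CAlg_k$, pick a finite $J \subseteq I$ with $A \to \prod_{j \in J} B_j$ faithfully flat. The pearl functor is a right adjoint by Proposition~\ref{Pearl exists} and so preserves finite products, giving $\prod_{j \in J} B_j^\circ \simeq (\prod_{j \in J} B_j)^\circ$. It therefore suffices to verify that $A^\circ \to \prod_{j \in J} B_j^\circ$ is faithfully flat in $\CAlg_k^\mathrm{Stone}$. Lemma~\ref{FF for Stone} reduces this, via the assumption that $k$ is absolutely flat (so $A^\circ$ is absolutely flat by Proposition~\ref{Stone implies abs flat}), to injectivity of the map. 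For that I would consider the commutative square
$$
\begin{CD}
A^\circ @>>> \prod_{j \in J} B_j^\circ \\
@VVV @VVV \\
A @>>> \prod_{j \in J} B_j,
\end{CD}
$$
whose left vertical arrow is injective by Proposition~\ref{Pearl as a subring}, and whose bottom arrow is injective because any faithfully flat ring map is injective. The composite $A^\circ \to A \to \prod_{j \in J} B_j$ is thus injective, forcing the top arrow $A^\circ \to \prod_{j \in J} B_j^\circ$ to be injective as well.

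I do not anticipate any serious obstacle. The main ingredients are the right-adjointness of the pearl (yielding preservation of finite products), the injectivity of the pearl map $A^\circ \hookrightarrow A$ (Proposition~\ref{Pearl as a subring}), and the reformulation of faithful flatness for Stone algebras as injectivity (Lemma~\ref{FF for Stone}) — all available from the preceding subsections. The role of absolute flatness of $k$ in the second half is precisely to license the use of Proposition~\ref{Pearl as a subring} and Lemma~\ref{FF for Stone}; without it, the injectivity of $A^\circ \to A$ is not guaranteed, as illustrated by Example~\ref{Example extension}.
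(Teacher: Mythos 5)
Your proof is correct and follows essentially the same route as the paper's. For the pearl functor claim, you and the paper both combine the injectivity of the pearl inclusion $A^\circ \hookrightarrow A$ (Proposition~\ref{Pearl as a subring}), the fact that a faithfully flat map is injective, and Lemma~\ref{FF for Stone} converting injectivity back into faithful flatness out of the Stone algebra $A^\circ$. The only cosmetic difference is that the paper phrases the core step for a single faithfully flat map $A\to B$ and leaves the reduction from finite subfamilies implicit, while you make the product $\prod_{j\in J}B_j$ and the identification $\prod_j B_j^\circ \simeq (\prod_j B_j)^\circ$ explicit; and for the first claim, the paper simply notes it is immediate because the topology on $(\CAlg_k^\mathrm{Stone})^\mathrm{op}$ is declared to be inherited from $\CAlg_k$, whereas your remark that finite products of Stone algebras agree with those in $\CAlg_k$ (Stone duality sending them to finite disjoint unions of profinite sets) is a correct but unnecessary verification. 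Neither difference affects correctness.
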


\begin{proof}
The claim for the forgetful functor is immediate.
For the statement about pearl functor, note that $A\to B$ is any faithfully flat morphism of $k$-algebras, it is in particular injective. The induced morphism of pearls $A^\circ\to B^\circ$ can thanks to Proposition \ref{Pearl as a subring} be viewed as map between subrings of $A$ and $B$. From this perspective, it is clearly injective as well, and therefore faithfully flat by Lemma \ref{FF for Stone}.
\end{proof}

The pearl functor does not in general preserve tensor products, as we will see in Example \ref{Idem no prod}.  Nonetheless, there is a sufficiently well-behaved comparison map.

\begin{prop}\label{Pearl and products}
Let $k$ be absolutely flat, $A$ a Stone $k$-algebra, and $A\to B$ a faithfully flat $k$-algebra map.
The maps $B\rightrightarrows B\otimes_A B$
 induce a canonical faithfully flat map on pearls
$$
B^\circ\otimes_A B^\circ\to (B\otimes_AB)^\circ.
$$
\end{prop}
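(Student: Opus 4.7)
The plan is to construct the map $\phi \colon B^\circ \otimes_A B^\circ \to (B \otimes_A B)^\circ$, verify that its source is a Stone $k$-algebra, and prove that $\phi$ is injective; Lemma~\ref{FF for Stone} will then promote injectivity to faithful flatness.

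To construct $\phi$, first observe that since $A$ is itself a Stone $k$-algebra, the universal property of the pearl applied to the structure map $A\to B$ yields a canonical factorization $A \to B^\circ \hookrightarrow B$, endowing $B^\circ$ with an $A$-algebra structure. Applying the pearl functor to the two canonical maps $B \rightrightarrows B \otimes_A B$ produces a pair of $A$-algebra homomorphisms $B^\circ \rightrightarrows (B \otimes_A B)^\circ$, and both restrict along $A\to B^\circ$ to the unique $A$-algebra map $A \to (B\otimes_A B)^\circ$ furnished by the universal property of the pearl. Hence the two maps agree on $A$, and the pushout property of $B^\circ \otimes_A B^\circ$ delivers the desired $\phi$. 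Moreover, $B^\circ \otimes_A B^\circ$ is itself Stone: it is a pushout in $\CAlg_k$ of Stone $k$-algebras, and Stone $k$-algebras are closed under finite colimits in $\CAlg_k$, as recorded in the proof of Proposition~\ref{Pearl exists}.

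The crux is injectivity of $\phi$, and the strategy is to compose $\phi$ with the injection $(B\otimes_A B)^\circ \hookrightarrow B\otimes_A B$ of Proposition~\ref{Pearl as a subring}. Unwinding the construction of $\phi$, this composite coincides with $i\otimes_A i$, where $i\colon B^\circ \hookrightarrow B$ denotes the pearl inclusion. Both $B^\circ$ and $B$ are flat over $A$: $B^\circ$ because any $k$-algebra map between Stone $k$-algebras is weakly étale, hence flat (by Proposition~\ref{Is wetal} together with the argument used in the proof of Lemma~\ref{Lemma ff for Stone}), and $B$ by the faithful flatness hypothesis. Therefore tensoring $i$ with itself over $A$ preserves injectivity, so $i\otimes_A i$ is injective and consequently so is $\phi$.

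The main subtlety is the identification of the composite $B^\circ \otimes_A B^\circ \xrightarrow{\phi} (B\otimes_A B)^\circ \hookrightarrow B\otimes_A B$ with $i\otimes_A i$, which amounts to careful bookkeeping with the universal property of the pearl in both tensor factors. Once this is in place, Lemma~\ref{FF for Stone}, whose hypotheses are met since $k$ is absolutely flat and the source $B^\circ \otimes_A B^\circ$ is Stone, concludes the proof.
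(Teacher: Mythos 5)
Your proof is correct and follows essentially the same route as the paper: show $B^\circ\otimes_A B^\circ$ is Stone, factor the induced map through the pearl of $B\otimes_A B$, and prove injectivity via the factorization $B^\circ\otimes_A B^\circ\to B^\circ\otimes_A B\to B\otimes_A B$ before invoking Lemma~\ref{FF for Stone}. The only cosmetic differences are that you obtain Stone-ness of the source from closure under pushouts (where the paper views it as a quotient of $B^\circ\otimes_k B^\circ$), and you deduce injectivity from flatness of $B^\circ$ and $B$ over $A$ rather than from faithful flatness of $B^\circ\to B$ -- both equally valid.
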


\begin{proof}
The ring maps $B\rightrightarrows B\otimes_AB$ give rise to a $k$-algebra map $B^\circ\o_A B^\circ \to B\o_A B$. The domain $B^\circ\o_A B^\circ$ is a quotient of the Stone $k$-algebra $B^\circ\o_k B^\circ$, and we saw in the proof of Proposition \ref{Pearl as a subring} that Stone algebras are closed under quotients. Therefore $B^\circ \o_A B^\circ$ is a Stone $k$-algebra as well. It follows from the universal property of the pearl that the $k$-algebra  $B^\circ\o_A B^\circ \to B\o_A B$ factors through the inclusion $(B\o_A B)^\circ \subseteq B\o_A B$.
To show that the induced ring map is injective, and therefore faithfully flat by Lemma \ref{FF for Stone}, it suffices by Proposition \ref{Pearl as a subring} to prove that $B^\circ \o_A B^\circ \to B\o_A B$ is.
But since $A\to B$ is faithfully flat, the morphism in question factors as a composite of injections
$$
B^\circ\o_A B^\circ \to B^\circ \otimes_A B\to B\otimes_A B
$$
where we twice applied  that $B^\circ\to B$ is faithfully flat, as follows from combining Proposition \ref{Pearl as a subring} and Lemma \ref{FF for Stone}.
\end{proof}

In the foundations of condensed mathematics, profinite sets are equipped with the \textit{effective epimorphism topology}.
 That means that a collection of maps $\{f_i : X_i\to Y\}_{i\in I}$ in $\mathrm{ProFin}$ is a cover if and only if a finite subset of it $\{f_{i_1}, \ldots, f_{i_n}\}$ is jointly surjective, i.e.\ if $f_{i_1}(X_1)\cup\cdots f_{i_n}(X_n)=Y.$ This finitely-jointly-surjective condition is reminiscent of the analogous requirement in the definition of the fpqc topology on affines. As we now show, this is no coincidence.

\begin{prop}\label{Prop1}
Let $k$ be a field.
Stone duality of Theorem \ref{Stone theorem} induces an isomorphism of sites
$$
(\mathrm{ProFin}, \tau_{\mathrm{eff}})\simeq ((\CAlg_k^\mathrm{Stone})^\mathrm{op}, \tau_\mathrm{fpqc})
$$
between profinite sets with the effective epimorphism topology and the opposite of Stone $k$-algebras with the fpqc topology, inherited from the inclusion  $\CAlg_k^\mathrm{Stone}\subseteq \CAlg_k$.
\end{prop}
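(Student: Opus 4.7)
The plan is to reduce the claim to two almost-tautological observations, both building on the previous results in the section. First, Stone duality of Theorem \ref{Stone theorem}, combined with Proposition \ref{Stone as underspace}, already gives the equivalence of underlying categories in the form $A \leftrightarrow |\Spec(A)|$, so all that remains is to check that the covering families correspond. Unwinding definitions, it suffices to show that a family $\{A \to B_i\}_{i \in I}$ in $\CAlg_k^\mathrm{Stone}$ is an fpqc cover (inherited from $\CAlg_k$) if and only if there is a finite subset $\{i_1,\ldots,i_n\} \subseteq I$ for which the induced map of profinite sets
$$
|\Spec(B_{i_1})| \sqcup \cdots \sqcup |\Spec(B_{i_n})| \longrightarrow |\Spec(A)|
$$
is surjective, since this is precisely the condition for $\{|\Spec(B_i)| \to |\Spec(A)|\}$ to be an effective-epimorphism cover of $|\Spec(A)|$.

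Next, I would observe that the flatness condition in the definition of an fpqc cover is automatic here: indeed, as noted in the proof of Lemma \ref{Lemma ff for Stone}, every $k$-algebra homomorphism between Stone $k$-algebras is weakly étale, and in particular flat, as a consequence of Proposition \ref{Is wetal} and \cite[\href{https://stacks.math.columbia.edu/tag/092L}{Lemma 092L}]{stacks-project}. Therefore $\{A \to B_i\}$ is an fpqc cover exactly when some finite subfamily yields a faithfully flat map $A \to B_{i_1} \times \cdots \times B_{i_n}$. I would then note that finite products of Stone $k$-algebras are again Stone: writing each $B_{i_j}$ as a filtered colimit of algebras of the form $k^{T}$ for finite $T$ and using that finite products commute with filtered colimits in $\CAlg_k$ reduces this to the identity $k^{T_1} \times \cdots \times k^{T_n} = k^{T_1 \sqcup \cdots \sqcup T_n}$.

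Having landed inside $\CAlg_k^\mathrm{Stone}$, I would apply Lemma \ref{Lemma ff for Stone} directly to the map $A \to B_{i_1} \times \cdots \times B_{i_n}$: this map is faithfully flat if and only if it induces a surjection on spectra, which by the affine-coproduct description of the product translates to joint surjectivity of $\{|\Spec(B_{i_j})| \to |\Spec(A)|\}_{j=1}^n$. Assembling the equivalences in both directions then gives the desired isomorphism of sites.

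There is no genuine obstacle in the proof; the mild technical point one has to be careful about is simply that the finite-subfamily clause in the fpqc definition matches the effective-epimorphism finite-subfamily clause verbatim once one knows (i) all morphisms in $\CAlg_k^\mathrm{Stone}$ are flat and (ii) finite products in $\CAlg_k^\mathrm{Stone}$ agree with those in $\CAlg_k$. Both of these are straightforward consequences of material already established, so the statement ultimately follows by combining Proposition \ref{Stone as underspace} with Lemma \ref{Lemma ff for Stone}.
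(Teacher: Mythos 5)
Your proof is correct and follows essentially the same route as the paper's: identify objects via Proposition~\ref{Stone as underspace}, then match covers using Lemma~\ref{Lemma ff for Stone}. The paper compresses this by saying that it suffices to check single-element covers (which implicitly uses that finite products of Stone $k$-algebras are Stone, so that a finite jointly-faithfully-flat subfamily can be replaced by one faithfully flat map, dually that a finite jointly-surjective subfamily can be replaced by a map from a finite disjoint union of profinite sets), whereas you spell out that reduction directly; your explicit remarks that all maps between Stone $k$-algebras are flat and that finite products of Stone $k$-algebras agree with those in $\CAlg_k$ are exactly the implicit ingredients in the paper's terse argument, so there is no genuine difference in approach.
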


\begin{proof}
We need to show that the Stone duality equivalence of categories identifies covers of profinite sets in the effective epimorphism topology with fpqc covers of Stone $k$-algebras. By the definitions of both topologies in question, it suffices to do so for covers consisting of a single element. This follows directly from Proposition \ref{Stone as underspace} and Lemma \ref{Lemma ff for Stone}.
\end{proof}

The hypothesis in Proposition \ref{Prop1} that the ground ring $k$ is a field is there to ensure that both the Stone duality of Theorem \ref{Stone theorem} and the preceding results concerning faithful flatness can be applied. In fact, this requirement singles out fields among all rings.

\begin{lemma}\label{what are fields}
A ring $k$ is both absolutely flat and has no idempotent elements other than $0, 1\in k$ if and only if it is a field.
\end{lemma}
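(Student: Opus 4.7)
The plan is to prove the two implications separately, using Lemma \ref{ideals in abs flat} for the substantive direction.

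First I would dispatch the easy direction. If $k$ is a field, then every $k$-module is a vector space, hence free, hence flat, so $k$ is absolutely flat. Moreover, if $e\in k$ is idempotent, then $e(e-1)=0$, which in the field $k$ forces $e=0$ or $e=1$. So a field satisfies both hypotheses.

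For the converse, suppose $k$ is absolutely flat and $\{0,1\}\subseteq k$ is the full set of idempotents. By Lemma \ref{ideals in abs flat}, every principal ideal $(a)\subseteq k$ is generated by an idempotent, say $(a)=(e)$ for some idempotent $e\in k$. By assumption $e\in\{0,1\}$, so $(a)$ is either the zero ideal or all of $k$. In other words, for every $a\in k$ either $a=0$ or $a$ is a unit. This is the definition of a field.

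There is essentially no obstacle here: everything already assembled in Subsection \ref{Subsection 1.2} does the work, so the lemma is just a convenient repackaging. The only minor point to note is that Lemma \ref{ideals in abs flat} is stated for finitely generated ideals, but we only need the principal case, which is precisely the portion of its proof that produces the idempotent generator $e=ab$ from $a=a^2b$.
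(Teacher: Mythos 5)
Your proof is correct and follows essentially the same route as the paper: the easy direction is immediate, and the converse invokes Lemma \ref{ideals in abs flat} to write each principal ideal $(a)$ as $(e)$ for an idempotent $e\in\{0,1\}$, forcing $a=0$ or $a$ a unit. The paper's proof is nearly word-for-word the same, so there is nothing to add.
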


\begin{proof}
That clearly holds for a field. Conversely, if $k$ is absolutely flat, then by Lemma \ref{ideals in abs flat}, each of its principal ideals is generated by an idempotent element. Since $0, 1\in k$ are the only idempotents, we have for any $a\in k$ either $(a)=(1)=k$, in which case $a$ is invertible, or $(a)=(0)$, in which case $a=0$. That is to say, $k$ is a field.
\end{proof}

\subsection{Stone algebras over fields}\label{Subsection 1.5}

Any Stone algebra is weakly \'etale by Lemma \ref{Is wetal}.
 The question of when the converse holds, at least over a field, is answered by the following.

\begin{prop}Let $k$ be a field.
The  inclusion of categories
$\CAlg_k^\mathrm{Stone}\subseteq\CAlg_k^\mathrm{w\acute{e}t}$ is the identity  if and only if the field $k$ is separably closed.
\end{prop}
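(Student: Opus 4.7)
The plan is to prove the two directions of the biconditional separately.

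For the $(\Rightarrow)$ implication, I would argue by contrapositive: suppose $k$ is not separably closed, and choose a nontrivial finite separable extension $L/k$. Then $L$ is a finite \'etale $k$-algebra, and in particular is weakly \'etale. On the other hand, Example \ref{Example extension} gives $L^\circ = \CMcal C(\ast, k) = k \ne L$, so the pearl map $L^\circ \to L$ is not an isomorphism and $L$ is not Stone. This exhibits a weakly \'etale $k$-algebra lying outside $\CAlg_k^\mathrm{Stone}$.

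For the $(\Leftarrow)$ implication, assume $k$ is separably closed and let $A$ be an arbitrary weakly \'etale $k$-algebra. The essential input is the structural theorem of Bhatt--Scholze \cite{Bhatt-Scholze}: there exists a faithfully flat $k$-algebra map $A \to B$ such that the composite $k \to B$ is ind-\'etale, i.e., a filtered colimit of \'etale $k$-algebras. Over the separably closed field $k$, every \'etale $k$-algebra is a finite product of finite separable extensions of $k$; since all such extensions are trivial, every \'etale $k$-algebra has the form $k^S$ for a finite set $S$. Consequently $B$ is a filtered colimit of $k$-algebras of the form $k^{S_i}$, which is to say $B$ is a Stone $k$-algebra.

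To finish, I would extract from the proof of Proposition \ref{Pearl as a subring} the fact that $k$-subalgebras of Stone $k$-algebras are themselves Stone. Since $A \to B$ is faithfully flat it is in particular injective, so $A$ embeds as a $k$-subalgebra of the Stone $k$-algebra $B$, and therefore $A$ is itself Stone. The main obstacle in this argument is the structural input from Bhatt--Scholze, the nontrivial assertion that any weakly \'etale algebra admits a faithfully flat extension which is ind-\'etale over the base; once this is granted, the rest is a direct assembly of results already developed in Section \ref{Section 1}.
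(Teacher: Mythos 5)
Your proof is correct. The $(\Rightarrow)$ direction is essentially identical to the paper's (you phrase it as a contrapositive, the paper argues directly, but the content --- Example \ref{Example extension} applied to a nontrivial finite separable extension $L/k$ --- is the same).

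The $(\Leftarrow)$ direction takes a genuinely different route. The paper cites the Stacks project result (\href{https://stacks.math.columbia.edu/tag/092Q}{Lemma 092Q}) that over a field $k$, every weakly \'etale $k$-algebra is already a filtered colimit of finite \'etale $k$-algebras; combined with the observation that finite \'etale over separably closed $k$ means a product of copies of $k$, this gives the result in one step. You instead invoke the full Bhatt--Scholze structure theorem (a weakly \'etale $k$-algebra $A$ admits a faithfully flat cover $A \to B$ with $k \to B$ ind-\'etale), reduce $B$ to a Stone $k$-algebra over separably closed $k$, and then close the argument using the fact --- established inside the proof of Proposition \ref{Pearl as a subring} --- that $k$-subalgebras of Stone $k$-algebras are Stone when $k$ is absolutely flat. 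Both are sound. The paper's citation is shorter and works directly; your route is a valid re-derivation that trades the field-specific Stacks lemma for the general Bhatt--Scholze theorem at the cost of an extra descent-to-subalgebra step, and it makes more visible how the material from Subsection \ref{Subsection 1.2} feeds in. Note that \href{https://stacks.math.columbia.edu/tag/092Q}{Lemma 092Q} is itself essentially the Bhatt--Scholze theorem specialized to a field base, so your approach is not really more elementary, just differently packaged.
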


\begin{proof}
A finite \'etale $k$-algebra is a product of rings $L_1\oplus\cdots\oplus L_n$ of finite separable field extensions $L_i/k$. If $k$ is separable, then we have $L_i = k$ for all $i$, showing that every finite \'etale $k$-algebra is of the form $k^S$ for some $S\in \mathrm{Fin}$. Over a field $k$, weakly \'etale $k$-algebras can by \cite[\href{https://stacks.math.columbia.edu/tag/092Q}{Lemma 092Q}]{stacks-project} all be described as filtered colimits of finite \'etale $k$-algebras, showing that they agree with Stone $k$-algebras.

Conversely, suppose that every weakly \'etale $k$-algebra is Stone. Any finite separable field extension $L/k$ is weakly \'etale, which implies that $L^\circ = L$. But the calculation from Example \ref{Example extension} shows that $L^\circ = k$. It follows that $L=k$ is the only finite separable field extension, which means that $k$ is separably closed.
\end{proof}

\begin{remark}
The pearl of an algebra $A$ over a separably closed field $k$ can be identified with the maximal weakly \'etale subalgebra $A^\circ = B_\mathrm{max}(A)$ of \cite[\href{https://stacks.math.columbia.edu/tag/0CKS}{Lemma 0CKS}]{stacks-project}.
\end{remark}

\begin{corollary}\label{proetale site}
Let $k$ be a field.
The site $((\CAlg_k^\mathrm{Stone})^\mathrm{op}, \tau_\mathrm{fpqc})$ is equal to the pro-\'etale $\Spec(k)_\mathrm{pro\acute{e}t}$ of \cite{Bhatt-Scholze}
if and only if $k$ is separably closed.
\end{corollary}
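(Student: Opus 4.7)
The plan is to deduce this as an essentially immediate consequence of the preceding proposition, together with the definition of the Bhatt--Scholze pro-\'etale site and Proposition \ref{Prop1}.

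First, I would recall the definition: the pro-\'etale site $\Spec(k)_\mathrm{pro\acute{e}t}$ of \cite{Bhatt-Scholze} has as underlying category the weakly \'etale $k$-algebras $\CAlg_k^\mathrm{w\acute{e}t}$ (or rather its opposite), equipped with the fpqc topology. Since both the pro-\'etale site and the site in question use the fpqc topology restricted from $\CAlg_k$, the equality of sites reduces to the equality of the underlying full subcategories $\CAlg_k^\mathrm{Stone}$ and $\CAlg_k^\mathrm{w\acute{e}t}$ inside $\CAlg_k$.

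For the forward direction, if $k$ is separably closed then the preceding proposition gives $\CAlg_k^\mathrm{Stone}=\CAlg_k^\mathrm{w\acute{e}t}$ as full subcategories of $\CAlg_k$, and since the fpqc topology on each is inherited from the same ambient category, the sites coincide. Conversely, if the sites coincide then in particular the underlying categories coincide, so $\CAlg_k^\mathrm{Stone}=\CAlg_k^\mathrm{w\acute{e}t}$, and the preceding proposition forces $k$ to be separably closed.

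The only subtle point is making sure that the topology on $\Spec(k)_\mathrm{pro\acute{e}t}$ as used in \cite{Bhatt-Scholze} really agrees with the fpqc topology inherited from $\CAlg_k$; this is standard since any map of weakly \'etale $k$-algebras is automatically weakly \'etale (hence flat), so pro-\'etale covers of weakly \'etale algebras coincide with fpqc covers, matching the conventions of Proposition \ref{Prop1}. There is no real obstacle here --- the proposition is a direct corollary of results already in hand.
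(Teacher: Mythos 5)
Your argument is correct and is essentially the proof the paper intends (the paper gives no explicit proof, treating the corollary as immediate from the preceding proposition). You correctly reduce the equality of sites to the equality of underlying categories $\CAlg_k^\mathrm{Stone}=\CAlg_k^\mathrm{w\acute{e}t}$ and correctly note that the pro-\'etale topology on weakly \'etale $k$-algebras is just the inherited fpqc topology, by the same reasoning (maps between weakly \'etale algebras are flat) that the paper uses in Lemma \ref{Lemma ff for Stone} for Stone algebras.
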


\begin{remark}
Corollary \ref{proetale site} shows that the Stone $k$-algebra approach to condensed sets, stemming from Proposition \ref{Prop1} and expanded upon in the next section, recovers over a separable field the original pro-\'etale topology approach in which condensed sets are the sheaves on the pro-\'etale site of a point. This accords with the fact that the spectrum of a field $\Spec(k)$ is really only a point from the \'etale perspective if $k$ is separably closed. 
\end{remark}

Another family of fields over which we can identify Stone algebras with a known class of algebras are the prime fields $\mathbf F_p$ for prime numbers $p$.
Recall that an $\mathbf F_p$-algebra $A$ is called \textit{$p$-Boolean} (classically also \textit{$p$-ring}) if any element $a\in A$ satisfies $a^p=a$. At  $p=2$, this recovers the notion of \textit{Boolean rings}, which are canonically equivalent to the Boolean algebras of lattice theory and logic by  \cite{Stone on rings}.

\begin{prop}\label{Stone is Bool}
Stone $\mathbf F_p$-algebras coincide with $p$-Boolean algebras for any prime $p$.
\end{prop}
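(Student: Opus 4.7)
The plan is to prove the two inclusions separately, using Definition \ref{Def of Stone} directly.

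For the direction that every Stone $\mathbf{F}_p$-algebra is $p$-Boolean, I would first observe that the generators $\mathbf{F}_p^S = \bigoplus_S \mathbf{F}_p$ are $p$-Boolean: the identity $a^p = a$ holds in each factor $\mathbf{F}_p$ by Fermat's little theorem, and therefore coordinate-wise on $\mathbf{F}_p^S$. Then I would note that the class of $p$-Boolean $\mathbf{F}_p$-algebras is closed under arbitrary filtered colimits, because the condition $a^p = a$ is an equation imposed on individual elements and such equational conditions pass to filtered colimits. Combining these two observations, the entire full subcategory $\CAlg_{\mathbf F_p}^{\mathrm{Stone}}$ lies inside the $p$-Boolean ones.

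For the converse direction, let $A$ be a $p$-Boolean $\mathbf F_p$-algebra. I would write $A = \varinjlim_i A_i$ as the filtered colimit of its finitely generated $\mathbf{F}_p$-subalgebras, each of which is again $p$-Boolean since the identity $a^p = a$ is inherited. It then suffices to show that every finitely generated $p$-Boolean $\mathbf F_p$-algebra is isomorphic to $\mathbf F_p^S$ for some finite set $S$. So fix generators $a_1, \ldots, a_n\in A$. Since $a_j^p = a_j$, powers higher than $p-1$ reduce, so $A$ is spanned as an $\mathbf F_p$-vector space by the finitely many monomials $a_1^{k_1}\cdots a_n^{k_n}$ with $0\le k_j<p$; in particular $A$ is finite. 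Moreover $A$ is reduced: iterating $a = a^p = a^{p^2} = \cdots$ shows any nilpotent element of a $p$-Boolean algebra vanishes. A finite reduced commutative ring is a finite product of finite fields, and each factor field $L$ satisfies $x^p = x$ for every $x\in L$, forcing $|L|\le p$ and hence $L=\mathbf F_p$. Thus $A\simeq \mathbf F_p^S$ for a finite set $S$, which realizes $A$ as a filtered colimit of generators, i.e.\ as an object of $\CAlg_{\mathbf F_p}^{\mathrm{Stone}}$.

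I do not anticipate a significant obstacle; the one place that needs a little care is the finiteness step, where one has to argue that a finitely generated $p$-Boolean algebra is finite as an $\mathbf F_p$-vector space. Once finiteness and reducedness are in hand, the structure theorem for finite reduced commutative rings finishes the argument.
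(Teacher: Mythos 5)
Your proof is correct, and it takes a genuinely different route from the paper. The paper starts from Corollary \ref{Pearl as functions} (an $\mathbf F_p$-algebra is Stone iff $\CMcal C(\left|\Spec(A)\right|,\mathbf F_p)\to A$ is an isomorphism), then identifies continuous $\mathbf F_p$-valued functions on $\left|\Spec(A)\right|$ with $(p-1)$-tuples of pairwise orthogonal idempotents, and finally invokes Zemmer's result to match those tuples with elements satisfying $a^p=a$. That argument is short because it leans on the pearl machinery and on a cited classical theorem, and it fits the geometric theme of the paper. Your argument instead works directly from Definition \ref{Def of Stone}: one direction is immediate (the generators $\mathbf F_p^S$ satisfy $a^p=a$ coordinate-wise, and the identity passes along filtered colimits), and for the other direction you reduce to finitely generated $p$-Boolean algebras, show they are finite $\mathbf F_p$-vector spaces by reducing exponents modulo the relation $a^p=a$, observe they are reduced by iterating $a=a^p$, and conclude from the structure theorem for finite reduced commutative rings that each is $\mathbf F_p^S$. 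This is more elementary and self-contained — it needs neither the pearl nor Zemmer — at the cost of redoing by hand some of the idempotent bookkeeping that the paper delegates to the cited result. Both proofs are sound; yours would be the natural choice if one wanted this proposition to stand independently of Subsections \ref{Subsection 1.2}–\ref{Subsection 1.3}.
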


\begin{proof}
According to Corollary \ref{Pearl as functions}, an $\mathbf F_p$-algebra $A$ is Stone precisely when the canonical $k$-algebra map $\CMcal C(\left |\Spec(A)\right|, \mathbf F_p)\to A$ is an isomorphism. It therefore remains to identify the continuous functions $\left| \Spec(A)\right|\to \mathbf F_p$ with the elements $a\in A$ which satisfy $a^p=a$. Since the codomain of a function $\left| \Spec(A)\right|\to \mathbf F_p$ is discrete, its fibers (i.e.\ the preimages of points) determine a partition of the topological space $\left |\Spec(A)\right|$ into $p$ clopen subsets. The latter correspond to idempotent elements $e_1, \ldots, e_p\in A$ which are pair-wise orthogonal in the sense that $e_ie_j =0$ holds for all $i\ne j$, and which satisfy $e_1+\cdots + e_p = 1$. Since the last idempotent $e_p$ is automatically determined by this equation as $e_p = 1-(e_1+\cdots + e_{p-1})$, this is equivalent to the collection of pair-wise idempotents $e_1, \ldots, e_{p-1}\in A$. By \cite[Theorem 1, Corollary 1]{Zemmer: p-rings and their Boolean geometry} this corresponds precisely to an element $a\in A$ for which $a^p=a$, with the correspondence given by
$$
a = e_1 + 2e_2 +\cdots + (p-1)e_{p-1}, \qquad\qquad e_i = 1-(a-i)^p,\quad  i=1, \ldots, p-1.
$$
In particular, the canonical map $\CMcal C(\left |\Spec(A)\right |, \mathbf F_p)\to A$ is given in terms of this presentation by $a\mapsto a$. It follows that an $\mathbf F_p$-algebra $A$ is Stone if and only if $a^p=a$ for all elements $a\in A$, i.e.\ if it is $p$-Boolean.
\end{proof}

\begin{remark}
By the proof of Proposition \ref{Stone is Bool}, the pearl of an $\mathbf F_p$-algebra $A$ is given by
$$
A^\circ = \{a\in A : a^p=a\},
$$
which may equivalently be seen as the fixed points of the Frobenius automorphism $A^{\varphi =1}$. For $p=2$, this is precisely the subring of idempotent elements.
\end{remark}

\begin{corollary}\label{left adjoint Fp}
Let $p$ be a prime number. The forgetful functor $U: \CAlg^\mathrm{Stone}_{\mathbf F_p}\to \CAlg_{\mathbf F_p}$ admits a left adjoint $Q:\CAlg_{\mathbf F_p}\to\CAlg_{\mathbf F_p}^\mathrm{Stone}$.
\end{corollary}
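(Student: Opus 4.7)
The plan is to construct the left adjoint explicitly, relying on Proposition \ref{Stone is Bool}, which identifies Stone $\mathbf{F}_p$-algebras with $p$-Boolean ones, i.e.\ those in which every element $a$ satisfies $a^p = a$. Given an arbitrary $\mathbf{F}_p$-algebra $A$, I would set
$$Q(A) := A/I,\qquad I := (a^p - a \,:\, a\in A),$$
where $I\subseteq A$ is the ideal generated by the Frobenius differences.

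First I would check that $Q(A)$ lies in $\CAlg_{\mathbf{F}_p}^\mathrm{Stone}$. Since every element of the quotient is of the form $[a]$ for some $a\in A$ and satisfies $[a]^p - [a] = [a^p - a] = 0$ by construction, $Q(A)$ is $p$-Boolean, hence Stone by Proposition \ref{Stone is Bool}. Next I would verify the universal property: any $\mathbf{F}_p$-algebra homomorphism $f : A\to B$ into a Stone algebra $B$ sends each generator $a^p - a$ of $I$ to $f(a)^p - f(a) = 0$, because $B$ is $p$-Boolean, so $f$ kills $I$ and descends uniquely to an $\mathbf{F}_p$-algebra map $Q(A) \to B$. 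This yields the adjunction bijection
$$\Hom_{\CAlg_{\mathbf{F}_p}^\mathrm{Stone}}(Q(A), B) \xrightarrow{\sim} \Hom_{\CAlg_{\mathbf{F}_p}}(A, U(B)),$$
naturally in both variables.

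No real obstacle arises, as the whole content is already packed into Proposition \ref{Stone is Bool}, which recasts ``being Stone'' as the single equational condition $a^p = a$. A more categorical route, should one prefer it, would invoke the adjoint functor theorem after observing that $\CAlg_{\mathbf{F}_p}^\mathrm{Stone}$ is presentable (as in the proof of Theorem \ref{Stone theorem}) and, thanks again to Proposition \ref{Stone is Bool}, closed under all limits in $\CAlg_{\mathbf{F}_p}$, since the identity $a^p=a$ is preserved by products and equalizers of $\mathbf{F}_p$-algebras. It is worth noting that the analogous construction fails over an arbitrary field $k$, since Stone $k$-algebras need not be cut out by equations of bounded arity; this is consistent with the fact (cf.\ Example \ref{Example extension}) that a finite separable extension $L/k$ is weakly \'etale but not Stone unless $L=k$.
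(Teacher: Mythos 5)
Your proof is correct and takes essentially the same route as the paper: both construct $Q(A) = A/(a^p - a : a\in A)$ explicitly and appeal to Proposition \ref{Stone is Bool} to identify the target as Stone. The additional adjoint-functor-theorem remark (closure of $p$-Boolean algebras under limits in $\CAlg_{\mathbf F_p}$, plus presentability) is a valid alternative justification not present in the paper, though it does not change the substance of the argument.
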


\begin{proof}
For any $\mathbf F_p$-algebra $A$, consider the quotient construction
$$
Q(A) \coloneq A/(a^p-a: a\in A).
$$
This is a $p$-Boolean $\mathbf F_p$-algebra by construction, and any $\mathbf F_p$-algebra morphism $A\to B$ into a $p$-Boolean ring $B$ factors uniquely through the quotient map $A\to Q(A)$. In light of the identification between Stone $\mathbf F_p$-algebras and $p$-Boolean rings of Proposition \ref{Stone is Bool}, this therefore gives the desired left adjoint.   
\end{proof}

The way we obtained the left adjoint $Q$ in the proof of Corollary \ref{left adjoint Fp} crucially relies on the alternative description of Stone $\mathbf F_p$-algebras from Proposition \ref{Stone is Bool}. In its absence, we are unaware of the existence of such a left adjoint for other base fields $k$.

\begin{quest}
For which fields $k$ does the forgetful functor $U: \CAlg^\mathrm{Stone}_k\to \CAlg_k$ admit a left adjoint?
\end{quest}

\section{Comparison between condensed sets and fpqc sheaves}\label{Section 2}

In this section, we use the theory of Stone algebras and Stone duality developed in the previous section to compare condensed sets and algebraic geometry. The latter must be understood here in a ``functor of points" way, and with respect to the fpqc (as opposed to the Zariski or \'etale) topology.

We begin in Subsection \ref{Subsection 2.1} with a detailed review of how to set up the foundations of both this perspective on algebraic geometry, as well as condensed mathematics, being mindful of the set-theoretical nuances this entails. Building on that, we prove our main theorem in Subsection \ref{Subsection 2.2}. In Subsection \ref{Subsection 2.3} we unpack and examine the functors furnished by the main theorem, preparing for their more detailed in study in the next section.

\subsection{The setup of condensed sets and fpqc sheaves}\label{Subsection 2.1}
Let us fix some notation. We let  $\mP(\mC)\coloneq\mathrm{Fun}(\mC^\mathrm{op}, \mathrm{Set})$ denote the category of presheaves on a category $\mC$. If $\mC$ is equipped with a Grothendieck topology $\tau$, then $\mathrm{Shv}(\mC, \tau)\subseteq \mP(\mC)$ will denote the category of sheaves on the site $(\mC, \tau)$. When the site $(\mC, \tau)$ is small, the sheaf category $\mathrm{Shv}(\mC, \tau)$ is a Grothendieck topos.

The sites $(\mathrm{ProFin}, \tau_\mathrm{eff})$ and $(\mathrm{CAlg}_k^\mathrm{op}, \tau_\mathrm{fpqc})$, of relevance to condensed mathematics and algebraic geometry respectively, are both non-small\footnote{The issue of non-smallness is \textit{\`a priori} present for all standard sites in algebraic geometry, e.g\ Zariski, \'etale, fppf, etc. But unlike those, the fpqc site (just as the profinite site) does not even admit a small set of generators. If it did, we could restrict to the subcategory generated by those without affecting the sheaves, which is the standard way of circumventing these set-theoretic issues in algebraic geometry.}
 Therefore, taking sheaves on them is not guaranteed to lead to well-behaved categories - in particular, they fail to be Grothendieck topoi, and the existence of sheafification functors becomes dubious. These issues are merely of a set-theoretical (more precisely, size-theoretic) nature though, and not conceptual. In this subsection outline the approach to dealing with these issues as developed in the condensed setting by Clausen-Scholze in \cite{Condensed}, and extended to the fpqc setting in  \cite{Dirac2} by Hesselholt-Pstrągowski.

\begin{definition}\label{Def of topoi}
Let $\kappa$ be an uncountable strong limit cardinal and $k$ a $\kappa$-small ring.
The categories of \textit{$\kappa$-condensed sets} and of \textit{$\kappa$-fpqc sheaves over $k$} are the categories of  sheaves
$$
\mathrm{Cond}_\kappa \coloneq {\mathrm{Shv}}(\mathrm{ProFin}_\kappa, \tau_{\mathrm{eff}}), \qquad\quad \mathrm{Shv}^{\mathrm{fpqc}}_{k, \kappa} \coloneq \mathrm{Shv}(\CAlg_{k, \kappa}^\mathrm{op}, \tau_\mathrm{fpqc})
$$
on the respective sites of all $\kappa$-small profinite sets $\mathrm{ProFin}_\kappa$ equipped with the effective epimorphism topology, and the opposite category of  $\kappa$-small $k$-algebras $\CAlg_{k, \kappa}$  equipped with the fpqc topology.
\end{definition}

To dispense with the auxiliary cardinal bound $\kappa$, we make use of the following result, the fpqc-part of which is a variant of a result of Waterhouse \cite{Waterhouse}.

\begin{prop}[{\cite[Proposition 2.9]{Condensed}, \cite[Lemma 12.15, Lemma A.8]{Dirac2}}]\label{up the beanstalk}
Let $\kappa'>\kappa$ be strongly inaccessible cardinals. The left Kan extensions along the subcategory inclusions $\mathrm{ProFin}_\kappa\subseteq \mathrm{ProFin}_{\kappa'}$ and $\CAlg_{k, \kappa}^\mathrm{op}\subseteq \CAlg_{k, \kappa'}^\mathrm{op}$ restrict to canonical functors
$$
\mathrm{Cond}_\kappa\to\mathrm{Cond}_{\kappa'}, \qquad \qquad \quad
\mathrm{Shv}_{k, \kappa}^{\mathrm{fpqc}}\to \mathrm{Shv}_{k, \kappa'}^{\mathrm{fpqc}}
$$
which are fully faithful and commute with all colimits and all $\kappa$-small limits.
\end{prop}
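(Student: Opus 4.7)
My plan is to treat both inclusions in parallel by invoking the same abstract formalism. For each inclusion $i: \mathcal C_\kappa \hookrightarrow \mathcal C_{\kappa'}$ in question (with $\mathcal C = \mathrm{ProFin}$ or $\mathcal C = \CAlg_k^{\mathrm{op}}$), the restriction functor $i^*: \mP(\mathcal C_{\kappa'}) \to \mP(\mathcal C_\kappa)$ on presheaf categories has a left adjoint $i_!$ given by left Kan extension. I would define the candidate extension of a sheaf $F$ on $\mathcal C_\kappa$ to be $a \circ i_! F$, where $a$ denotes sheafification on $\mathcal C_{\kappa'}$. As a composite of left adjoints, this preserves all colimits automatically, and its right adjoint is the restriction functor to sheaves on $\mathcal C_\kappa$.

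The main technical step is to show that $i_! F$ is already a sheaf on $\mathcal C_{\kappa'}$ whenever $F$ is one on $\mathcal C_\kappa$, so that no sheafification is actually required. Granted this, the fully faithfulness assertion follows because $i^* i_! F \simeq F$ holds on the nose for left Kan extension along a fully faithful functor, making the counit of the induced adjunction on sheaves an isomorphism. The essential input is a cover-refinement lemma: every cover of a $\kappa'$-small object admits a refinement by data coming from $\mathcal C_\kappa$. For profinite sets in the effective epimorphism topology this follows from the dual fact that every $\kappa'$-small profinite set is a $\kappa'$-small cofiltered limit of finite ones, together with the finite-type nature of effective epi covers. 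For the fpqc topology this is precisely the content of Waterhouse's theorem: any faithfully flat $A$-algebra $B$ admits a faithfully flat $A$-subalgebra whose cardinality is bounded in terms of $|A|$, which keeps the refined cover within $\CAlg_{k, \kappa}$ whenever $A$ is $\kappa$-small.

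Preservation of $\kappa$-small limits is then a consequence of the pointwise Kan extension formula $(i_! F)(S) = \colim F(T)$, where the colimit ranges over the appropriate comma category of maps from $S$ to $\kappa$-small objects $T \in \mathcal C_\kappa$. Because $\mathcal C_\kappa$ is closed under $\kappa$-small limits (thanks to strong inaccessibility of $\kappa$), this index category is $\kappa$-filtered. Since $\kappa$-filtered colimits commute with $\kappa$-small limits in sets, and limits of sheaves are computed objectwise on the site, preservation follows. The main obstacle throughout is the cover-refinement step, and in particular Waterhouse's cardinality bound in the fpqc case; this bound has no direct analogue for more general sites and is the reason why strong inaccessibility of $\kappa$ (rather than merely a regularity hypothesis) is essential for the argument.
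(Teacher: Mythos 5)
The paper does not supply its own argument for this proposition --- it cites \cite[Proposition 2.9]{Condensed} for the condensed half and \cite[Lemmas 12.15, A.8]{Dirac2} for the fpqc half --- and your sketch recovers exactly the strategy of those references: left Kan extend on presheaves, use a cover-refinement input (the finite-quotient structure of surjections of profinite sets on one side; Waterhouse's cardinality bound for faithfully flat extensions, which the paper itself flags as the essential ingredient, on the other) to show the extension already satisfies descent so that no sheafification is needed, deduce full faithfulness from $i^*i_!\simeq\mathrm{id}$, and obtain preservation of $\kappa$-small limits from $\kappa$-filteredness of the relevant comma category, which in turn uses closure of the small site under $\kappa$-small (co)limits and hence strong inaccessibility of $\kappa$. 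One small terminological slip worth flagging: $i^*i_!F\simeq F$ is the statement that the \emph{unit} of the adjunction $i_!\dashv i^*$ is an isomorphism, which is what is equivalent to full faithfulness of the left adjoint $i_!$; the counit $i_!i^*\to\mathrm{id}$ is generally not invertible here, since $i^*$ is not fully faithful.
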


\begin{definition}\label{Def of big cats}
Let $k$ be a ring. The categories of \textit{condensed sets} and \textit{fpqc sheaves over $k$} are the filtered colimits of categories
$$
\mathrm{Cond} \coloneq\varinjlim_\kappa \mathrm{Cond}_\kappa,\qquad\qquad \mathrm{Shv}_k^\mathrm{fpqc} \coloneq\varinjlim_\kappa \mathrm{Shv}_{k, \kappa}^\mathrm{fpqc}
$$
over the filtered poset of strongly inaccessible cardinals.
\end{definition}

\begin{remark}
That is to say, a condensed set of fpqc sheaf is always left Kan extended from a $\kappa$-version for some strongly inaccessible cardinal $\kappa$, so for any practical considerations, we may work in the Grothendieck topoi $\mathrm{Cond}_\kappa$ and $\mathrm{Shv}_{k,\kappa}^\mathrm{fpqc}$, even though the entire categories $\mathrm{Cond}$ and $\mathrm{Shv}_k^\mathrm{fpqc}$ are too big to be topoi. This is purposefully analogous to the usual way of avoiding set-theoretic issues with the category of sets $\mathrm{Set}$ in practice.
\end{remark}

The categories of Definition \ref{Def of big cats} are by \cite[Footnote 5]{Condensed} and \cite[Remark 2.16]{Dirac2} equivalent to the full subcategories of the categories of sheaves on the respective big sites
$$
\mathrm{Cond}\subseteq \mathrm{Shv}(\mathrm{ProFin}, \tau_\mathrm{eff}), \qquad \quad \mathrm{Shv}^\mathrm{fpqc}_k\subseteq \mathrm{Shv}(\CAlg_k^\mathrm{op}, \tau_\mathrm{fpqc}),
$$
spanned by those sheaves whose underlying presheaves are accessible.
Let $\mP_\mathrm{acc}(\mC)\subseteq \mP(\mC)$ denote the subcategory  of accessible presheaves on a not-necessarily-small category $\mC$. By the argument of \cite[Corollary 2.17]{Dirac2}, the inclusions of subcategories of sheaves into accessible presheaves extend to adjunctions
$$
(-)^\dagger:\mP_\mathrm{acc}(\mathrm{ProFin})\rightleftarrows \mathrm{Cond}, \qquad \quad (-)^\dagger:\mP_\mathrm{acc}(\CAlg_k^\mathrm{op})\rightleftarrows \mathrm{Shv}^\mathrm{fpqc}_k.
$$
We abusively refer to the left adjoints $(-)^\dagger$ as sheafification. This is in part justified by the following remark.

\begin{remark}\label{compatibility of sheafification}
Let $\kappa$ be a strongly inaccessible cardinal.
By precomposing the tentative sheafification functors discussed above with the canonical fully faithful embeddings to their respective $\kappa$-variants from Definition \ref{Def of topoi}, we find by unwinding the definitions that we recover the genuine sheafification. That is to say, the diagrams of categories
$$
\begin{tikzcd}
\mP(\mathrm{ProFin}_\kappa)\ar{r}{(-)^\dagger}\ar{d} & \mathrm{Cond}_\kappa \ar[d]\\
\mP_\mathrm{acc}(\mathrm{ProFin}) \ar{r}{(-)^\dagger} & \mathrm{Cond}
\end{tikzcd} \qquad\quad
\begin{tikzcd}
\mP(\CAlg_{k,\kappa}^\mathrm{op})\ar{r}{(-)^\dagger}\ar[d] & \mathrm{Shv}^\mathrm{fpqc}_{k,\kappa} \ar[d]\\
\mP_\mathrm{acc}(\CAlg_k^\mathrm{op}) \ar{r}{(-)^\dagger} & \mathrm{Shv}^\mathrm{fpqc}_{k}
\end{tikzcd}
$$
commute, where the upper horizontal arrows are the topos-level sheafifications and the vertical arrows are given by left Kan extension. 
\end{remark}

\subsection{The main theorem}\label{Subsection 2.2}
Having recalled the setup for both condensed sets and fpqc sheaves, we arrive at our main result: extending Stone duality to a fully faithful embedding of condensed sets into fpqc sheaves. So far as we are aware, this seems to be a new observation even in the case of a separably closed field $k=k^s$ where it by Proposition \ref{proetale site} concerns sheaves on the pro-\'etale site.

\begin{theorem}\label{FF on topoi}
Let $k$ be a field.
The composite of Stone duality of Theorem \ref{Stone theorem} and the subcategory inclusion
$$
\mathrm{ProFin}\simeq (\CAlg^\mathrm{Stone}_{k})^\mathrm{op}\subseteq \CAlg_{k}^\mathrm{op}
$$
gives rise to an adjunction
$$
f^* :\mathrm{Cond}\rightleftarrows \mathrm{Shv}_k^\mathrm{fpqc} :f_*
$$
whose left adjoint $f^*$ is fully faithful and commutes with all finite limits.
\end{theorem}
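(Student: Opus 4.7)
The plan is to construct the adjunction by site comparison and then verify both properties using the structure of the pearl functor. Regarding the set-theoretic nuances of Subsection~\ref{Subsection 2.1}, I would first work at a fixed strongly inaccessible cardinal $\kappa$ where $\mathrm{Cond}_\kappa$ and $\mathrm{Shv}^{\mathrm{fpqc}}_{k,\kappa}$ are bona fide Grothendieck topoi, construct everything there, and then pass to the full categories via Proposition~\ref{up the beanstalk}.

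The adjunction itself arises from the inclusion $u\colon \CAlg_k^\mathrm{Stone} \hookrightarrow \CAlg_k$, which by Proposition~\ref{site preservation lemma} (combined with the identification of sites from Proposition~\ref{Prop1}) is a continuous morphism of sites. Restriction along $u$ sends fpqc sheaves to condensed sets, giving $f_*$; its left adjoint $f^*$ is the sheafified left Kan extension. Because $u$ has a right adjoint $v=(-)^\circ$ (Proposition~\ref{Pearl exists}), the Kan extension formula collapses to $(\mathrm{Lan}_u X)(B) = X(B^\circ)$, so $f^*X$ is precisely the fpqc sheafification of $B \mapsto X(B^\circ)$. Preservation of finite limits is then immediate: precomposition $X \mapsto X\circ(-)^\circ$ commutes with pointwise limits, and fpqc sheafification is lex by the general theory of Grothendieck topoi, so $f^*$ is left-exact.

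The heart of the proof is full faithfulness, equivalent to the unit $X \to f_*f^*X$ being an isomorphism, i.e.\ $(f^*X)(A) = X(A)$ for every Stone algebra $A$. My key intermediate claim is that the presheaf $B \mapsto X(B^\circ)$ already satisfies the fpqc sheaf condition at every Stone $A$. Given an fpqc cover $A \to B$ with $A$ Stone, Proposition~\ref{site preservation lemma} yields a cover $A = A^\circ \to B^\circ$ of Stone algebras, so the sheaf property of $X$ provides
\[
X(A) \;=\; \mathrm{eq}\bigl(X(B^\circ) \rightrightarrows X(B^\circ \otimes_A B^\circ)\bigr).
\]
Since $A$ is Stone, Proposition~\ref{Pearl and products} supplies a faithfully flat comparison $B^\circ \otimes_A B^\circ \to (B \otimes_A B)^\circ$; applying the sheaf $X$ yields an injection $X(B^\circ \otimes_A B^\circ) \hookrightarrow X((B \otimes_A B)^\circ)$. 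Because the parallel maps $X(B^\circ) \rightrightarrows X((B \otimes_A B)^\circ)$ factor through this injection, the two equalizers coincide, and the sheaf condition for $\mathrm{Lan}_u X$ holds at $A$. The standard fact that sheafification preserves values at objects satisfying the sheaf condition (the plus construction stabilizes) then delivers $(f^*X)(A) = X(A)$.

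The main obstacle is that the pearl does not commute with tensor products in general (Example~\ref{Idem no prod}), which prevents a naive transport of the sheaf condition on $X$ to an fpqc sheaf condition on $\mathrm{Lan}_u X$ over all of $\CAlg_k$. Proposition~\ref{Pearl and products}, whose faithful-flat comparison is available precisely when the base is Stone, is what rescues the equalizer matching at exactly the objects where we need the sheaf condition for full faithfulness.
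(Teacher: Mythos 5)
Your overall strategy coincides with the paper's: construct the adjunction at the level of $\kappa$-small sites via the pearl/forgetful adjunction and Proposition~\ref{site preservation lemma}, obtain left exactness from lex sheafification, and verify full faithfulness by matching equalizers at Stone algebras using Proposition~\ref{Pearl and products}. Your equalizer computation — that $B\mapsto X(B^\circ)$ satisfies the sheaf condition at every Stone algebra $A$ — is correct and is the same manipulation as in the paper.

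However, there is a genuine gap at the final step. The ``standard fact'' you invoke — that sheafification preserves the value of a presheaf at objects where it already satisfies the sheaf condition — is false in general. Sheafification is two iterations of the plus construction, $F^\dagger = F^{++}$. Satisfying the sheaf condition at $A$ guarantees $F^+(A) = F(A)$, but says nothing about $F^{++}(A)$, because the second plus construction at $A$ evaluates $F^+$ on covers $B$ of $A$, and $F^+(B)$ may already differ from $F(B)$ if $F$ fails to be separated at those non-Stone objects. (It is easy to cook up a three-object site where $F$ satisfies descent at the terminal object but $F^{++}$ still changes its value there.) What rescues the argument — and what the paper proves explicitly — is that $F = X\circ(-)^\circ$ is a \emph{separated} presheaf on all of $\CAlg_{k,\kappa}$: for any faithfully flat $A\to B$, Proposition~\ref{site preservation lemma} gives a faithfully flat $A^\circ\to B^\circ$, and separatedness of the sheaf $X$ forces $X(A^\circ)\to X(B^\circ)$ to be injective. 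Once $F$ is known to be separated, $F^\dagger = F^+$, and your computation of $F^+(A)$ at Stone $A$ then closes the argument. This missing separatedness check is short and uses tools you already have in hand, but it is not optional: without it, the invocation of the plus construction ``stabilizing'' has no justification.
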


\begin{proof}
The adjunction between the forgetful functor and the pearl functor gives rise to an adjunction on the opposite categories
\begin{equation}\label{Plain adj}
(-)^\circ : \CAlg_{k}^\mathrm{op}\rightleftarrows
(\CAlg_{k}^\mathrm{Stone})^\mathrm{op} :U.
\end{equation}
The pearl functor preserves $\kappa$-smallness for any cardinal $\kappa$ thanks to Proposition \ref{Pearl as a subring}, as does evidently the forgetful functor. By passing to the subcategories of $\kappa$-small rings on both sides, and then to presheaves, we obtain an adjunction
\begin{equation}\label{Contravar adj}
((-)^\circ)^*:\mP\big((\CAlg_{k, \kappa}^\mathrm{Stone})^\mathrm{op}\big)\rightleftarrows\mP(\CAlg_{k, \kappa}^\mathrm{op}) : U^*.
\end{equation}
We can compose this on the right with the sheafification adjunction of the $\kappa$-fpqc site
$$
(-)^\dagger:\mP(\CAlg_{k, \kappa}^\mathrm{op})\rightleftarrows\mathrm{Shv}^\mathrm{fpqc}_{k, \kappa},
$$
and since the functor $U^*$ preserves fpqc sheaves by Proposition \ref{site preservation lemma}, we obtain an adjunction on the level of sheaves
\begin{equation}\label{construction of geometric morphism}
(((-)^\circ)^*)^\dagger :\mathrm{Shv}\big((\CAlg_{k, \kappa}^\mathrm{Stone})^\mathrm{op}, \tau_\mathrm{fpqc}\big)
\rightleftarrows
 \mathrm{Shv}^\mathrm{fpqc}_{k, \kappa} : U^*.
\end{equation}
In fact, the left adjoint preserves finite limits, so this is a geometric morphism of topoi. Indeed, we have followed the standard procedure for obtaining a geometric morphism from a morphism of small sites, see for instance \cite[Theorem VII.10.4]{Mac Lane-Moerdijk}.

We will use Stone duality to identify the domain of the geometric morphism in question with $\kappa$-condensed sets.
If $\kappa$ is a strong limit cardinal for which $k$ is $\kappa$-small, we have the cardinality bound $|k^X| = |k|^{|X|}<\kappa$ where $X$ is any $\kappa$-small set. Since both functors of Stone duality are given as certain kinds of morphisms into $k$, this shows that  Stone duality therefore restricts to an equivalence on the subcategories of $\kappa$-small objects
$$
\mathrm{ProFin}_\kappa\simeq (\CAlg_{k,\kappa}^\mathrm{Stone})^\mathrm{op}.
$$
According to Proposition \ref{Prop1}, Stone duality is an equivalence of sites as well. We may therefore use it to identify $\kappa$-condensed sets  with sheaves on $\kappa$-small Stone $k$-algebras, or more precisely
$$
\mathrm{Cond}_\kappa\simeq \mathrm{Shv}\big((\CAlg_{k, \kappa}^\mathrm{Stone})^\mathrm{op}, \tau_\mathrm{fpqc}\big).
$$
In light of this equivalence of categories, the geometric morphism \eqref{construction of geometric morphism} may be written as
$$
f^* :\mathrm{Cond}_\kappa
\rightleftarrows
 \mathrm{Shv}^\mathrm{fpqc}_{k, \kappa} : f_*.
$$
By passing to the limit over strongly inaccessible cardinals $\kappa$ as in Proposition \ref{up the beanstalk}, this gives rise to the desired adjunction. Since the transition maps for different $\kappa$ preserve all colimits and finite limits by Proposition \ref{up the beanstalk}, the limit and colimit preservation claims concerning $f^*$ and $f_*$ follow.

To show that the functor $f^*:\mathrm{Cond}\to \mathrm{Shv}_k^\mathrm{fpqc}$ is fully faithful, it suffices to show that its restriction to the $\kappa$-small variants of the categories in question is for an arbitrary strongly inaccessible cardinal $\kappa$. That is to say, we need to show that the left adjoint of \eqref{construction of geometric morphism} is fully faithful. This is equivalent to the unit of the adjunction \eqref{construction of geometric morphism} being an isomorphism.
The unit in questions sends a functor $X:\CAlg_{k, \kappa}^\mathrm{Stone}\to \mathrm{Set}$ to the restriction to Stone $k$-algebras of the $\kappa$-fpqc sheafififcation $(X\circ (-)^\circ)^\dagger.$

We claim that the composite $X\circ(-)^\circ:\CAlg_{k, \kappa}\to \mathrm{Set}$ is a separated presheaf. To see that, let $A\to B$ be a faithfully flat map of $\kappa$-small $k$-algebras. Then $A^\circ\to B^\circ$ is also faithfully flat by Proposition \ref{site preservation lemma}. Since $X$ is a sheaf, it is in particular a separated presheaf, which means that the induced map $X(A^\circ)\to X(B^\circ)$ is injective. Thus the presheaf $X\circ (-)^\circ$ is separated.
As a separated presheaf, its sheafification is computed by the Grothendieck plus construction (in which we can further restrict to single-element covers thanks to $X\circ (-)^\circ$ commuting with finite products - indeed, the pearl functor is a right adjoint)
$$
(X\circ (-)^\circ)^\dagger(A) = \varinjlim_{B\in \mathrm{CAlg}_{A, \kappa}^\mathrm{fp}}
\operatorname{Eq}\big(X(B^\circ)\rightrightarrows X((B\otimes_A B)^\circ)\big),
$$
where $\mathrm{CAlg}_{A, \kappa}^\mathrm{fp}$ denotes the category of all $\kappa$-small faithfully flat $A$-algebras.
We only need to consider the restriction of this sheaf to Stone $k$-algebras, so we assume from now on that $A\in\CAlg_{k, \kappa}^\mathrm{Stone}$.
In that case, Lemma \ref{Pearl and products} shows that  the parallel arrows in the equalizer above factor as
$$X(B^\circ)\rightrightarrows X(B^\circ\otimes_AB^\circ)\to X((B\otimes_A B)^\circ).$$
The single arrow in this diagram is, again by Lemma \ref{Pearl and products}, induced by a faithfully flat map $B^\circ\o_A B^\circ\to (B\o_A B)^\circ$.  Since $X$ is a sheaf and therefore also a separated presheaf, it follows that the single arrow in question is injective and as such does not affect the equalizer. Therefore  we have
\begin{eqnarray*}
(X\circ (-)^\circ)^\dagger(A) &=& \varinjlim_{B\in \mathrm{CAlg}_{A, \kappa}^\mathrm{fp}}\operatorname{Eq}\big(X(B^\circ)\rightrightarrows X((B\otimes_A B)^\circ)\big)\\
 &=& \varinjlim_{B\in \mathrm{CAlg}_{A, \kappa}^\mathrm{fp}}\operatorname{Eq}\big(X(B^\circ)\rightrightarrows X(B^\circ\o_A B^\circ)\big)\\
  &=& \varinjlim_{B\in \mathrm{CAlg}_{A, \kappa}^\mathrm{fp}}X(A) \\
  &=& X(A),
\end{eqnarray*}
in which we have used that $A =A^\circ\to B^\circ$ is faithfully flat by Proposition \ref{site preservation lemma}. By chasing through the definitions, we see that the so-obtained isomorphism is precisely the unit $X\mapsto (X\circ (-)^\circ)^\dagger\circ U$ of adjunction \eqref{construction of geometric morphism}.
\end{proof}

\begin{remark}
Let $\mC$ be a category which has all small limits. Then for any site $(\mD, \tau)$, the category of $\mC$-valued sheaves on $\mD$ may be identified with the full subcategory in $\Fun(\mathrm{Shv}(\mD, \tau), \mC)$ of functors which preserve all finite limits. Using this observation, we can extend the conclusion of Theorem \ref{FF on topoi} for any uncountable strong limit cardinal $\kappa$ to an adjunction between $\kappa$-condensed objects in $\mC$ and $\mC$-valued $\kappa$-fpqc sheaves over $k$. By further passing to the filtered colimit along strongly inaccessible cardinals $\kappa$ to adjunction between the (evidently-defined; see \cite[Appendix to Lecture II]{Condensed} for the condensed case) categories
$$
f^* : \mathrm{Cond}(\mC)\rightleftarrows \mathrm{Shv}^\mathrm{fpqc}_{k}(\mC) : f_*,
$$
whose left adjoint is again fully faithful.
\end{remark}

\begin{remark}\label{sheafification formula}
By passing to the colimit over strongly inaccessible cardinals $\kappa$ from the presheaf-level adjunction for $\kappa$-variants \eqref{Contravar adj}, we obtain by \cite[Proposition A.2]{Dirac2} an adjunction on the level of accessible presheaves
$$
((-)^\circ)^*: \mP_\mathrm{acc}\big((\CAlg_k^\mathrm{Stone})^\mathrm{op}\big)\rightleftarrows \mP_\mathrm{acc}(\CAlg_k^\mathrm{op}) : U^*.
$$
It follows from the construction that this is the restriction to accessible presheaves of the adjunction that \eqref{Plain adj} induces on all presheaves. The adjunction from the statement of Theorem \ref{FF on topoi} is therefore induced from this one by passage to the subcategory of sheaves. In particular, the left adjoint $f^*:\mathrm{Cond}\to \mathrm{Shv}_k^\mathrm{fpqc}$ can be described without passing to $\kappa$-small versions as the composite
$$
\mathrm{Cond}\subseteq \mP_\mathrm{acc}(\mathrm{ProFin})\simeq \mP_\mathrm{acc}\big((\CAlg_k^\mathrm{Stone})^\mathrm{op}\big)\xrightarrow{((-)^\circ)^*}\mP_\mathrm{acc}(\CAlg_k^\mathrm{op})\xrightarrow{(-)^\dagger}\mathrm{Shv}_k^\mathrm{fpqc}.
$$
The sheafification description of the functor $f^*$ can thus also be phrased in terms of sheafification of accessible presheaves, and the compatibility of this description with the $\kappa$-version is a consequence of Remark \ref{compatibility of sheafification}.
\end{remark}

\subsection{The p\'etra space, enhanced \texorpdfstring{$k$}{k}-points, and Stone core}\label{Subsection 2.3}
\textit{Throughout this section, unless explicitly stated otherwise,  let $k$ be an arbitrary fixed field.}
Let us adopt the following more evocative notation for the functors coming from Theorem \ref{FF on topoi}.

\begin{definition}
Let $X$ be a condensed set and $\mX$ an fpqc sheaf $\mX$ over $k.$
\begin{itemize}
\item The  \textit{enhanced $k$-points on $\mX$} is the condensed set $$\underline{\mX(k)} \coloneq f_*(\mX).$$
\item The \textit{p\'etra space of $X$} is the fpqc sheaf over $k$  $$X_k^\mathrm{p\acute{e}t} \coloneq f^*(X).$$
\item The \textit{Stone core} (or \textit{petrifaction}) \textit{of $\mX$} is the fpqc sheaf over $k$ $$\mX^\mathrm{Stone} \coloneq f^*f_*(\mX).$$
\end{itemize}
\end{definition}

The goal of this subsection is to unpack the definition of these functors, give more explicit constructions of them, and try to understand them for some examples.

\subsubsection{The enhanced $k$-points of an fpqc sheaf}\label{No field subsection}

\begin{prop}\label{How enh works}
The enhanced $k$-points of an fpqc sheaf $\mX$ over $k$ is the condensed set $\underline{\mX(k)}$ whose value on any $S\in \mathrm{ProFin}$ is isomorphic to
$$
\underline{\mX(k)}(S)\simeq\mX(\CMcal C(S, k)).
$$
In particular, its set of points is $\underline{\mX(k)}(*) \simeq \mX(k)$, the usual set of $k$-points of $\mX$.
\end{prop}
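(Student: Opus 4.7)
The plan is to unpack the construction of the right adjoint $f_*$ from the proof of Theorem \ref{FF on topoi} and then translate through Stone duality. Fix an fpqc sheaf $\mX$ over $k$ and a profinite set $S$, and choose a strongly inaccessible cardinal $\kappa$ large enough that both are $\kappa$-small. At this $\kappa$-level, the right adjoint in the adjunction \eqref{construction of geometric morphism} is the functor $U^*$ given by precomposition with the forgetful functor $U : \CAlg_{k,\kappa}^\mathrm{Stone} \to \CAlg_{k,\kappa}$. Because $U$ preserves fpqc covers by Proposition \ref{site preservation lemma}, no sheafification is required on this side: $U^*$ is literal restriction, so $(U^*\mX)(A) = \mX(A)$ for any Stone $k$-algebra $A$.

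Next I would transport this across Stone duality. Under the equivalence $\mathrm{ProFin}_\kappa \simeq (\CAlg_{k,\kappa}^\mathrm{Stone})^\mathrm{op}$ of Theorem \ref{Stone theorem}, upgraded to an equivalence of sites by Proposition \ref{Prop1} so as to identify the corresponding categories of sheaves, a $\kappa$-small profinite set $S$ corresponds to the Stone $k$-algebra $\CMcal C(S, k)$. Combining with the preceding step, the condensed set $f_*(\mX)$ evaluated at $S$ is $(U^*\mX)(\CMcal C(S, k)) = \mX(\CMcal C(S, k))$, which is the claimed formula. Specializing to the one-point profinite set $S = *$ we have $\CMcal C(*, k) = k$, which yields the final assertion $\underline{\mX(k)}(*) \simeq \mX(k)$.

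To upgrade from the $\kappa$-variant to the full categories $\mathrm{Cond}$ and $\mathrm{Shv}_k^\mathrm{fpqc}$, I would invoke Proposition \ref{up the beanstalk}: any fpqc sheaf is left Kan extended from some $\kappa$-variant, the transition functors between different strongly inaccessible cardinals are fully faithful, and the right-hand side $\mX(\CMcal C(S, k))$ is stable under enlarging $\kappa$ because $\CMcal C(S, k)$ lies in $\CAlg_{k,\kappa}^\mathrm{Stone}$ as soon as $S$ lies in $\mathrm{ProFin}_\kappa$. The proof is thus essentially bookkeeping; the main potential pitfall is mere consistency — ensuring that the Stone duality equivalence is applied in the direction matching the convention $S \mapsto \CMcal C(S, k)$, and that the $\kappa$-level adjoint pair assembles coherently in the colimit over inaccessible cardinals. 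No new mathematical input beyond Theorem \ref{FF on topoi} and the Stone duality framework of Section \ref{Section 1} is required.
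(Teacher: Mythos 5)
Your proposal is correct and follows essentially the same approach as the paper: identify $f_*(\mX)$ with the restriction $\mX\vert_{\CAlg_k^\mathrm{Stone}}$ (i.e.\ $U^*\mX$), then transport through the Stone duality $S \mapsto \CMcal C(S,k)$ of Theorem \ref{Stone theorem}. The paper states this more tersely, leaving the cardinal bookkeeping and the ``no sheafification needed'' observation implicit, but the underlying argument is identical.
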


\begin{proof}
In the proof of Theorem \ref{FF on topoi}, the relevant functor $f_*(\mX) =\underline{\mX(k)}$ is given by the restriction $\mX\vert_{\CAlg^\mathrm{Stone}_{k}}$. Since according to Theorem \ref{Stone theorem}, the Stone duality between profinite sets and Stone $k$-algebras sends a profinite set $S$ to the $k$-algebra  of continuous functions $\CMcal C (S, k)$, this amounts to the desired formula.
\end{proof}

\begin{exun}\label{enh of A}
The affine line $\mathbf A^1_k$ corresponds, via the functor of points perspective, to the forgetful functor $\CAlg_k\to \mathrm{Set}$. Its enhanced $k$-points $\underline{\mathbf A^1_k(k)}$ are therefore given by the profinite set $S\mapsto \CMcal C(S, k)$, which is nothing but $k$ itself, viewed as a discrete condensed set. The enhanced $k$-points functor $\mX\mapsto \underline{\mX(k)}$ is a right adjoint and as such preserves all limits. It follows that it sends the affine $n$-space $\mathbf A^n_k = \mathbf A^1_k\times\cdots \times \mathbf A^1_k$ into the constant profinite set $\underline{\mathbf A^n_k(k)} = k^n$.
\end{exun}

Though Theorem \ref{FF on topoi} only exhibits the set of $k$-points $\mX(k)$ as the underlying set of a condensed set $\underline{\mX(k)}$, it is in fact possible to extend this construction to $A$-points $\mX(A)$ for any $A\in\CAlg_{k}$.

\begin{prop}\label{Condensed enhancement}
Let $\mX$ be an fpqc sheaf over a ring $k$. It can be canonically extended to a functor $\underline{\mX} : \CAlg_{k}\to\mathrm{Cond}$ such that
\begin{itemize}
\item The $k$-points of $\underline{\mX}$ are the enhanced $k$-points of $\mX$, i.e.  $\underline{\mX}(k) = \underline{\mX(k)}$.
\item The post-composite of $\underline{\mX}$ with the underlying set functor $\mathrm{Cond}\to \mathrm{Set}$, $X\mapsto X(*)$, recovers the functor $\mX :\CAlg_{k}\to \mathrm{Cond}$. 
\end{itemize}
\end{prop}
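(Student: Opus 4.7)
The plan is to construct $\underline{\mX} : \CAlg_k \to \mathrm{Cond}$ by the explicit formula
$$
\underline{\mX}(A)(S) \coloneq \mX(\CMcal C(S, A)), \qquad A \in \CAlg_k,\ S \in \mathrm{ProFin},
$$
where $\CMcal C(S, A) \coloneq A \otimes_k \CMcal C(S, k) \simeq \varinjlim_i A^{S_i}$ for any presentation $S = \varprojlim_i S_i$ as a cofiltered limit of finite sets. Functoriality in $S \in \mathrm{ProFin}^{\mathrm{op}}$ and in $A \in \CAlg_k$ is built into the formula, and both stated compatibilities are then immediate: at $A=k$ one recovers $\underline{\mX}(k)(S) = \mX(\CMcal C(S, k)) = \underline{\mX(k)}(S)$ by Proposition \ref{How enh works}, and at $S = *$ one gets $\underline{\mX}(A)(*) = \mX(\CMcal C(*, A)) = \mX(A)$, i.e.\ the usual functor of points.

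The substantive content is checking that $\underline{\mX}(A)$ really is a sheaf for the effective epimorphism topology on $\mathrm{ProFin}$. Since $\mathrm{ProFin}$ has finite coproducts and this topology is generated by finite jointly surjective families, the condition reduces to descent along a single surjection $T \to S$ of profinite sets. By Lemma \ref{Lemma ff for Stone} such a surjection corresponds to a faithfully flat morphism $\CMcal C(S, k) \to \CMcal C(T, k)$ of Stone $k$-algebras, and because faithful flatness is preserved under base change, tensoring over $k$ with $A$ yields a faithfully flat morphism $\CMcal C(S, A) \to \CMcal C(T, A)$ in $\CAlg_k$. The compatibility one needs for the \v{C}ech nerves to match is that $A \otimes_k (-)$ commutes with relative tensor products, which gives
$$
\CMcal C\bigl(\underbrace{T \times_S \cdots \times_S T}_{n}, A\bigr) \simeq \underbrace{\CMcal C(T, A) \otimes_{\CMcal C(S, A)} \cdots \otimes_{\CMcal C(S, A)} \CMcal C(T, A)}_{n}.
$$
Feeding this fpqc cover into the sheaf property of $\mX$ produces the required equalizer diagram for $\underline{\mX}(A)$, establishing the sheaf condition.

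The main obstacle I anticipate is not structural but size-theoretic: for a profinite set $S$ of large cardinality, $\CMcal C(S, A)$ will itself be large, so one must verify that $\underline{\mX}$ lands in the honest category $\mathrm{Cond}$ of Definition \ref{Def of big cats} rather than merely in large presheaves on $\mathrm{ProFin}$. The resolution is exactly as in the proof of Theorem \ref{FF on topoi}: perform the construction on $\kappa$-small subcategories for a strongly inaccessible $\kappa$ large enough to resolve both $\mX$ and $A$, and then pass to the colimit over $\kappa$ invoking Proposition \ref{up the beanstalk} to obtain the desired functor $\underline{\mX} : \CAlg_k \to \mathrm{Cond}$.
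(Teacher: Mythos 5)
Your proof is correct, but it takes a genuinely different route from the paper's. The paper defines $\underline{\mX}(A) \coloneq \underline{\mathrm{Res}_{A/k}(\mX_A)(k)}$, where $\mathrm{Res}_{A/k}$ is Weil restriction: this automatically lands in $\mathrm{Cond}$ because the enhanced $k$-points functor $f_*$ does, so no sheaf condition needs to be re-verified, and functoriality in $A$ comes from natural maps $\mathrm{Res}_{A/k}(\mX_A)\to\mathrm{Res}_{B/k}(\mX_B)$. Your explicit formula $\underline{\mX}(A)(S) = \mX(\CMcal C(S,A))$ is exactly what the paper later extracts in the remark following the proposition, but you take it as the definition and then check the effective-epimorphism sheaf condition by hand. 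Your verification is sound: the reduction via Lemma \ref{Lemma ff for Stone} (which holds over any ring, not only a field) to faithful flatness of $\CMcal C(S,k)\to\CMcal C(T,k)$, base change along $-\otimes_k A$ preserving faithful flatness, and the \v{C}ech-nerve match via commutation of $A\otimes_k(-)$ with relative tensor products all go through. Two small points worth making explicit: (i) the reduction to a single surjection also requires noting $\underline{\mX}(A)(\emptyset)=\mX(0)=\ast$ and $\underline{\mX}(A)(S\sqcup T)=\mX(\CMcal C(S,A)\times\CMcal C(T,A))=\mX(\CMcal C(S,A))\times\mX(\CMcal C(T,A))$, both of which follow from $\mX$ being an fpqc sheaf; (ii) although the proposition's statement says ``ring $k$,'' the first bullet invokes $\underline{\mX(k)}=f_*(\mX)$, which per the standing convention of that subsection lives over a field. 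The trade-off between the two approaches: the paper's is shorter once the $f^*\dashv f_*$ machinery is in place, while yours is more self-contained and transparently produces the formula that one actually uses in practice.
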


\begin{proof}
For any fixed $\kappa$-small $k$-algebra $A$, there is a canonical adjunction
$$
(-)_A :\mathrm{Shv}_{k}^\mathrm{fpqc}\rightleftarrows \mathrm{Shv}_{A}^\mathrm{fpqc} :\mathrm{Res}_{A/k},
$$
where $\mX_A = \mX\times_{\Spec(k)}\Spec(A)$ is the base-change (equivalently, it is the restriction of $\mX:\CAlg_{k}\to \mathrm{Set}$ along the forgetful functor $\CAlg_{A}\to\CAlg_{k}$) and the Weil restriction $\mathrm{Res}_{A/k}(\mY) :\CAlg_{k}\to \mathrm{Set}$ is given by $B\mapsto \mY(A\otimes_k B)$. The comonad of this adjunction is the functor $\mX\mapsto \mathrm{Res}_{A/k}(\mX_A)$, which produces a fpqc sheaf with $k$-points
$$
\mathrm{Res}_{A/k}(\mX_A)(k) = \mX(A\otimes_k k)=\mX(A).
$$
For any $k$-algebra homomorphism $A\to B$, there is an natural map of fpqc sheaves $\mathrm{Res}_{A/k}(\mX_A)\to \mathrm{Res}_{B/k}(\mX_B)$ which induces on $k$-points the map $\mX(A)\to\mX(B)$. We can now use the enhanced $k$-points to define
$$
\underline{\mX}(A) \coloneq\underline{\mathrm{Res}_{A/k}(\mX_A)(k)},
$$
 which is functorial by the above discussion and clearly satisfies all the desiderata.
\end{proof}

\begin{remark}
It follows from the proof of Proposition \ref{Condensed enhancement} that the functor $\underline\mX$ is given explicitly as
$$
\underline \mX(A): S\mapsto \mX(\CMcal  C(S, k)\otimes_k A)
$$
for any  $k$-algebra $A$ and  small profinite set $S$. By writing $S=\varprojlim_i S_i$ for a filtered system of finite sets $S_i$, we find that
$$
\CMcal C(S, k)\otimes_k A = \varinjlim_i k^{S_i}\otimes_k A = \varinjlim_i A^{S_i} = \CMcal C(S, A).
$$
That expresses the $\kappa$-condensed enhancement $\underline \mX$ via the simpler formula
$$
\underline \mX(A) :S\mapsto \mX(\CMcal C(S, A)).
$$
It is immediate from this description that $\underline \mX$ is in fact a $\kappa$-fpqc sheaf of $\kappa$-condensed sets.
\end{remark}

\begin{remark}
The $\kappa$-condensed enhancement $\underline \mX$ of a $\kappa$-fpqc sheaf $\mX$, as furnished by Proposition \ref{Condensed enhancement} stands in contrast with the more naive way of extending $\mX:\CAlg_{k, \kappa}\to\mathrm{Set}$ to a functor into $\kappa$-condensed sets -- by post-composing with the inclusion $\mathrm{Set}\to\mathrm{Cond}_\kappa$ of discrete $\kappa$-condensed sets. 
\end{remark}

Recall that the \textit{de Rham space} of a functor $\mX : \CAlg_k\to \mathrm{Set}$ is the functor given by $\mX_\mathrm{dR}(A) \coloneq \mX(A^\mathrm{red})$, and it satisfies fpqc descent if $\mX$ does. It is the right adjoint of the reduction functor $\mX\mapsto \mX^\mathrm{red}$, extended along colimits from affines. Both functors clearly preserve $\mathrm{Shv}_{k, \kappa}^\mathrm{fpqc}$ for any choice of $\kappa$, thus their adjunction extends to $\mathrm{Shv}_k^\mathrm{fpqc}$

\begin{prop}\label{Enhanced of de Rham}
Let $\mX$ be an fpqc sheaf over $k$. The map of condensed sets
$$
\underline{\mX(k)}\to \underline{\mX_\mathrm{dR}(k)},
$$
inbduced by the canonical map of fpqc sheaves $\mX\to \mX_\mathrm{dR}$, is an isomorphism.
\end{prop}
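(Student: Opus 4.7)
The plan is to check the claim value-by-value on profinite sets, and reduce it to the observation that Stone $k$-algebras over a field $k$ are reduced.

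By Proposition \ref{How enh works}, for any profinite set $S$ the condensed sets $\underline{\mX(k)}$ and $\underline{\mX_\mathrm{dR}(k)}$ evaluate to
$$
\underline{\mX(k)}(S) \simeq \mX(\CMcal C(S, k)), \qquad \underline{\mX_\mathrm{dR}(k)}(S) \simeq \mX_\mathrm{dR}(\CMcal C(S, k)) = \mX(\CMcal C(S, k)^\mathrm{red}),
$$
and the map in question is induced by the canonical quotient $\CMcal C(S, k) \to \CMcal C(S, k)^\mathrm{red}$. It therefore suffices to prove that the Stone $k$-algebra $\CMcal C(S, k)$ is already reduced, so that this quotient map is the identity.

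To verify reducedness, I would offer two equivalent routes. Writing $S \simeq \varprojlim_i S_i$ as a filtered limit of finite sets, Stone duality identifies $\CMcal C(S, k) \simeq \varinjlim_i k^{S_i}$. Each finite product $k^{S_i}$ is reduced because $k$ is a field, and reducedness is preserved under filtered colimits of commutative rings. Alternatively, combining Proposition \ref{Stone implies abs flat} with Lemma \ref{ideals in abs flat} shows that every principal ideal in a Stone $k$-algebra is generated by an idempotent; if $a \in \CMcal C(S, k)$ is nilpotent, then $(a) = (e)$ for some idempotent $e$, but a nilpotent idempotent must vanish, forcing $a = 0$.

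There is no real obstacle here; the only thing to be slightly careful about is that the identification in Proposition \ref{How enh works} genuinely applies to both $\mX$ and $\mX_\mathrm{dR}$, which is immediate because $\mX_\mathrm{dR}$ is itself an fpqc sheaf over $k$ (as recalled just before the statement). Once reducedness of $\CMcal C(S, k)$ is in hand, the map $\underline{\mX(k)}(S) \to \underline{\mX_\mathrm{dR}(k)}(S)$ is the identity for every $S \in \mathrm{ProFin}$, so the map of condensed sets is an isomorphism.
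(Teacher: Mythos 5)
Your proof is correct, and it takes a more direct route than the paper's. The paper argues by Yoneda and adjunction: using the adjunction between $f^*$ and $f_*$ together with the one between $(-)^{\mathrm{red}}$ and $(-)_{\mathrm{dR}}$, it reduces the claim to showing that $f^*(X)^{\mathrm{red}} \to f^*(X)$ is an isomorphism for every condensed set $X$, then writes $f^*(X)$ as a colimit of $\Spec(A_i)$ with $A_i$ Stone and invokes the fact (via Proposition \ref{Is wetal} and the Stacks Project) that Stone $k$-algebras over a field are reduced. You instead unwind $f_*$ pointwise on profinite sets via Proposition \ref{How enh works}, immediately reducing to the reducedness of $\CMcal C(S,k)$, which you then verify directly. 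Both arguments hinge on the same final fact (reducedness of Stone $k$-algebras), but your reduction step is more elementary, avoiding the Yoneda detour; the paper's formulation has the slight advantage of making explicit the useful intermediate assertion that $f^*(X)^{\mathrm{red}} \to f^*(X)$ is an isomorphism, i.e.\ that p\'etra spaces are reduced. Your two justifications for reducedness are both sound and are more self-contained than the paper's citation, with the absolute-flatness argument via Lemma \ref{ideals in abs flat} being a nice alternative. The one implicit step you should be aware of (but which is indeed automatic, as you note) is the naturality of the identification in Proposition \ref{How enh works} with respect to the map $\mX \to \mX_{\mathrm{dR}}$, which holds because $f_*$ is literally restriction along Stone duality.
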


\begin{proof}
Let $X$ be an arbitrary condensed set.
Recalling that $\underline{\mX(k)}=f_*(X)$, we find by using the adjunctions between $f^*$ and $f_*$ and between the reduction and the de Rham space a natural isomorphism
$$
\Hom_{\mathrm{Cond}}(X, \underline{\mX_\mathrm{dR}(k)})\, \simeq \, \Hom_{\mathrm{Shv}_k^\mathrm{fpqc}}(f^*(X)^\mathrm{red}, \mX).
$$
The desired claim thus reduces to showing that the canonical map $f^*(X)^\mathrm{red}\to f^*(X)$ is an isomorphism for any $X\in \mathrm{Cond}$.
The functor $f^*$ is a left adjoint and as such commutes with colimits. It follows that we may write $f^*(X)=\varinjlim_i \Spec(A_i)$ for some diagram of Stone $k$-algebras $A_i$. Since the reduction commutes with colimits as well, it suffices to show that any Stone $k$-algebra is reduced. That follows from \cite[\href{https://stacks.math.columbia.edu/tag/0CKR}{Lemma 0CKR}]{stacks-project} in light of Proposition \ref{Is wetal}.
\end{proof}

Since any infinitesimal extension $\mX\to \mY$ induces an isomorphism $\mX_\mathrm{dR}\simeq \mY_\mathrm{dR}$ on de Rham spaces, Proposition \ref{Enhanced of de Rham} asserts the insensitivity of enhanced $k$-points to nilpotent fuzz.

\subsubsection{The p\'etra space of a condensed set}

\begin{prop}\label{Stone space in terms of pi_0}
The p\'etra  $k$-space  $X^\mathrm{p\acute{e}t}_k:\CAlg_{k}\to\mathrm{Set}$  of a condensed set $X$ is the fpqc sheafification of the functor given for any $k$-algebra $A$ by 
$$
A\mapsto X(\pi_0(\left|\Spec(A)\right|)),
$$
where the set of connected components is equipped with its quotient topology. For any profinite set $S$, viewed as a condensed set, its Stone $k$-space is the affine $k$-scheme
$$
S^\mathrm{p\acute{e}t}_k \simeq \Spec (\CMcal C(S, k)).
$$
\end{prop}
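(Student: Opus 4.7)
The plan is to invoke Remark \ref{sheafification formula}, which expresses the left adjoint $f^*$ as the composite of three steps: (i) the identification $\mathrm{Cond}\subseteq\mP_\mathrm{acc}(\mathrm{ProFin})\simeq\mP_\mathrm{acc}\big((\CAlg_k^\mathrm{Stone})^\mathrm{op}\big)$ induced by Stone duality, followed by (ii) precomposition $((-)^\circ)^*$ with the pearl functor, followed by (iii) fpqc sheafification on $\CAlg_k^\mathrm{op}$. All the genuine content of the proposition is then packaged into correctly unpacking these three steps.

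First I would unpack step (i). By Theorem \ref{Stone theorem} and Proposition \ref{Stone as underspace}, Stone duality identifies a profinite set $S$ with the Stone $k$-algebra $\CMcal C(S, k)$, and conversely sends a Stone $k$-algebra $A$ to the profinite set $\left|\Spec(A)\right|$. Thus under this equivalence, a condensed set $X : \mathrm{ProFin}^\mathrm{op}\to\mathrm{Set}$ corresponds to the functor $\CAlg_k^\mathrm{Stone}\to\mathrm{Set}$ given by $A\mapsto X(\left|\Spec(A)\right|)$. Applying step (ii), i.e., precomposing with the pearl functor $(-)^\circ : \CAlg_k \to \CAlg_k^\mathrm{Stone}$, then yields the presheaf $A\mapsto X(\left|\Spec(A^\circ)\right|)$, which by Proposition \ref{Pearl as pi_0} simplifies to $A\mapsto X(\pi_0(\left|\Spec(A)\right|))$. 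Applying fpqc sheafification in step (iii) establishes the first claim.

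For the second claim, I would specialize the formula just derived to $X = S$, where $S$ is a profinite set viewed as the representable condensed set $\Hom_\mathrm{ProFin}(-, S)$. The resulting presheaf reads
$$A\mapsto \Hom_\mathrm{ProFin}\big(\pi_0(\left|\Spec(A)\right|), S\big),$$
which by the $\pi_0\dashv\mathrm{incl}$ adjunction of Construction \ref{Cons of pi_0} equals $\Hom_\mathrm{Top}(\left|\Spec(A)\right|, S)$. The fourth bijection in the proof of Proposition \ref{Pearl as pi_0} identifies this in turn with $\Hom_{\CAlg_k}(\CMcal C(S, k), A)$, i.e., with the affine $k$-scheme $\Spec(\CMcal C(S, k))$ viewed as a functor of points. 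Since representable affines satisfy fpqc descent, this presheaf is already a sheaf, no further sheafification is needed, and we conclude $S^\mathrm{p\acute{e}t}_k \simeq \Spec(\CMcal C(S, k))$.

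The main obstacle is essentially bookkeeping rather than conceptual: tracking the direction of Stone duality across opposite categories, correctly transporting the presheaf under the equivalence, and recognizing that in the case of a profinite set the resulting presheaf is already an fpqc sheaf so that no sheafification is needed. The substantive inputs --- Stone duality, the expression of $|\Spec(A)|$ for Stone $A$, and the computation of $|\Spec(A^\circ)|$ as $\pi_0(|\Spec(A)|)$ --- have all been established earlier in the paper.
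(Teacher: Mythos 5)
Your proof is correct and takes essentially the same approach as the paper: unpack the sheafification formula via the pearl functor and Proposition \ref{Pearl as pi_0} for the first claim, then observe the result is already a sheaf for profinite $S$. The only variation is in how you identify the presheaf in the second claim: the paper passes directly from $\Hom_{\CAlg_k^\mathrm{Stone}}(\CMcal C(S,k), A^\circ)$ to $\Hom_{\CAlg_k}(\CMcal C(S,k), A)$ using the universal property of the pearl, whereas you route through $\Hom_{\mathrm{Top}}(\left|\Spec(A)\right|, S)$ via the $\pi_0\dashv\mathrm{incl}$ adjunction and the scheme-to-topological-space bijection --- two equivalent traversals of the same chain of identifications established in the proof of Proposition \ref{Pearl as pi_0}.
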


\begin{proof}
The first part follows from the sheafification formula for $f^*$ from the proof of Theorem \ref{FF on topoi} in terms of the pearl functor (which is independent of the choice of $\kappa$ by Remark \ref{sheafification formula}), and Proposition \ref{Pearl as pi_0}. For the second part, we must show that
$$
A\mapsto \Spec(\CMcal C(S, k))(A^\circ) = \Hom_{\CAlg_k^\mathrm{Stone}}(\CMcal C(S, k), A^\circ)
$$
is a sheaf for the fpqc topology. By the universal property of the pearl we have
$$
 \Hom_{\CAlg_k^\mathrm{Stone}}(\CMcal C(S, k), A^\circ) = \Hom_{\CAlg_k}(\CMcal C(S, k), A) = \Spec(\CMcal C(S, k))(A),
 $$
 from which this being an fpqc sheaf is clear.
\end{proof}

\begin{prop}\label{Prop points unit}
Let $X$ be a condensed set. There is a natural isomorphism of condensed sets $X\simeq \underline{X^\mathrm{p\acute{e}t}_k(k)}$ of it with the enhanced $k$-points of its p\'etra space.
\end{prop}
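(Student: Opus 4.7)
The plan is to recognize this proposition as essentially a formal consequence of the full faithfulness assertion in Theorem~\ref{FF on topoi}. Unwinding the definitions from the start of this subsection, we have $X^\mathrm{p\acute{e}t}_k = f^*(X)$ and $\underline{\mathcal{Y}(k)} = f_*(\mathcal{Y})$, so
\[
\underline{X^\mathrm{p\acute{e}t}_k(k)} \;=\; f_*\bigl(X^\mathrm{p\acute{e}t}_k\bigr) \;=\; f_*f^*(X).
\]
The natural map $X \to \underline{X^\mathrm{p\acute{e}t}_k(k)}$ whose bijectivity we are asked to establish is therefore nothing other than the component at $X$ of the unit $\eta \colon \mathrm{id}_{\mathrm{Cond}} \to f_* f^*$ of the adjunction $f^* \dashv f_*$ produced by Theorem~\ref{FF on topoi}.

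The second and final step is to appeal to the standard categorical fact that in any adjunction the left adjoint is fully faithful if and only if the unit is a natural isomorphism. Since Theorem~\ref{FF on topoi} asserts precisely the full faithfulness of $f^*$, the unit $\eta$ is a natural isomorphism, which is the desired conclusion. The naturality in $X$ is automatic from the naturality of $\eta$.

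I do not anticipate any substantive obstacle, since all the genuine work --- the explicit analysis of the fpqc plus construction, and the crucial use of Proposition~\ref{Pearl and products} to compare pearls of tensor products --- was already carried out in the course of proving Theorem~\ref{FF on topoi}; the present proposition is simply that theorem's full-faithfulness clause rephrased in the more evocative language of p\'etra spaces and enhanced $k$-points. As a brief sanity check, one may verify the isomorphism directly on profinite sets $S$ (which generate $\mathrm{Cond}$ under colimits): combining Proposition~\ref{Stone space in terms of pi_0} with Proposition~\ref{How enh works} yields
\[
\underline{S^\mathrm{p\acute{e}t}_k(k)}(S') \;\simeq\; \Hom_{\CAlg_k}\bigl(\CMcal C(S,k),\,\CMcal C(S',k)\bigr) \;\simeq\; \Hom_{\mathrm{ProFin}}(S', S),
\]
using Stone duality (Theorem~\ref{Stone theorem}) for the second bijection, and this recovers $S$ itself via the Yoneda lemma.
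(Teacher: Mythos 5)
Your proposal is correct and takes essentially the same approach as the paper: identify $\underline{X^\mathrm{p\acute{e}t}_k(k)} = f_*f^*(X)$, recognize the comparison map as the unit of the adjunction $f^*\dashv f_*$, and invoke the standard fact that a left adjoint is fully faithful if and only if the unit is an isomorphism. (You even correct a small slip in the paper, which misnames the unit as a ``counit.'')
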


\begin{proof}
The functor in question $\underline{X^\mathrm{p\acute{e}t}_k(k)} = f_*f^*(X)$ is the monad of the adjunction of Theorem \ref{FF on topoi}. Since the left adjoint $f^*$ is fully faithful, the corresponding counit map $X\to f_*f^*(X)$ is a natural isomorphism.
\end{proof}

\begin{corollary}\label{points are points}
There is a natural bijection $X(*)\simeq X_k^\mathrm{p\acute{e}t}(k)$ between the  ``underlying set'' of an arbitrary condensed set $X$ and the $k$-points of its p\'etra space.
\end{corollary}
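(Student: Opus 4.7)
The plan is to combine the two immediately preceding results. By Proposition \ref{Prop points unit}, there is a natural isomorphism of condensed sets $X \simeq \underline{X^\mathrm{p\acute{e}t}_k(k)}$. Since equalities of condensed sets are evaluated on profinite sets, I would evaluate this isomorphism at the terminal profinite set $S = *$. The left-hand side yields $X(*)$ by definition.

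For the right-hand side, I would invoke Proposition \ref{How enh works}, which gives $\underline{\mX(k)}(*) \simeq \mX(k)$ for any fpqc sheaf $\mX$ over $k$. Applying this with $\mX = X^\mathrm{p\acute{e}t}_k$ gives $\underline{X^\mathrm{p\acute{e}t}_k(k)}(*) \simeq X^\mathrm{p\acute{e}t}_k(k)$. Composing these natural isomorphisms produces the desired bijection $X(*) \simeq X^\mathrm{p\acute{e}t}_k(k)$, with naturality following from the naturality of both inputs.

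There is no substantive obstacle here, since the content has already been established; this is just a matter of chasing the displayed formula $\underline{\mX(k)}(*) \simeq \mX(k)$ through the unit-counit isomorphism $X \simeq f_* f^*(X)$. Any temptation to reprove this directly from the sheafification formula for $X_k^\mathrm{p\acute{e}t}$ in Proposition \ref{Stone space in terms of pi_0} (noting $\pi_0(|\Spec(k)|) = *$ for a field $k$) would be a distraction, since the adjunction-theoretic proof is cleaner and the fully faithfulness of $f^*$ already packages all the sheafification work.
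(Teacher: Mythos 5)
Your proposal is correct and is essentially the same argument the paper gives: the paper's one-line proof likewise combines Proposition \ref{Prop points unit} with the $S=*$ case of the formula in Proposition \ref{How enh works}. Nothing further is needed.
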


\begin{proof}
Combine the preceding Proposition \ref{Prop points unit} with the description of enhanced $k$-points from Proposition \ref{How enh works} in the case of $S=*$.
\end{proof}

\begin{remark}
The passage from topological spaces to condensed sets adds new objects such as $Q=\mathbf R/\mathbf R^\delta$ -  the quotient of the real line with its usual topology and the real line with the discrete topology. This condensed set satisfies  $Q(*)=*$, yet $Q$ and $*$ are not isomorphic. This shows that the underlying set $X(*)$ contains a lot less information about a condensed set $X$ than it does about a topological space. Corollary \ref{points are points} formalizes the connection between this and an analogous situation in algebraic geometry. There, going from $k$-varieties to $k$-schemes likewise 
 means that the $k$-points carry much less information.
\end{remark}

The most inconvenient aspect of the description of p\'etra spaces from  Proposition \ref{Stone space in terms of pi_0} is that it involves fpqc sheafification. We now show that this sheafification, as well as passage to connected components, are sometimes unnecessary.

\begin{prop}\label{FOP for Haus} 
Let $X$ be a compactly generated Hausdorff topological space. Its p\'etra $k$-space  is given by
$$
\CAlg_k\ni A\mapsto \Hom_\mathrm{Top}(\left |\Spec(A)\right|, X)\in \mathrm{Set}.
$$
\end{prop}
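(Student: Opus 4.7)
Proposition \ref{Stone space in terms of pi_0} exhibits the p\'etra space $X_k^{\mathrm{p\acute{e}t}}$ as the fpqc sheafification of the presheaf $A\mapsto X(\pi_0(|\Spec(A)|))$. The plan is to show two things: first, that for Hausdorff $X$ this presheaf is already isomorphic to the candidate $A\mapsto \Hom_{\mathrm{Top}}(|\Spec(A)|, X)$; and second, that this candidate is an fpqc sheaf on the nose, so no sheafification is needed.

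For the presheaf identification, recall that CGWH spaces embed fully faithfully into condensed sets via $X(S)=\Hom_{\mathrm{Top}}(S, X)$ on profinite $S$, so the input presheaf takes the form $A\mapsto \Hom_{\mathrm{Top}}(\pi_0(|\Spec(A)|), X)$. The surjective quotient map $|\Spec(A)|\to \pi_0(|\Spec(A)|)$ induces by precomposition an injection into $\Hom_{\mathrm{Top}}(|\Spec(A)|, X)$, and the substance of the identification is its surjectivity: every continuous $f:|\Spec(A)|\to X$ must factor through $\pi_0(|\Spec(A)|)$. For any specialization $y'\in\overline{\{y\}}$ in $|\Spec(A)|$, continuity of $f$ and the $T_1$ property of $X$ yield $f(y')\in \overline{\{f(y)\}}=\{f(y)\}$, so that $f(y')=f(y)$. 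Hence $f$ is constant on the closure of every point, and in particular on every irreducible closed subset of $|\Spec(A)|$. The standard structural fact that connected components of a spectral space coincide with the equivalence classes of the relation generated by specialization (cf.\ \cite[\href{https://stacks.math.columbia.edu/tag/0906}{Lemma 0906}]{stacks-project} and the surrounding discussion of the constructible topology) then forces $f$ to be constant on each connected component, which yields the desired factorization through $\pi_0(|\Spec(A)|)$.

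For the sheaf property, a faithfully flat map $A\to B$ in $\CAlg_k$ renders $|\Spec(B)|\to |\Spec(A)|$ a surjective submersion (\cite[\href{https://stacks.math.columbia.edu/tag/00HQ}{Lemma 00HQ}]{stacks-project}), and the canonical map $|\Spec(B\otimes_A B)|\to |\Spec(B)|\times_{|\Spec(A)|}|\Spec(B)|$ is surjective, which together realize $|\Spec(A)|$ as the coequalizer of $|\Spec(B\otimes_A B)|\rightrightarrows |\Spec(B)|$ in $\mathrm{Top}$. Applying the contravariant functor $\Hom_{\mathrm{Top}}(-, X)$ converts this coequalizer into the required equalizer, which is exactly the fpqc sheaf condition. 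The main difficulty sits in the factorization argument of the previous paragraph: the $T_1$ specialization observation is immediate, but the identification of connected components of a spectral space with the specialization-zigzag classes is the nontrivial structural input that must be imported from the theory of spectral spaces.
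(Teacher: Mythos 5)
Your proof mirrors the paper's own two-step structure: (1) identify the presheaf $A\mapsto X(\pi_0(|\Spec A|))$ from Proposition \ref{Stone space in terms of pi_0} with the candidate $A\mapsto\Hom_{\mathrm{Top}}(|\Spec A|, X)$, and (2) verify the latter is an fpqc sheaf by applying $\Hom_{\mathrm{Top}}(-,X)$ to a topological coequalizer. The difference is in how each step is justified, and in both places your cited references do not actually supply the facts you invoke.

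For step (1), the paper runs the argument through the Hausdorffization left adjoint of \cite{Hausdorffization} and the fact that for a spectral space this quotient terminates at $\pi_0$ in one step. You instead argue directly via specialization: $T_1$-ness of $X$ forces $f$ to be constant along specializations, and you then appeal to ``connected components of a spectral space coincide with specialization-zigzag classes.'' That equivalence is a correct and genuinely nontrivial fact about spectral spaces (it ultimately rests on compactness of the constructible topology), and it is precisely what the paper is using via \cite[\href{https://stacks.math.columbia.edu/tag/0904}{Lemma 0904}]{stacks-project}. But the tag you cite, \cite[\href{https://stacks.math.columbia.edu/tag/0906}{Lemma 0906}]{stacks-project}, only records that $\pi_0(X)$ is profinite for spectral $X$; it does not relate connected components to the specialization zigzag. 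As written this leaves the key structural input unsupported -- swap the citation to 0904 (or give the zigzag-vs-component argument explicitly).

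For step (2), the paper simply quotes \cite[Proposition 1.6.2.4]{SAG} for the coequalizer $|\Spec(B\o_A B)|\rightrightarrows|\Spec B|\to|\Spec A|$. Your alternative route -- surjectivity of $|\Spec B|\to|\Spec A|$, surjectivity of $|\Spec(B\o_A B)|\to|\Spec B|\times_{|\Spec A|}|\Spec B|$, and the quotient-map property of $|\Spec B|\to|\Spec A|$ -- is a correct way to produce that coequalizer. However, \cite[\href{https://stacks.math.columbia.edu/tag/00HQ}{Lemma 00HQ}]{stacks-project} only gives the surjectivity; the submersion/quotient-map property for a faithfully flat, quasi-compact map is a separate statement (e.g.\ \cite[\href{https://stacks.math.columbia.edu/tag/0257}{Lemma 0257}]{stacks-project}), and without it the universal property of the coequalizer is not established. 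So the sheaf-condition step has a real, if small, hole as written: supply the quotient-map lemma (or just cite the SAG result directly as the paper does) and the argument goes through.
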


\begin{proof}
In light of Proposition \ref{Stone space in terms of pi_0}, the p\'etra space is the fpqc sheafification of the presheaf
$$
A\mapsto\Hom_\mathrm{Cond}(\pi_0(\left |\Spec(A)\right |), X)\simeq \Hom_\mathrm{Top}(\pi_0(\left |\Spec(A)\right |), X).
$$
We have used that both topological spaces are compactly generated weakly Hausdorff, so that the Hom sets between them, computed in condensed sets and topological spaces respectively, agree.

Recall from \cite{Hausdorffization} that the forgetful functor $\mathrm{Haus}\to\mathrm{Top}$ admits a left adjoint, given object-wise by an iterated quotient construction. Each successive step of the construction takes a topological space $T$ to the quotient $T/\sim_\mathrm{equv}$ under the equivalence relation $\sim_\mathrm{equiv}$ generated by the (non-transitive) relation $\sim$ defined by $x\sim y$ holding if and only if $U\cap V\ne \emptyset$ for all open neighborhoods $U\ni x$ and $V\ni y$.
When this is applied to a spectral space $T=\left|\Spec(A)\right|$, if follows from \cite[\href{https://stacks.math.columbia.edu/tag/0904}{Lemma 0904}]{stacks-project} that it is precisely quotienting down to the connected components. In particular, the iterated process terminates after one step, producing the Hausdorff space\footnote{In the same vein, by
\cite[\href{https://stacks.math.columbia.edu/tag/0905}{Lemma 0905}]{stacks-project} a spectral space is Hausdorf if and only if it is profinite.} $\pi_0(T)$. 

The takeaway of the discussion in the preceding paragraph, in view of the Hausdorff assumption on $X$, is the existence of a natural bijection
$$
 \Hom_\mathrm{Top}(\pi_0(\left |\Spec(A)\right |), X) \simeq  \Hom_\mathrm{Top}(\left |\Spec(A)\right |, X).
$$
To prove the assertion of the Proposition, it suffices to show that this is an fpqc sheaf in the variable $A\in\CAlg_k$.
For this purpose, note that if $A\to B$ is a faithfully flat map, then
$$
\left |\Spec(B\o_A B)\right|\rightrightarrows\left |\Spec(B)\right | \to \left |\Spec(A)\right |
$$
is a coequalizer diagram in $\mathrm{Top}$ by \cite[Proposition 1.6.2.4]{SAG} (specifically by (2') in the proof). Applying the representable functor $\Hom_{\mathrm{Top}}(-, X)$ to this coequalizer diagram gives rise to an equalizer diagram, which exhibits the desired sheaf condition.
\end{proof}

\begin{remark}
We might wish to apply the formula for the p\'etra space of Proposition \ref{FOP for Haus} to any quasi-separated condensed set $X$, but it is not clear how to do that. The issue is that the topological space $\left |\Spec(A)\right |$ for a $k$-algebra $A$ can very well fail to be $T_1$, which means by \cite[the discussion following Proposition 2.15]{Condensed} that it does not (in the naive way) embed into condensed sets. It is therefore not clear how to make sense of a Hom set of the form $\Hom(\left |\Spec(A)\right |, X)$, i.e.\ in what category to take it.
\end{remark}

\subsubsection{The Stone core of an fpqc sheaf}

\begin{prop}\label{Stone core desc}
Let $\mX$ be a $\kappa$-fpqc sheaf over $k$ for some strongly inaccessible cardinal $\kappa$. Its Stone core can be expressed as the colimit
$$
\mX^\mathrm{Stone}=\varinjlim_{A\in (\CAlg_{k, \kappa}^\mathrm{Stone})_{/\mX}}\Spec(A)
$$
in $\mathrm{Shv}_{k}^\mathrm{fpqc}$, where $(\CAlg_{k, \kappa}^\mathrm{Stone})_{/\mX}$ denotes the category of pairs $(A, x)$ of a $\kappa$-small Stone $k$-algebra $A$ and a point $x\in \mX(A)$. There is a canonical morphism $\mX^\mathrm{Stone}\to \mX$, and $\mX$ is a p\'etra space, i.e. belongs to the essential image of the fully faithful functor $\mathrm{Cond}\to \mathrm{Shv}^\mathrm{fpqc}_{k}$ of Theorem \ref{FF on topoi}, if and only if this  is an isomorphism.
\end{prop}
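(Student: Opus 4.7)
The plan is to unwind the defining identity $\mX^\mathrm{Stone} = f^*f_*(\mX)$ using the explicit descriptions of $f_*$ and $f^*$ from Propositions \ref{How enh works} and \ref{Stone space in terms of pi_0}. Since $\mX$ is a $\kappa$-fpqc sheaf, everything can be computed in the $\kappa$-small setting, and then transferred along the fully faithful embedding $\mathrm{Shv}^\mathrm{fpqc}_{k,\kappa}\hookrightarrow\mathrm{Shv}^\mathrm{fpqc}_k$ of Proposition \ref{up the beanstalk}, which preserves colimits.

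First I would express the condensed set $f_*(\mX)=\underline{\mX(k)}$ as a canonical colimit of representables. The effective epimorphism topology on $\mathrm{ProFin}_\kappa$ is subcanonical, so the Yoneda embedding exhibits any $\kappa$-condensed set $Y$ as the colimit $Y\simeq\varinjlim_{(S,s)\in(\mathrm{ProFin}_\kappa)_{/Y}}S$ over its category of elements (the slice category of pairs $(S,s\in Y(S))$). Applying $f^*$, which is a left adjoint and so preserves colimits, and invoking the computation $f^*(S)\simeq\Spec(\CMcal C(S,k))$ of Proposition \ref{Stone space in terms of pi_0}, yields
$$
\mX^\mathrm{Stone}\;\simeq\;\varinjlim_{(S,s)\in(\mathrm{ProFin}_\kappa)_{/f_*(\mX)}}\Spec(\CMcal C(S,k)).
$$

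The next step is to reindex this colimit via Stone duality. By Proposition \ref{How enh works} we have $f_*(\mX)(S)=\mX(\CMcal C(S,k))$, so a pair $(S,s)$ is the same as a pair of a $\kappa$-small profinite set $S$ together with a point of $\mX$ with values in $\CMcal C(S,k)$. Theorem \ref{Stone theorem} contravariantly identifies this with a pair $(A,x)$ of a $\kappa$-small Stone $k$-algebra $A=\CMcal C(S,k)$ and a point $x\in\mX(A)$, i.e.\ an object of $(\CAlg_{k,\kappa}^\mathrm{Stone})_{/\mX}$. This reindexing delivers the claimed colimit formula, and the canonical morphism $\mX^\mathrm{Stone}\to\mX$ is then induced on the colimit by the tautological maps $\Spec(A)\to\mX$ classified by the points $x\in\mX(A)$; equivalently, it is the counit of the adjunction $f^*\dashv f_*$.

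For the final equivalence, I would appeal to a standard fact about fully faithful left adjoints: the essential image of $f^*$ consists exactly of those $\mX$ for which the counit $f^*f_*(\mX)\to\mX$ is an isomorphism. In one direction, fully faithfulness of $f^*$ from Theorem \ref{FF on topoi} means the unit $Y\to f_*f^*(Y)$ is an isomorphism, so if $\mX\simeq f^*(Y)$ then $f_*(\mX)\simeq Y$ and the counit becomes $f^*(Y)\to f^*(Y)$, which is an iso by naturality. The converse is immediate since an isomorphism $f^*f_*(\mX)\xrightarrow{\sim}\mX$ exhibits $\mX$ as lying in the essential image. The only minor subtlety to verify is the naturality argument behind the fully-faithful-left-adjoint criterion, which is entirely formal.
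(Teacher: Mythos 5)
Your proof is correct and follows essentially the same route as the paper's: decompose $f_*(\mX)$ as a colimit of representables over its category of elements, push through the colimit-preserving $f^*$ using $f^*(S)\simeq\Spec(\CMcal C(S,k))$, and conclude the essential-image criterion from fully faithfulness of $f^*$. The only cosmetic differences are that you set up the colimit on the profinite-set side and reindex through Stone duality, whereas the paper works directly with $\mX\vert_{\CAlg^\mathrm{Stone}_{k,\kappa}}$ as a colimit of corepresentables on the Stone-algebra side; and you invoke the standard fully-faithful-left-adjoint criterion (counit iso $\iff$ in essential image) abstractly, whereas the paper re-derives it on the spot via the idempotence calculations $f_*f^*=\id$, $(\mX^\mathrm{Stone})^\mathrm{Stone}=\mX^\mathrm{Stone}$, $(X^\mathrm{p\acute{e}t}_k)^\mathrm{Stone}=X^\mathrm{p\acute{e}t}_k$. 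One tiny point worth tightening: in your final step, the statement that ``the counit becomes $f^*(Y)\to f^*(Y)$, which is an iso by naturality'' is a bit quick; the clean justification is the triangle identity $\epsilon_{f^*(Y)}\circ f^*(\eta_Y)=\id$, which together with invertibility of $\eta_Y$ (fully faithfulness) forces $\epsilon_{f^*(Y)}$ to be an isomorphism, and then naturality transports this along the given $\mX\simeq f^*(Y)$.
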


\begin{proof}
The Stone core is the comonad $f^*f_*$ of the adjuncton constituting the geometric morphism $f: \mathrm{Cond}_\kappa\to \mathrm{Shv}^\mathrm{fpqc}_{k, \kappa}$. We saw in the proof of Theorem \ref{FF on topoi} that $f_*(\mX)$ can be identified, as a functor $\CAlg_{k, \kappa}^\mathrm{Stone}\to \mathrm{Set}$, with the restriction $\mX\vert_{\CAlg_{k, \kappa}^\mathrm{Stone}}$. Any presheaf $X :\CAlg_{k, \kappa}^\mathrm{Stone}\to\mathrm{Set}$ is equal to the colimit in the functor category of representables, indexed by maps $\Spec(A)\to X$ with $A\in \CAlg_{k, \kappa}^\mathrm{Stone}$. For $X=\mX\vert_{\CAlg_{k, \kappa}}^\mathrm{Stone}$, this indexing category is precisely $(\CAlg_{k, \kappa}^\mathrm{Stone})_{/\mX}$. The colimit formula for the Stone core now follows from noting that sheafification $(-)^\dagger :\Fun(\CAlg_{k, \kappa}, \mathrm{Set})\to \mathrm{Shv}^\mathrm{fpqc}_{k, \kappa}$, as a left adjoint, preserves colimits. The remaining statement follows from noting that we have $f_*f^*=\id$ by the proof of Theorem \ref{FF on topoi}, so that we have
$$
(\mX^\mathrm{Stone})^\mathrm{Stone} = f^*f_*f^*f_*(\mX) = f^*f_*(\mX) = \mX^\mathrm{Stone},
$$
while also for any $\kappa$-condensed set $X$  we get
$$
(X^\mathrm{p\acute{e}t}_k)^\mathrm{Stone} = f^*f_*f^*(X) = f^*(X) = X^\mathrm{p\acute{e}t}_k.
$$
That is to say, the Stone core is an idempotent monad, and its fixed points is the essential image of the p\'etra space functor.
\end{proof}

\begin{remark}\label{Stone points}
Though the Stone core $\mX^\mathrm{Stone}$ of a fpqc sheaf $\mX$ is in general difficult to describe more explicitly than in Proposition \ref{Stone core desc}, its $k$-points are very simple. Indeed, by combining Proposition \ref{How enh works} and the fact that $f^*f_* \simeq \mathrm{id}$ in the proof of Theorem \ref{FF on topoi}, we find that
$$
\mX^\mathrm{Stone}(k) = \underline{\mX^\mathrm{Stone}(k)}(*) = f_*f^*f_*(\mX)(*) \simeq f_*(\mX)(*) = \underline{\mX(k)}(*) = \mX(k).
$$
Unwinding the definitions, we find that the Stone core map $\mX^\mathrm{Stone}\to \mX$ induces a bijection on $k$-points, and more generally on $A$-points for all Stone $k$-algebras $A$.
\end{remark}

\begin{prop}\label{Stone core via pearl}
The Stone core of an fpqc sheaf $\mX$ over $k$ is the fpqc sheafification of the functor $A\mapsto X(A^\circ)$.
\end{prop}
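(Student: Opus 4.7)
The plan is to simply compose the two explicit formulas already established for the adjoints $f^*$ and $f_*$ of Theorem \ref{FF on topoi}. By definition, the Stone core is $\mX^\mathrm{Stone} = f^*f_*(\mX)$, so the task reduces to reading off what each of these two functors does.

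First, I would recall from the proof of Theorem \ref{FF on topoi} (and the identification of $\mathrm{Cond}_\kappa$ with sheaves on $(\CAlg_{k,\kappa}^\mathrm{Stone})^\mathrm{op}$ via Stone duality in the form of Proposition \ref{Prop1}) that the right adjoint $f_*$ sends $\mX$ to the restriction $\mX\vert_{\CAlg_k^\mathrm{Stone}}$, i.e.\ the condensed set whose value on a Stone algebra $A$ is $\mX(A)$. Next, I would invoke Remark \ref{sheafification formula}, which describes the left adjoint $f^*$, for any condensed set $X$ regarded as an accessible presheaf on $(\CAlg_k^\mathrm{Stone})^\mathrm{op}$, as the fpqc sheafification of the composite presheaf $A\mapsto X(A^\circ)$ on $\CAlg_k^\mathrm{op}$ obtained via the pearl adjunction \eqref{Plain adj}.

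Putting these together, applying $f^*$ to $X = f_*(\mX) = \mX\vert_{\CAlg_k^\mathrm{Stone}}$ produces the fpqc sheafification of the presheaf
\[
A\longmapsto f_*(\mX)(A^\circ) = \mX(A^\circ),
\]
which is exactly the stated formula. The only thing that needs a brief check is that the composite $A\mapsto \mX(A^\circ)$ really is an accessible presheaf on $\CAlg_k^\mathrm{op}$, so that the sheafification adjunction $(-)^\dagger :\mP_\mathrm{acc}(\CAlg_k^\mathrm{op})\rightleftarrows \mathrm{Shv}^\mathrm{fpqc}_k$ applies to it; but this is immediate from $\mX$ being an fpqc sheaf (hence accessible) and the pearl functor preserving $\kappa$-small objects by Proposition \ref{Pearl as a subring}, exactly as used in the proof of Theorem \ref{FF on topoi}.

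There is no real obstacle here: the proposition is essentially a bookkeeping consequence of the explicit descriptions of $f^*$ and $f_*$ already in hand. The only place where some care is warranted is the interplay between the $\kappa$-bounded versions and the colimit over strongly inaccessible $\kappa$ defining $\mathrm{Cond}$ and $\mathrm{Shv}_k^\mathrm{fpqc}$, but this is handled uniformly by Remark \ref{compatibility of sheafification} together with Remark \ref{sheafification formula}, so the formula $\mX^\mathrm{Stone} \simeq (\mX\circ(-)^\circ)^\dagger$ is independent of the choice of $\kappa$.
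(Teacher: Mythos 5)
Your proof is correct and follows exactly the route the paper intends: the paper's own proof is just a one-line pointer back to the proof of Theorem \ref{FF on topoi}, where $f_*$ is identified with restriction to Stone $k$-algebras and $f^*$ with the sheafification $(X\circ(-)^\circ)^\dagger$, so composing gives the stated formula. Your additional remarks on accessibility and $\kappa$-independence (via Proposition \ref{Pearl as a subring} and Remarks \ref{compatibility of sheafification}, \ref{sheafification formula}) are welcome but only make explicit bookkeeping that the paper leaves implicit.
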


\begin{proof}
We saw this description in the proof of Theorem \ref{FF on topoi}.
\end{proof}

\begin{remark}
The description of the Stone core in Proposition \ref{Stone core via pearl} is somewhat reminiscent of the formation of the de Rham space - see the discussion before Proposition \ref{Enhanced of de Rham}. The latter is defined for any fpqc space $\mX$ as $$
\mX_\mathrm{dR}(A) \coloneq \mX(A^\mathrm{red}),
$$
from which we can glean two important differences. Firstly, no sheafification is required in the de Rham space. Secondly, the pearl of a $k$-algebra comes with a natural map $A^\circ\to A$, while the reduction comes with a map $A\to A^\mathrm{red}$. Thus while the de Rham space is a quotient $\mX\to \mX_\mathrm{dR}$, the Stone core is a subspace $\mX^\mathrm{Stone}\to \mX$.
\end{remark}

The de Rham space, other than as an analogy to the Stone core, can also be used to express the latter's insensitivity to infinitesimal extensions.

\begin{prop}
For any fpqc sheaf $\mX$ over $k$, there are canonical isomorphisms
$$
\mX^\mathrm{Stone}\simeq 
(\mX_\mathrm{dR})^\mathrm{Stone}\simeq (\mX^\mathrm{Stone})_\mathrm{dR}.
$$
\end{prop}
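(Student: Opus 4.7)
The plan is to establish each isomorphism by an adjunction argument.

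For $\mX^\mathrm{Stone}\simeq (\mX_\mathrm{dR})^\mathrm{Stone}$, I apply the functor $f^*$ to the isomorphism $f_*(\mX)\simeq f_*(\mX_\mathrm{dR})$ supplied by Proposition \ref{Enhanced of de Rham}. Since $\mY^\mathrm{Stone}=f^*f_*(\mY)$ by definition, the desired identification follows at once.

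For $\mX^\mathrm{Stone}\simeq (\mX^\mathrm{Stone})_\mathrm{dR}$, I argue directly that $\mX^\mathrm{Stone}(A) \simeq \mX^\mathrm{Stone}(A^\mathrm{red})$ for every $k$-algebra $A$. Since $\mX^\mathrm{Stone}=f^*f_*(\mX)$ is a p\'etra sheaf, and p\'etra sheaves form a reflective subcategory of $\mathrm{Shv}^\mathrm{fpqc}_{k}$ (with reflection $(-)^\mathrm{Stone}$, inherited from $f^*$ being a fully faithful left adjoint), the $A$-points rewrite as
$$
\mX^\mathrm{Stone}(A)=\Hom_{\mathrm{Shv}^\mathrm{fpqc}_{k}}(\Spec A, f^*f_*(\mX))\simeq \Hom_{\mathrm{Cond}}(f_*(\Spec A), f_*(\mX)).
$$
The condensed set $f_*(\Spec A)$ is easy to describe: by Proposition \ref{How enh works}, its value on a profinite set $S$ is $\Hom_{\CAlg_k}(A, \CMcal C(S,k))$. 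Now $\CMcal C(S,k)$ is reduced (being a filtered colimit of the reduced $k$-algebras $k^{S_i}$), so any $k$-algebra map from $A$ into it factors uniquely through $A^\mathrm{red}$. Hence $f_*(\Spec A)\simeq f_*(\Spec A^\mathrm{red})$ canonically, and substituting into the display yields $\mX^\mathrm{Stone}(A)\simeq \mX^\mathrm{Stone}(A^\mathrm{red})=(\mX^\mathrm{Stone})_\mathrm{dR}(A)$.

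The main obstacle is justifying the displayed equality, which uses the reflective-subcategory structure of p\'etra sheaves: for a p\'etra target $f^*Y$, morphisms from an arbitrary sheaf $\mZ$ agree with morphisms from its Stone core $\mZ^\mathrm{Stone}=f^*f_*(\mZ)$, and these correspond (via $f^*\dashv f_*$ together with $f^*$ being fully faithful, so that $f_*f^*\simeq \id$) to morphisms $f_*(\mZ)\to Y$ in condensed sets. This is a standard consequence of the setup of Theorem \ref{FF on topoi} once the reflective subcategory structure is recognized, and then the rest of the second isomorphism follows by the reducedness of $\CMcal C(S,k)$.
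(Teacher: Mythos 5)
Your argument for the first isomorphism $\mX^\mathrm{Stone}\simeq(\mX_\mathrm{dR})^\mathrm{Stone}$ is correct and is the same as the paper's: apply $f^*$ to the isomorphism $f_*(\mX)\simeq f_*(\mX_\mathrm{dR})$ of Proposition \ref{Enhanced of de Rham}.

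The argument for the second isomorphism, however, rests on a false premise. The essential image of a fully faithful \emph{left} adjoint $f^*$ is a \emph{coreflective} subcategory of $\mathrm{Shv}^\mathrm{fpqc}_k$, not a reflective one: the coreflector is $f^*f_*=(-)^\mathrm{Stone}$, and the universal property reads $\Hom(\mY,\mZ)\simeq\Hom(\mY,\mZ^\mathrm{Stone})$ for $\mY$ a p\'etra space, so one replaces the \emph{target} by its Stone core when the \emph{source} is already a p\'etra space. What you assert is the opposite, a reflection property $\Hom(\Spec A,\mY)\simeq\Hom((\Spec A)^\mathrm{Stone},\mY)$ for $\mY$ p\'etra, which would require the inclusion of p\'etra spaces to admit a \emph{left} adjoint; no such adjoint is available (and would essentially presuppose a left adjoint to $U:\CAlg_k^\mathrm{Stone}\to\CAlg_k$, whose existence for general $k$ the paper poses as an open question).

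Concretely, your displayed formula $\mX^\mathrm{Stone}(A)\simeq\Hom_\mathrm{Cond}(f_*(\Spec A), f_*(\mX))$ fails: take $k=\mathbf F_2$, $\mX=\mathbf A^1_k$, $A=k[t]$. By Corollary \ref{left adjoint Fp} and Proposition \ref{Q implies}, $\mX^\mathrm{Stone}=\Spec(Q(k[t]))=\Spec(\mathbf F_2\times\mathbf F_2)$, so the left-hand side is $\mathrm{Idem}(k[t])=\{0,1\}$, a two-element set. On the right, $f_*(\Spec k[t])$ and $f_*(\mathbf A^1_k)$ are both the discrete two-element condensed set $k$ (Example \ref{enh of A}), so the right-hand side is $\Hom_\mathrm{Set}(\mathbf F_2,\mathbf F_2)$, a four-element set. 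Your observation that $f_*(\Spec A)\simeq f_*(\Spec A^\mathrm{red})$ (since continuous functions into $k$ form a reduced ring) is correct and pertinent, but because the bridging identity above is false, the argument does not close. One should instead argue directly that p\'etra spaces are de Rham invariant, for instance by combining Proposition \ref{Stone core via pearl} with the identity $A^\circ=(A^\mathrm{red})^\circ$ (Corollary \ref{Pearl as functions}) and tracking the effect of fpqc sheafification.
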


\begin{proof}
Since the Stone core is defined as $\mX^\mathrm{Stone}= \underline{\mX(k)}^\mathrm{p\acute{e}t}_k$, this follows from the
invariance of the enhanced $k$-points under passage to the de Rham space of Proposition \ref{Enhanced of de Rham}.
\end{proof}

In some cases, e.g.\ by Corollary \ref{left adjoint Fp} when $k=\mathbf F_p$ is a prime field, the forgetful functor from Stone $k$-algebras to $k$-algebras admits a left adjoint. In that case, we can describe the Stone core of affines, and consequently enhanced $k$-points, quite explicitly.

\begin{prop}\label{Q implies}
Let $k$ be a field for which the forgetful functor $U: \CAlg_{k}^\mathrm{Stone}\to\CAlg_{k}$ admits a left adjoint $Q : \CAlg_{k} \to \CAlg_{k}^\mathrm{Stone}$. Then the Stone core of an affine $k$-scheme is given by $\Spec(A)^\mathrm{Stone} = \Spec(Q(A))$.
\end{prop}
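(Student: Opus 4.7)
My plan is to combine the description of the Stone core from Proposition \ref{Stone core via pearl} with a double adjunction argument. The key point is that the existence of the left adjoint $Q$ collapses a potential sheafification into a representable functor, which is automatically an fpqc sheaf.

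First I would invoke Proposition \ref{Stone core via pearl} to identify $\Spec(A)^\mathrm{Stone}$ as the fpqc sheafification of the presheaf
$$
B \longmapsto \Spec(A)(B^\circ) = \Hom_{\CAlg_k}(A, B^\circ).
$$
Next, since $B^\circ$ is a Stone $k$-algebra, I would use the adjunction $(Q \dashv U)$ hypothesized in the statement to rewrite
$$
\Hom_{\CAlg_k}(A, U(B^\circ)) \;\simeq\; \Hom_{\CAlg_k^\mathrm{Stone}}(Q(A), B^\circ).
$$
Then I would apply the universal property of the pearl (the adjunction $(U \dashv (-)^\circ)$ from Proposition \ref{Pearl exists}) to the right-hand side to obtain
$$
\Hom_{\CAlg_k^\mathrm{Stone}}(Q(A), B^\circ) \;\simeq\; \Hom_{\CAlg_k}(U(Q(A)), B) \;=\; \Spec(Q(A))(B).
$$
Chaining these identifications naturally in $B$ exhibits the presheaf $B \mapsto \Spec(A)(B^\circ)$ as already isomorphic to the representable presheaf $\Spec(Q(A))$.

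Finally, I would observe that representable presheaves on $\CAlg_k^\mathrm{op}$ satisfy fpqc descent by classical faithfully flat descent theory, so the representable $\Spec(Q(A))$ is already an fpqc sheaf. Hence the sheafification in Proposition \ref{Stone core via pearl} is redundant, and the proof concludes with $\Spec(A)^\mathrm{Stone} = \Spec(Q(A))$. I do not anticipate any real obstacle here: the whole argument is a two-step adjunction chase, and the main point is really conceptual — the presence of the left adjoint $Q$ lets one express the pearl-twisted affine as an honest affine $k$-scheme, so no sheafification step is needed.
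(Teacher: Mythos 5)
Your argument is correct, and it takes a genuinely different route from the paper's. The paper appeals to Proposition \ref{Stone core desc}, which expresses the Stone core as the colimit $\varinjlim_{B\in(\CAlg_{k,\kappa}^\mathrm{Stone})_{/\Spec(A)}}\Spec(B)$, and observes that the unit $A\to Q(A)$ makes $Q(A)$ the universal (initial/terminal, depending on the direction convention for the slice) object of the indexing category, so that the colimit collapses to $\Spec(Q(A))$. You instead start from Proposition \ref{Stone core via pearl}, which describes the Stone core as the fpqc sheafification of the pearl-twisted functor $B\mapsto \Spec(A)(B^\circ)$, and run the two-step adjunction chase
$$
\Hom_{\CAlg_k}(A, U(B^\circ))\;\simeq\;\Hom_{\CAlg_k^\mathrm{Stone}}(Q(A), B^\circ)\;\simeq\;\Hom_{\CAlg_k}(U(Q(A)), B),
$$
natural in $B$, to see that this presheaf is already the representable $\Spec(Q(A))$, hence is already an fpqc sheaf and needs no sheafification. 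This buys you a more self-contained computation: you never need the co-Yoneda/colimit description of the Stone core, only the universal properties of $Q$ and of the pearl; it also makes the ``no sheafification is required'' phenomenon explicit rather than implicit. The paper's argument is terser but demands that the reader accept the colimit formula and correctly track which object of the slice category is universal. Both are valid; yours is arguably the cleaner write-up of the same underlying fact that the left adjoint $Q$ turns the pearl-twisted affine into an honest affine scheme.
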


\begin{proof}
Let $A$ be a $k$-algebra, and choose a strongly inaccessible cardinal $\kappa$ for which both $A$ and $Q(A)$ are $\kappa$-small.
Then the Stone $k$-algebra $Q(A)$ is for any $\kappa$-small $k$-algebra $A$ initial object in the category
$$
(\CAlg_{k, \kappa}^\mathrm{Stone})_{/\Spec(A)}\simeq (\CAlg_{k, \kappa}^\mathrm{Stone})_{A/}.
$$
The claim now follows directly from the formula for the Stone core of Proposition \ref{Stone core desc}.
\end{proof}

\begin{exun}\label{ex with Q}
Let $k$ be as in Proposition \ref{Q implies} and $A$ any $k$-algebra. The enhanced $k$-points of its spectrum are given by
$$
\underline{\Spec(A)(k)} =\underline{\Spec(A)^\mathrm{Stone}(k)} = \underline{\Spec(Q(A))(k)}= \left|\Spec(Q(A))\right|,
$$
and in particular form a profinite set.
\end{exun}

\begin{exun}
Let us continue within the setting of Example \ref{ex with Q}, with the additional assumption that the $k$-algebra $A$ is of finite type. That means that there is a closed immersion, and therefore a monomorphism, into some affine space $\Spec(A)\to \mathbf A^n_k$. The enhanced $k$-points functor is a right adjoint and as such preserves monomorphisms. It follows from Example \ref{enh of A} that $\underline{\Spec(A)(k)}\to \underline{\mathbf A^n_k(k)}=k^n$ is a  condensed subset of the discrete set. Since we already know it is profinite as well, it follows that it is a profinite subset of a discrete set, and hence itself discrete. It therefore equals its underlying set
$$
\underline{\Spec(A)(k)} = \Spec(A)(k),
$$
the ordinary set of $k$-points viewed as a discrete condensed set.
\end{exun}

The preceding example suggests that the enhanced $k$-points might be of most interest for schemes of infinite type, at least over a base field such as $k=\mathbf F_p$.

\begin{remark}
Let $k=\mathbf F_p$ for a prime number $p$. According to the proof of corollary \ref{left adjoint Fp}, the left adjoint $Q:\CAlg_{\mathbf F_p}\to \CAlg_{\mathbf F_p}^\mathrm{Stone}$ is given by $Q(A) = A/(a-a^p: a\in A)$. That it to say, $\Spec(Q(A))$ is identified with the fixed-point locus $\Spec(A)^{F}$ of the Frobenius endomorphism $F:\Spec(A)\to\Spec(A)$.
\end{remark}

\section{Stone duality: condensed sets to algebraic geometry}\label{Stone duality section}
We suggest interpreting  Theorem \ref{FF on topoi} as an extension of Stone duality of Theorem \ref{Stone theorem}, going between condensed mathematics on one hand and algebraic geometry on the other. 
The key aspect in which Theorem \ref{FF on topoi} is to be viewed as a duality is that many familiar properties and operations on one side correspond to ones on the other side, but which are often not \textit{a priori} obviously encoding the same thing. This section sets out to provide a repertoire of such examples.

\begin{remark}
One possible objection to this interpretation is that the correspondence of Theorem \ref{FF on topoi} is that it does not give an \textit{equivalence} between both sides, as a duality likely should. But if we restrict on the algebro-geometric side to only those fpqc sheaves which equal their own Stone core (i.e.\ to p\'etra spaces), that does become true as well by Proposition \ref{Stone core desc}. This is effectively what we do, but since the category of all fpqc sheaves is much more familiar, we prefer to phrase everything in terms of it instead.
\end{remark}

In Subsection \ref{Subsection 3.1} we study what affinization corresponds under Stone duality. Then we turn in Subsection \ref{Subsection 3.2} to showing that properties of a condensed set translate into algebro-geometric properties of its p\'etra space. 
In Subsection \ref{Subsection 3.3} we prove that Stone duality interchanges closed embeddings of condensed sets with closed immersions of p\'etra spaces. We wish to extend this to open embeddings and immersions respectively, and we do so in Subsection \ref{Subsection 3.5}, after first introducing and studying the notion of an open embedding of condensed sets in Subsection \ref{Subsection 3.4}.
 In Subsection \ref{Subsection 3.6} we discuss sheaves of modules in both the condensed and algebro-geometric setting, and show that they agree under Stone duality. From this, we deduce a Stone reconstruction result for compact Hausdorff spaces in Subsection \ref{Subsection 3.7}.\\

\textit{Throughout this section, unless explicitly stated otherwise, let $k$ be an arbitrary  field.}\\

\subsection{Global functions and affinization}\label{Subsection 3.1}
Both the condensed side and algebro-geometric side possess a natural notion of ``global functions'' - condensed maps into the fixed field $k$ on the one hand, and global sections of the structure sheaf on the other. As a first instance of Stone duality, we show that these coincide, and in fact not just over a field.

\begin{prop}\label{Global functions}
Let $X$ be a condensed set and let $\mX=X^\mathrm{p\acute{e}t}_k$ be its p\'etra space.
There is a canonical isomorphism of $k$-algebras
$$
\Gamma(\mX; \CMcal O_{\mX})\simeq \Hom_{\mathrm{Cond}}(X, k).
$$
\end{prop}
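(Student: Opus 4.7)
The plan is to combine the adjunction of Theorem \ref{FF on topoi} with the computation of the enhanced $k$-points of the affine line from Example \ref{enh of A}. Concretely, I would first observe that for any fpqc sheaf $\mX$ the ring of global sections of its structure sheaf admits the representability description
$$
\Gamma(\mX; \CMcal O_\mX) \simeq \Hom_{\mathrm{Shv}^\mathrm{fpqc}_k}(\mX, \mathbf A^1_k),
$$
since $\mathbf A^1_k$ is the affine line that represents the forgetful functor $\CAlg_k \to \mathrm{Set}$, and this representability is part of the definition of $\CMcal O_\mX$ in the functor-of-points approach.

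Next, I would substitute $\mX = f^*(X) = X^\mathrm{p\acute{e}t}_k$ and apply the adjunction $f^*\dashv f_*$ of Theorem \ref{FF on topoi} to get a natural bijection
$$
\Hom_{\mathrm{Shv}^\mathrm{fpqc}_k}(f^*(X), \mathbf A^1_k) \simeq \Hom_{\mathrm{Cond}}(X, f_*(\mathbf A^1_k)) = \Hom_{\mathrm{Cond}}(X, \underline{\mathbf A^1_k(k)}).
$$
By Example \ref{enh of A}, the enhanced $k$-points of the affine line is the discrete condensed set $\underline{\mathbf A^1_k(k)} \simeq k$, so the right-hand side is precisely $\Hom_\mathrm{Cond}(X, k)$. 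Chaining these identifications yields the desired bijection.

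It remains to check compatibility with the $k$-algebra structure on both sides. This is essentially automatic: $\mathbf A^1_k$ is a ring object in $\mathrm{Shv}^\mathrm{fpqc}_k$, and the right adjoint $f_*$ preserves finite products (as all limits), hence carries this ring object structure to a ring object structure on $\underline{\mathbf A^1_k(k)}$, which by the explicit description of Example \ref{enh of A} is exactly the usual discrete $k$-algebra structure on $k$. Both the Hom-set on the algebro-geometric side and the Hom-set on the condensed side inherit their $k$-algebra structure pointwise from this ring object, and the adjunction bijection is natural in the target, so it respects these structures. I do not anticipate any serious obstacle here, as every step is a direct application of an already-established result from the paper.
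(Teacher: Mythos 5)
Your proposal is correct and follows essentially the same chain of identifications as the paper's own proof: representability of $\Gamma(\mX;\CMcal O_\mX)$ by $\mathbf A^1_k$, the adjunction $f^*\dashv f_*$, and Example \ref{enh of A}. You go slightly further by explicitly spelling out why the $k$-algebra structure is respected (via $\mathbf A^1_k$ being a ring object and $f_*$ preserving finite products), which the paper leaves implicit.
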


\begin{proof}
We have a chain of $k$-algebra isomorphisms
\begin{eqnarray*}
\Gamma(\mX; \CMcal O_{\mX}) &\simeq&\Hom_{\mathrm{Shv}_k^\mathrm{fpqc}}(\mX, \mathbf A^1_k) \\
&=&\Hom_{\mathrm{Shv}_k^\mathrm{fpqc}}(f^*(X), \mathbf A^1_k) \\
&\simeq&\Hom_{\mathrm{Cond}}(X, f_*(\mathbf A^1_k)) \\
&=&\Hom_{\mathrm{Cond}}(X, \underline{\mathbf A^1_k(k)})) \\
&\simeq &\Hom_{\mathrm{Cond}}(X, k),
\end{eqnarray*}
where the final isomorphism comes from Example \ref{enh of A}.
\end{proof}

We can leverage Proposition \ref{Global functions}  and Stone duality to prove the following result about condensed sets, which does not seem immediate otherwise - see Remark \ref{Well-defined non-obvious}.

\begin{prop}\label{left adjoint profinite components}
The inclusion functor $\mathrm{ProFin}\to \mathrm{Cond}$, obtained by passage to colimit over uncountable strong limit cardinals $\kappa$ from the Yoneda inclusions $\mathrm{ProFin}_\kappa\to\mathrm{Cond}_\kappa$,
 admits a left adjoint $\widehat\pi_0:\mathrm{Cond}\to\mathrm{ProFin}$.
\end{prop}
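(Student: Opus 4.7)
The plan is to reduce everything to Stone duality and the pearl functor, using the algebro-geometric side to avoid a direct construction in condensed mathematics. I fix any field $k$ (say $\mathbf{F}_2$); the resulting functor will turn out not to depend on this choice. Given a condensed set $X$, I set $R_X \coloneq \Hom_{\mathrm{Cond}}(X, k)$, which by Proposition \ref{Global functions} coincides with the global functions $\Gamma(X^{\mathrm{p\acute{e}t}}_k; \mathcal{O}_{X^{\mathrm{p\acute{e}t}}_k})$. Then I form its pearl $R_X^\circ$ (Proposition \ref{Pearl exists}) and declare $\widehat\pi_0(X)$ to be the profinite set dual to the Stone $k$-algebra $R_X^\circ$ under Theorem \ref{Stone theorem}. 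By Corollary \ref{Pearl as functions} and Proposition \ref{Pearl as pi_0}, an equivalent description is $\widehat\pi_0(X) \simeq \pi_0(|\Spec(R_X)|)$, which makes manifest the name and the expected independence of the chosen base field.

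To establish the adjunction, I would verify, for any profinite set $S$, the chain of natural bijections
\begin{align*}
\Hom_{\mathrm{Cond}}(X, S)
&\simeq \Hom_{\mathrm{Shv}^{\mathrm{fpqc}}_{k}}(X^{\mathrm{p\acute{e}t}}_k, \Spec(\mathcal{C}(S, k))) \\
&\simeq \Hom_{\CAlg_k}(\mathcal{C}(S, k), R_X) \\
&\simeq \Hom_{\CAlg_k^{\mathrm{Stone}}}(\mathcal{C}(S, k), R_X^\circ) \\
&\simeq \Hom_{\mathrm{ProFin}}(\widehat\pi_0(X), S).
\end{align*}
The first bijection is full faithfulness of $f^*$ (Theorem \ref{FF on topoi}) combined with the identification $S^{\mathrm{p\acute{e}t}}_k \simeq \Spec(\mathcal{C}(S, k))$ of Proposition \ref{Stone space in terms of pi_0}; the second is the functor-of-points description of $\Spec$ together with Proposition \ref{Global functions}; the third is the pearl adjunction, using that the source algebra $\mathcal{C}(S, k)$ is already Stone; and the fourth is Stone duality itself. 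Naturality in $X$ and $S$ is automatic from naturality in each individual step.

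The hard part, and almost certainly the reason this statement is non-obvious when approached directly in condensed terms (as signaled by the reference to Remark \ref{Well-defined non-obvious}), will be size control: if $X\in\mathrm{Cond}_\kappa$, the ring $R_X$ need not be $\kappa$-small, so neither a priori is $R_X^\circ$, and it is not clear that $\widehat\pi_0(X)$ belongs to $\mathrm{ProFin}_\kappa$. I would handle this by carrying out the construction at a sufficiently large strongly inaccessible $\kappa'>\kappa$, producing an adjunction $\mathrm{Cond}_{\kappa'}\rightleftarrows\mathrm{ProFin}_{\kappa'}$, and then invoking Proposition \ref{up the beanstalk} to check that these fit together into an adjunction on the full colimit $\mathrm{Cond}=\varinjlim_\kappa\mathrm{Cond}_\kappa$. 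Compatibility across different $\kappa$ and independence from the chosen field $k$ both follow automatically from the universality of the left adjoint.
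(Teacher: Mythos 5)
Your proof is correct and takes a genuinely different route from the paper's, with some real advantages. Both arguments begin with the same two reductions, arriving at a natural bijection $\Hom_{\mathrm{Cond}}(X,S)\simeq\Hom_{\CAlg_k}(\CMcal C(S,k),\Hom_{\mathrm{Cond}}(X,k))$ via full faithfulness of $f^*$, the formula $S^{\mathrm{p\acute{e}t}}_k\simeq\Spec(\CMcal C(S,k))$, and the spectrum--global-sections adjunction. They then diverge: the paper restricts to a \emph{finite} field $k$, identifies $\Hom_{\mathrm{Cond}}(X,k)$ with $\CMcal C(X(*)_{\mathrm{top}},k)$, and passes through the Stone--\v{C}ech compactification to arrive at $\widehat\pi_0(X)=\pi_0(\beta(X(*)_{\mathrm{top}}))$, which requires a careful argument (spelled out in the paper via the algebraic formula $\widehat\pi_0(X)\simeq|\Spec(\Hom_{\mathrm{Cond}}(X,k))|$, valid there because $\Hom_{\mathrm{Cond}}(X,k)$ is already Stone when $k$ is finite) to show the result is independent of the choice of $\kappa$ used to form $X(*)_{\mathrm{top}}$. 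You instead insert the pearl adjunction $\Hom_{\CAlg_k}(\CMcal C(S,k),R_X)\simeq\Hom_{\CAlg_k^{\mathrm{Stone}}}(\CMcal C(S,k),R_X^\circ)$ before invoking Stone duality, which is cleaner and works over an arbitrary field: the pearl $R_X^\circ$ automatically cuts down to the Stone subalgebra, so no finiteness of $k$ is needed. Your size worry is legitimate in spirit (and mirrors the paper's Remark \ref{Well-defined non-obvious}), but it is resolved more simply than you suggest: since Hom sets in $\mathrm{Cond}$ are small sets independent of $\kappa$, the ring $R_X$ and hence $R_X^\circ$ and $\widehat\pi_0(X)$ are small and canonically defined without any choice of $\kappa$; one only needs to pick a sufficiently large $\kappa'$ to realize the bijection inside a fixed topos, and uniqueness of adjoints handles the rest, exactly as you note. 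The one thing the paper's detour buys is the explicit topological description $\widehat\pi_0(X)=\pi_0(\beta(X(*)_{\mathrm{top}}))$, which is used in the ensuing remark to identify $\widehat\pi_0$ of a discrete set with its Stone--\v{C}ech compactification; your approach recovers this via $\widehat\pi_0(X)\simeq\pi_0(|\Spec(R_X)|)$ but less transparently.
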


\begin{proof}
Let $X$ be a condensed set, $S$ a profinite set, and $k$ a \textit{finite} field. By Theorem \ref{FF on topoi} we have the first in the chain of natural bijections
\begin{eqnarray*}
\Hom_{\mathrm{Cond}}(X, S)
&\simeq& \Hom_{\mathrm{Shv}_k^\mathrm{fpqc}}\big(X^\mathrm{p\acute{e}t}_k, S^\mathrm{p\acute{e}t}_k\big)\\
&\simeq&
 \Hom_{\mathrm{Shv}_k^\mathrm{fpqc}}\big(X^\mathrm{p\acute{e}t}_k, \Spec(\CMcal C(S, k))\big)\\
 &\simeq&
 \Hom_{\CAlg_k}\big(\CMcal C(S, k), \Hom_{\mathrm{Cond}}(X, k)\big),
\end{eqnarray*}
 with the second coming from Proposition \ref{Stone space in terms of pi_0} and the third from Proposition \ref{Global functions} and the usual adjunction between the spectrum and global functions in algebraic geometry.
 Now choose a strongly inaccessible cardinal $\kappa$ such that $X\in \mathrm{Cond}_\kappa$.  The finite field $k$ evidently also belongs to $\mathrm{Cond}_\kappa$, and since the left Kan extension $\mathrm{Cond}_\kappa\to\mathrm{Cond}$ is fully faithful, we can obtain natural isomorphisms
$$
\Hom_{\mathrm{Cond}}(X, k)
\simeq 
\Hom_{\mathrm{Cond}_\kappa}(X, k)
\simeq \Hom_{\mathrm{Top}}(X(*)_\mathrm{top}, k).
$$
Here $X(*)_\mathrm{top}$ denotes the underlying topological space of $X$, i.e.\ its image under the left adjoint to the canonical functor $\mathrm{Top}\to \mathrm{Cond}_\kappa$ of \cite[Proposition 1.7]{Condensed}. The Stone-\v{C}ech compactification $\beta: \mathrm{Top}\to\mathrm{CHaus}$ is the left adjoint to the forgetful functor, and so due to $k$ being assumed finite and therefore compact Hausdorff, we have
$$
\Hom_{\mathrm{Top}}(X(*)_\mathrm{top}, k)\simeq \Hom_{\mathrm{CHaus}}(\beta(X(*)_\mathrm{top}), k)
$$
On the other hand, $k$ is discrete, and so any continuous map from another set into it must contract all connected components. As in  Proposition \ref{Pearl as pi_0}, the set of connected components $\pi_0(X)$ is totally disconnected with respect to the connected topology by \cite[\href{https://stacks.math.columbia.edu/tag/08ZL}{Lemma 08ZL}]{stacks-project} and finite due to $X$ being compact. It follows that
$$
 \Hom_{\mathrm{CHaus}}(\beta(X(*)_\mathrm{top}), k) \simeq  \Hom_{\mathrm{ProFin}}(\pi_0(\beta(X(*)_\mathrm{top})), k).
$$
Let us denote $\widehat\pi_0(X) \coloneq\pi_0(\beta(X(*)_\mathrm{top}))$. By combining the above isomorphisms, we obtain
$$
\Hom_{\mathrm{Cond}}(X, S)
\simeq
 \Hom_{\CAlg_k}\big(\CMcal C(S, k), \CMcal C(\widehat\pi_0(X), k)\big)\simeq \Hom_\mathrm{ProFin}(\widehat\pi_0(X), S)
$$
 where the second bijection comes from Stone duality in the form of Theorem \ref{Stone theorem}.
 
 The functor $\widehat\pi_0$ as defined above seems to depend on the choice of the cardinal $\kappa$, and to finish the proof, we must show that it does not. But we saw in the above proof that continuous $k$-valued functions on this profinite set are the same as those on the condensed set $X$. The explicit form of  Stone duality  from Theorem \ref{Stone theorem}  furnishes a canonical homeomorphism
 \begin{equation}\label{Stone-Cech formula}
\widehat\pi_0(X)\simeq \left |\Spec(\Hom_{\mathrm{Cond}}(X, k))  \right |
\end{equation}
 which is independent of $\kappa$ and shows that the construction $X\mapsto \widehat\pi_0(X)$ is functorial with respect to an arbitrary condensed set $X$. By what we have already shown above, it is the desired left adjoint.
\end{proof}

\begin{remark}\label{Well-defined non-obvious}
The assertion of Proposition \ref{left adjoint profinite components} is non-obvious due to the fact that the functor $\mathrm{Cond}_\kappa\to\mathrm{ProFin}$, given by $X\mapsto \pi_0(\beta(X(*)_\mathrm{top}))$ seems at first glance to depend on the choice of the cardinal $\kappa$. Indeed, recall from \cite[Warning 2.14]{Condensed} that there is not in general a well-defined functor $\mathrm{Top}\to\mathrm{Cond}$ without the restriction to $\kappa$-condensed sets on the right. The topological space $X(*)_\mathrm{top}$ might therefore depend on the choice of $\kappa$ as well.  But as the proof of Proposition \ref{left adjoint profinite components} shows, that is at least not the case on connected components with the quotient topology.
\end{remark}

\begin{remark}
Let $S$ be a set. We may identify it with a discrete condensed set, and extract from it a profinite set $\widehat{\pi}_0(S)$. Since $S(*)_\mathrm{top}=S$, we find from the explicit construction in the proof of Proposition \ref{left adjoint profinite components} that $\widehat{\pi}_0(S) =\beta S$ is the Stone-\v{C}ech compactification of $S$. The algebro-geometric formula \eqref{Stone-Cech formula} specializes for $k=\mathbf F_2$ to the standard expression  for the Stone-\v{C}ech compactification $\beta S\simeq \left |\Spec(P(S))\right |$, where $P(S) = \Hom_\mathrm{Set}(S, \{0, 1\})$ is the power set of $S$.
\end{remark}

By unpacking its definition, we can show that the functor $\widehat\pi_0$ corresponds under Stone duality in the sense of Theorem \ref{FF on topoi}  to the affinization of fpqc stacks.

\begin{prop}\label{Affinization is pi_0}
Let $X$ be a condensed set and $k$ a finite field.
The affinization of its p\'etra space $X^\mathrm{p\acute{e}t}_k$ is the p\'etra space of the profinite set $\widehat\pi_0( X)$.
\end{prop}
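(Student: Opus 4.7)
The plan is to chain together the various descriptions of the objects involved and reduce the claim to a bijection already furnished by the adjunction defining $\widehat\pi_0$. Recall that the affinization functor $\mathrm{Shv}_k^\mathrm{fpqc}\to\mathrm{Aff}_k$ sends an fpqc sheaf $\mX$ to $\Spec(\Gamma(\mX;\CMcal O_\mX))$, being the left adjoint to the inclusion of affine $k$-schemes. So the assertion is that there is a canonical isomorphism of affine $k$-schemes
$$
\Spec\bigl(\Gamma(X^\mathrm{p\acute{e}t}_k;\CMcal O_{X^\mathrm{p\acute{e}t}_k})\bigr)\;\simeq\; \widehat\pi_0(X)^\mathrm{p\acute{e}t}_k.
$$

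First, I would rewrite the left-hand side using Proposition \ref{Global functions}, which identifies $\Gamma(X^\mathrm{p\acute{e}t}_k;\CMcal O_{X^\mathrm{p\acute{e}t}_k})$ with $\Hom_{\mathrm{Cond}}(X,k)$ as $k$-algebras. Next I would rewrite the right-hand side using Proposition \ref{Stone space in terms of pi_0}, which gives $\widehat\pi_0(X)^\mathrm{p\acute{e}t}_k\simeq\Spec(\CMcal C(\widehat\pi_0(X),k))$. Thus the claim reduces to producing a natural isomorphism of $k$-algebras
$$
\Hom_{\mathrm{Cond}}(X,k)\;\simeq\;\CMcal C(\widehat\pi_0(X),k).
$$

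This is precisely the content of the adjunction established in Proposition \ref{left adjoint profinite components}: applying $\Hom_{\mathrm{Cond}}(X,-)\simeq \Hom_{\mathrm{ProFin}}(\widehat\pi_0(X),-)$ to the finite (hence profinite) set $k$, and noting that maps into the discrete set $k$ are the same whether computed in $\mathrm{Cond}$ or in $\mathrm{ProFin}$ and agree with $\CMcal C(\widehat\pi_0(X),k)$, yields a natural bijection. The $k$-algebra structures on both sides are inherited point-wise from the $k$-algebra structure on $k$, so the bijection — being induced by the unit $X\to\widehat\pi_0(X)$ of the adjunction — automatically respects addition and multiplication. Indeed, the formula \eqref{Stone-Cech formula} for $\widehat\pi_0(X)$ already packages this ring-level identification in the guise of Stone duality.

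The only subtle point I anticipate is verifying that the bijection provided by Proposition \ref{left adjoint profinite components} upgrades to a $k$-algebra isomorphism; but this is formal once one observes that the unit map $X\to \widehat\pi_0(X)$ is a morphism in $\mathrm{Cond}$ whose pullback induces the bijection in question, and that $k\in\mathrm{Cond}$ is a ring object. Chaining all four isomorphisms together then yields the desired identification, with naturality in $X$ coming from functoriality of each step.
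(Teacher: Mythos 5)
Your proof is correct and follows essentially the same route as the paper's: both pass through Proposition \ref{Global functions} to identify $\Gamma(X^\mathrm{p\acute{e}t}_k;\CMcal O_{X^\mathrm{p\acute{e}t}_k})$ with $\Hom_{\mathrm{Cond}}(X,k)$, invoke the construction of $\widehat\pi_0$ from Proposition \ref{left adjoint profinite components} (via the identification $\CMcal C(\widehat\pi_0(X),k)\simeq\Hom_{\mathrm{Cond}}(X,k)$), and finish with Proposition \ref{Stone space in terms of pi_0}. Your additional remark checking that the bijection respects the $k$-algebra structure is a sensible precaution, which the paper leaves implicit in its phrase ``canonical bijection.''
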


\begin{proof}
The affinization of any fpqc sheaf $\mX$ is given by $\mathrm{Aff}(\mX) = \Spec(\Gamma(\mX; \CMcal O_{\mX}))$. By Proposition \ref{Global functions} we find the affinization of a p\'etra space given by
$$
\mathrm{Aff}(X^\mathrm{p\acute{e}t}_k)\simeq \Spec(\Hom_{\mathrm{Cond}}(X, k)).
$$
Bu construction of the profinite set $\widehat\pi_0(X)$ in the proof of Proposition \ref{left adjoint profinite components}, its continuous functions come with a canonical bijection $\CMcal C (\widehat\pi_0(X), k)\simeq \Hom_{\mathrm{Cond}}(X, k)$. The desired result now follows by the formula for the p\'etra spaces of profinite sets of Proposition \ref{Stone space in terms of pi_0}.
\end{proof}

\begin{remark}\label{Affinization for infinite fields}
Without the finiteness assumption on the field $k$, the  affinization of the p\'etra space of a $\kappa$-condensed set $X$ could still be expressed as
$$
\mathrm{Aff}(X^\mathrm{p\acute{e}t}_k)\simeq \Spec(\CMcal C(\pi_0(X(*)_\mathrm{top}), k)).
$$
The set of connected components $\pi_0(X(*)_\mathrm{top})$, equipped with the quotient topology, is  a totally disconnected space. But without assuming that the topological space $X(*)_\mathrm{top}$ is compact Hausdorff (or for instance spectral), it might not be a profinite set.
\end{remark}

\subsection{Geometric properties of p\'etra spaces}\label{Subsection 3.2}
Despite appearing somewhat exotic at first glance, we will show that p\'etra spaces of condensed sets
are in fact not very far from the types of spaces usually considered in algebraic geometry. For this, we require the following slight enlargement of the class of algebraic stacks.

\begin{definition}\label{Def of geom space}
An fpqc sheaf $\mX$ over $k$ is a \textit{geometric space} if it is isomorphic to a quotient of an affine scheme by an affine equivalence relation in the category $\mathrm{Shv}_k^\mathrm{fpqc}$.

An  fpqc sheaf $\mX$ over $k$ is an \textit{ind-geometric space over $k$}  if it is isomorphic to a filtered colimit $\mX  \simeq \varinjlim_i \mX_i$ in $\mathrm{Shv}_k^\mathrm{fpqc}$ of geometric spaces $\mX_i$ along closed immersions $\mX_i\to \mX_j$.
\end{definition}

\begin{remark}\label{Remark vs geometric stacks}
Equivalently, an accessible functor $\mX: \CAlg_k\to \mathrm{Set}$ is a geometric space if and only if it satisfies the following conditions:
\begin{enumerate}[label = (\textit{\alph*})]
\item $\mX$ satisfies fpqc descent.
\item The diagonal map $\Delta : \mX\to \mX\times \mX$ is affine.
\item There exists a $k$-algebra $A$ and a faithfully flat morphism $\Spec(A)\to \mX$.\label{Remark pt3}
\end{enumerate}
For an algebraic stack, we would only\footnote{We view the questions of whether we demand a cover by a single affine or merely a scheme, and the precise condition imposed on the diagonal, as finer points of convention and non-essential.} modify \ref{Remark pt3} to ask for an \'etale cover (or an fppf one, as turns out to be equivalent using \cite[Artin slice theorem 10.1]{Laumon-Moret-Bailly}). So while an algebraic space has an \'etale atlas, a geometric space is only guaranteed an fpqc one. Geometric spaces could therefore be called ``fpqc-algebraic space''. We have chosen instead the more evocative adjective ``geometric'', which also accords with the terminology of ``geometric stacks'' of \cite[Definition 9.3.0.1]{SAG}. Indeed, it follows from \cite[Proposition 9.1.6.9]{SAG} that our geometric spaces are precisely the $0$-truncated geometric stacks in the sense of \cite[Definition 9.1.6.2]{SAG}.
\end{remark}

\begin{remark}
The definition of ind-geometric spaces is analogous to the usual definition of ind-schemes. In particular, it is an fpqc-version of an {ind-algebraic space} in the sense of \cite[Definition 41.1]{Algebraic Magnetism}.
\end{remark}

\begin{theorem}\label{condensed is geometric}
Let $X$ be a condensed set and $\mX=X^\mathrm{p\acute{e}t}_k$ its p\'etra space over a field $k$.
\begin{enumerate}
\item If $X$ is a profinite set, then $\mX$ is affine.\label{Stone1}
\item If $X$ is a compact Hausdorff space (i.e.\ a qcqs condensed set), then $\mX$ is a geometric space.\label{Stone2}
\end{enumerate}
When $k$ is a finite field, the converse statements also hold, as well as
\begin{enumerate}
\setcounter{enumi}2
\item A condensed set $X$ is  quasi-separated if and only if $\mX$ is an ind-geometric space.\label{Stone3}
\end{enumerate}
\end{theorem}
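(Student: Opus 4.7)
The plan is to exploit the two key properties of $f^*$ from Theorem \ref{FF on topoi}: it is fully faithful and preserves all finite limits and colimits. Part (\ref{Stone1}) is essentially Proposition \ref{Stone space in terms of pi_0}, which identifies $S^{\mathrm{p\acute{e}t}}_k$ with $\Spec(\mathcal{C}(S,k))$ for any profinite $S$. For (\ref{Stone2}), I would realize a compact Hausdorff space $X$ as a quotient $S/(S \times_X S)$ in $\mathrm{Cond}$, where $S \twoheadrightarrow X$ is a surjection from a profinite set (e.g., $S = \beta X^\delta$). The fiber product $S \times_X S$ is a closed subspace of the compact Hausdorff space $S \times S$, hence itself profinite. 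Applying $f^*$ and using its preservation of colimits and finite limits, $\mX$ is realized as the quotient of the affine scheme $S^{\mathrm{p\acute{e}t}}_k$ by the affine equivalence relation $(S \times_X S)^{\mathrm{p\acute{e}t}}_k$, yielding that $\mX$ is geometric.

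For the converse of (\ref{Stone1}) when $k$ is finite, if $X^{\mathrm{p\acute{e}t}}_k \simeq \Spec(A)$, then $X \simeq f_*(\Spec(A))$ has sections $X(S) \simeq \Hom_{\CAlg_k}(A, \mathcal{C}(S,k))$ by Proposition \ref{How enh works}. Using the left adjoint $Q : \CAlg_k \to \CAlg_k^{\mathrm{Stone}}$ of Corollary \ref{left adjoint Fp} (which exists because $k$ is finite), I would rewrite this as $\Hom_{\CAlg_k^{\mathrm{Stone}}}(Q(A), \mathcal{C}(S,k))$, which by Stone duality (Theorem \ref{Stone theorem}) together with Proposition \ref{Stone as underspace} equals $\Hom_{\mathrm{ProFin}}(S, |\Spec(Q(A))|)$; this exhibits $X$ as the profinite set $|\Spec(Q(A))|$. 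For the converse of (\ref{Stone2}), I would write a geometric $\mX$ as $\Spec(B)/R$ with $R = \Spec(B) \times_\mX \Spec(B)$ affine (by the affine-diagonal characterization of Remark \ref{Remark vs geometric stacks}). By the just-proven converse of (\ref{Stone1}), both $\Spec(B)$ and $R$ come from profinite sets $T_0, T_1$ under $f^*$, and the monomorphism $R \hookrightarrow \Spec(B) \times \Spec(B)$, being preserved by $f^*$, yields a monomorphism $T_1 \hookrightarrow T_0 \times T_0$; since any continuous injection from a compact space into a Hausdorff one is a closed embedding, $T_1$ is a closed equivalence relation on $T_0$, and the quotient $X \simeq T_0/T_1$ is compact Hausdorff by classical point-set topology.

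Part (\ref{Stone3}) then results from passing the standard characterizations through $f^*$. Forward: a quasi-separated condensed set is a filtered colimit of qcqs (hence compact Hausdorff) condensed sets along closed embeddings; applying $f^*$ and combining (\ref{Stone2}) with the forthcoming closed-embedding/closed-immersion correspondence of Proposition \ref{Closed prop} delivers the defining data of an ind-geometric space. Conversely, for ind-geometric $\mX = \varinjlim \mX_i$ with closed-immersion transitions, the converse of (\ref{Stone2}) realizes each $\mX_i \simeq f^*(X_i)$ for compact Hausdorff $X_i$, and full faithfulness plus colimit preservation of $f^*$ identifies $X \simeq \varinjlim X_i$, with transitions closed embeddings (again by Proposition \ref{Closed prop}). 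I expect the main technical obstacle to be the converse of (\ref{Stone2}): specifically, recognizing that a monomorphism between profinite sets is automatically a closed embedding and concluding compact Hausdorffness of the resulting quotient. The finite-field hypothesis enters only through Corollary \ref{left adjoint Fp}, which powers the converse of (\ref{Stone1}); every other step is insensitive to it.
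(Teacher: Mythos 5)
Your approach tracks the paper's closely in most respects, with one genuinely different (and arguably cleaner) step and one genuine gap.

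For the converse of \eqref{Stone1}, you use the left adjoint $Q$ of Corollary \ref{left adjoint Fp} directly, computing $X(S) \simeq \Hom_{\CAlg_k^\mathrm{Stone}}(Q(A), \CMcal C(S,k)) \simeq \Hom_\mathrm{ProFin}(S, |\Spec Q(A)|)$. The paper instead routes through Proposition \ref{Affinization is pi_0} and the profinite completion $\widehat\pi_0$, arguing that $\mX$ affine forces $\mX = \mathrm{Aff}(\mX) = \widehat\pi_0(X)^\mathrm{p\acute{e}t}_k$ and then invoking full faithfulness. Both are correct; yours is more direct once $Q$ is in hand. Your forward directions of \eqref{Stone1} and \eqref{Stone2} are essentially identical to the paper's, and your use of $\beta X^\delta \twoheadrightarrow X$ is the standard construction behind \cite[Theorem 2.16]{Condensed}. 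For \eqref{Stone3}, the paper uses Lemma \ref{Lemma on closed immersions} directly rather than the later Proposition \ref{Closed prop}; since the latter's proof only invokes parts \eqref{Stone1} and \eqref{Stone2}, your citation is not circular, but Lemma \ref{Lemma on closed immersions} is the more natural reference at this point in the text.

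The gap is in the converse of \eqref{Stone2}. You write $\mX = \Spec(B)/R$ with $R = \Spec(B)\times_\mX\Spec(B)$ affine and then assert ``by the just-proven converse of \eqref{Stone1}, both $\Spec(B)$ and $R$ come from profinite sets under $f^*$.'' But the converse of \eqref{Stone1} only applies to affine schemes that are \emph{already known to be p\'etra spaces}, i.e.\ of the form $Y^\mathrm{p\acute{e}t}_k$ for some condensed set $Y$. An arbitrary affine fpqc atlas $\Spec(B)\to\mX$ need not have $B$ a Stone $k$-algebra, and hence $\Spec(B)$ need not be a p\'etra space; nothing in your argument supplies this. The quasi-separatedness of $X$ does follow easily (for $S, S'\to X$ from profinite sets, $f^*(S\times_X S')$ is affine because $\mX$ has affine diagonal, so $S\times_X S'$ is profinite by the converse of \eqref{Stone1}), but quasi-compactness of $X$ is exactly where you need to produce a profinite cover, and that requires choosing the atlas of $\mX$ to be by a Stone $k$-algebra. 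This can be arranged: e.g., given any atlas $\Spec(B)\to\mX$, pass to a further fpqc cover $B\to B'$ on which the section of $\mX$ lifts to the unsheafified presheaf $B'\mapsto X(\pi_0(|\Spec(B')|))$ of Proposition \ref{Stone space in terms of pi_0}, so that $\Spec(B')\to\mX$ factors through $\Spec((B')^\circ)\to\mX$; since $(B')^\circ\to B'$ is faithfully flat (Proposition \ref{Pearl as a subring} and Lemma \ref{FF for Stone}), descent of faithful flatness shows $\Spec((B')^\circ)\to\mX$ is a Stone atlas. Alternatively, use the Stone-core colimit presentation of Proposition \ref{Stone core desc} together with quasi-compactness of $\mX$ to extract a finite (hence affine Stone) cover. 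Either way, this step needs to be supplied before the converse of \eqref{Stone1} can be invoked.
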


\begin{proof}
Assertion \eqref{Stone1} follows from Proposition \ref{Affinization for infinite fields}. When $k$ is a finite field, we have by Proposition \ref{Affinization is pi_0} that
$$
\mX\simeq \mathrm{Aff}(\mX) \simeq \widehat\pi_0(X)^\mathrm{p\acute{e}t}_k,
$$
so if $\mX$ is affine, it is isomorphic to the p\'etra space of the profinite set $\widehat\pi_0(X)$. Since the p\'etra space functor is fully faithful by Theorem \ref{FF on topoi}, this is the case precisely when $X\simeq \widehat\pi_0(X)$, i.e.\ when $X$ is profinite.

For assertion \eqref{Stone2}, recall from \cite[Theorem 2.16]{Condensed} that any compact Hausdorff space $X$ may be obtained as the quotient in condensed sets $X= S/ R$ of a profinite set $S$ under a profinite equivalence relation $R\subseteq S\times S$. Conversely, since qcqs condensed sets contain profinite sets and are closed under quotients, all such quotients $S/R$ are compact Hausdorff spaces. The formation of p\'etra spaces from condensed sets preserves both finite limits and colimits, hence equivalence relations and quotients by them. Assertion \eqref{Stone2} thus follows from  \eqref{Stone1} by the definition of geometric spaces. The same is true for the converse assertion when $k$ is finite.

The argument for assertion \eqref{Stone3} uses the identification of \cite[Proposition 2.9]{Complex} between quasi-separeted condensed sets and the subcategory of $\mathrm{Ind}(\mathrm{CHaus})$ spanned by filtered colimits of compact Hausdorff spaces $\varinjlim_i X_i$ along injective transition maps $X_i\to X_j$. Assertion \eqref{Stone3} now follows from the next Lemma.
\end{proof}

\begin{lemma}\label{Lemma on closed immersions}
A continuous map between compact Hausdorff spaces $X\to Y$ is injective if and only if the morphism between the corresponding p\'etra spaces $\mX\to \mY$ over a finite field $k$ is a closed immersion.
\end{lemma}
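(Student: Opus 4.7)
My plan is to prove both implications separately, relying on the preservation of finite limits and full faithfulness of the p\'etra functor from Theorem \ref{FF on topoi}, combined with the explicit description of p\'etra spaces of profinite sets as spectra of continuous-function algebras from Proposition \ref{Stone space in terms of pi_0}.

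For the forward implication, suppose $f : X \to Y$ is an injective continuous map between compact Hausdorff spaces; standard point-set topology upgrades this automatically to a closed embedding. Following \cite[Theorem 2.16]{Condensed}, I would present $Y = S/R$ as a quotient of a profinite set $S$ by a profinite equivalence relation $R \subseteq S \times S$, and form the pullback $T := S \times_Y X$ in condensed sets. Then $T$ is a closed subspace of the profinite set $S$, hence itself profinite, and $T \to X$ is surjective as the pullback of $S \twoheadrightarrow Y$. Applying the p\'etra functor to this pullback square produces, by preservation of finite limits, a pullback diagram
\[
\begin{tikzcd}
\mT \ar[r] \ar[d] & \mS \ar[d] \\
\mX \ar[r] & \mY
\end{tikzcd}
\]
in $\mathrm{Shv}_k^\mathrm{fpqc}$, in which $\mS$ and $\mT$ are affine by part \eqref{Stone1} of Theorem \ref{condensed is geometric}.

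Under the identification $\mS \simeq \Spec(\CMcal C(S, k))$ of Proposition \ref{Stone space in terms of pi_0}, the closed inclusion $T \hookrightarrow S$ corresponds to the restriction map $\CMcal C(S, k) \to \CMcal C(T, k)$, which I claim is surjective: any continuous function $T \to k$ has finite image by compactness and discreteness of $k$, hence factors through a finite quotient of $T$, and every such quotient is the restriction of some finite quotient of $S$ so the factorization extends. Therefore $\mT \to \mS$ is a closed immersion of affine schemes. The presentation $Y = S/R$ exhibits $\mY$ as a geometric space with affine atlas $\mS \to \mY$ (in the sense of Definition \ref{Def of geom space}), so invoking the fpqc-local nature of closed immersions on the target lets me descend the closed immersion $\mT \to \mS$ to the desired conclusion that $\mX \to \mY$ is itself a closed immersion.

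For the reverse implication, I argue formally: a closed immersion in $\mathrm{Shv}_k^\mathrm{fpqc}$ is in particular a monomorphism, and full faithfulness of the p\'etra functor (Theorem \ref{FF on topoi}) then implies that the original map $X \to Y$ is a monomorphism in $\mathrm{Cond}$. Evaluating at the terminal profinite set $*$ yields injectivity of $X(*) \to Y(*)$, which for compact Hausdorff spaces coincides with injectivity of $X \to Y$ on underlying sets. The main obstacle I anticipate is the fpqc descent step in the forward direction: one must verify rigorously that $\mS \to \mY$ is an fpqc cover in the sense relevant for descent, and justify that closed immersion is local on the target for this topology. Once that technical point is settled, the rest is a straightforward combination of standard topological facts with the categorical properties of the p\'etra functor already established, with the reverse direction being essentially formal.
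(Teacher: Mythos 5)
Your proof is correct, but the reverse direction (closed immersion implies injective) takes a genuinely different route from the paper's. The paper reduces to the case $\mY = \Spec(A)$ affine, observes that a closed immersion $\mX \to \Spec(A)$ forces $\mX$ to be affine, invokes the converse of part \eqref{Stone1} of Theorem~\ref{condensed is geometric} --- which is precisely where the finiteness of $k$ enters --- to conclude that $X$ is profinite, and then appeals to Stone duality being an anti-equivalence, carrying the surjection $A \to B$ to an injection of profinite sets. You argue instead purely formally: closed immersions of fpqc sheaves are monomorphisms, the fully faithful p\'etra functor reflects monomorphisms, and a monomorphism in $\mathrm{Cond}$ is detected on underlying sets by evaluating at $*$. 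This is more elementary and never invokes finiteness of $k$, so it in fact establishes the reverse implication over an arbitrary field. The forward directions are closer in spirit --- both reduce to the profinite atlas $S$ of $Y$ --- but your Tietze-type claim that the restriction $\CMcal C(S,k)\to\CMcal C(T,k)$ is surjective for $T\subseteq S$ closed (with $k$ discrete) is an equivalent repackaging of the paper's observation that closed subsets of $\left|\Spec(A)\right|$ correspond to reduced closed subschemes and that Stone $k$-algebras are closed under quotients; the paper's version is slightly slicker since it sidesteps the explicit clopen-partition extension, which you should in any case spell out (separate the finitely many disjoint closed fibers of $g: T \to k$ inside $S$ by clopens, using compactness and total disconnectedness). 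The descent step you flagged as a potential obstacle is indeed available: $\mS\to\mY$ is a faithfully flat cover because $\mY$ is a geometric space by part \eqref{Stone2} of Theorem~\ref{condensed is geometric}, and being a closed immersion is fpqc-local on the base, so checking the pullback $\mT\to\mS$ suffices.
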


\begin{proof}
Since the property being a closed immersion is local on the base, and $\mY$ admits by point \eqref{Stone2} of Theorem \ref{condensed is geometric} an fpqc cover by an affine, it suffices to assume that $\mY=\Spec(A)$, and so by the converse of point \eqref{Stone1}  of Theorem \ref{condensed is geometric} (applicable because $k$ is a finite field) that $Y$ is profinite. If $\mX\to \Spec(A)$ is a closed immersion, then $\mX=\Spec(B)$ is affine so, again by the converse of assertion \eqref{Stone1}  of Theorem \ref{condensed is geometric}, the corresponding condensed set $X$ is profinite. Stone duality is an anti-equivalence of categories, so it sends the epimorphism $A\to B$ in $\CAlg_k^\mathrm{Stone}$ to a monomorphism in $\mathrm{ProFin}$, i.e.\ an injective map.

Conversely, let $X\to Y$ be an injective continuous map into a profinite set. Since any continuous injection between compact Hausdorff spaces is a closed embedding, we may identify $X$ with a closed subspace of $Y$. Since $Y\simeq\left |\Spec(A)\right |$ for a Stone $k$-algebra $A$, the closed subspace $X$ corresponds to a Zariski closed subset. That is to say, to a reduced closed subscheme, i.e.\ to a surjective $k$-algebra map $A\to B$ into a reduced $k$-algebra $B$. Stone $k$-algebras are closed under quotients, so $B$ is a Stone $k$-algebra, and hence $Y\simeq \left |\Spec(B)\right |$ is a profinite space. It follows from Proposition \ref{Stone space in terms of pi_0} that the map between p\'etra space induced by $X\to Y$ is then $\Spec(B)\to \Spec(A)$, which is indeed a closed immersion.
\end{proof}

\begin{remark}
When viewed as condensed sets, all compactly generated weakly Hausdorff topological spaces are quasi-separated by \cite[Theorem 2.16]{Condensed}. Theorem \ref{condensed is geometric} therefore implies that their p\'etra spaces over a finite field are always ind-geometric.
\end{remark}

\begin{corollary}
Let $k$ be a finite field.
The p\'etra space over $k$ of any condensed space is isomorphic to a quotient, formed in the category $\mathrm{Shv}_k^\mathrm{fpqc},$ of an ind-geometric space by an ind-geometric equivalence relation.\label{Stoneii}
\end{corollary}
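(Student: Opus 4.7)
The plan is to present any condensed set $X$ as a quotient $X \simeq Y/E$ in $\mathrm{Cond}$, where both $Y$ and the equivalence relation $E \hookrightarrow Y \times Y$ are \emph{quasi-separated}, and then to apply the p\'etra space functor $f^* : \mathrm{Cond} \to \mathrm{Shv}_k^\mathrm{fpqc}$. Because $f^*$ preserves colimits and finite limits by Theorem~\ref{FF on topoi}, it carries this presentation to $\mX \simeq f^*(Y)/f^*(E)$ with $f^*(E) \hookrightarrow f^*(Y) \times f^*(Y)$ still an equivalence relation. Part~(3) of Theorem~\ref{condensed is geometric} (which is where the assumption that $k$ is finite intervenes) then promotes both $f^*(Y)$ and $f^*(E)$ to ind-geometric spaces, yielding the claim.

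For the presentation itself, I would fix a strongly inaccessible cardinal $\kappa$ with $X \in \mathrm{Cond}_\kappa$ and take the canonical Yoneda surjection
$$
Y \;:=\; \bigsqcup_{\substack{S \,\in\, \mathrm{ProFin}_\kappa \\ s \,\in\, X(S)}} S \;\twoheadrightarrow\; X,
$$
together with its kernel pair $E := Y \times_X Y$; effectivity of the topos $\mathrm{Cond}_\kappa$ identifies $Y/E$ with $X$. Note that $Y \times Y$ is again a small disjoint union of profinite sets, since the product of two profinite sets is profinite.

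The substantive verification is the quasi-separatedness of $Y$ and $E$. For $Y = \bigsqcup_i S_i$ (and equally for $Y \times Y$): any map from a profinite $T$ must factor through a finite subunion, because its composition with $Y \to I$ into the discrete indexing set has compact hence finite image; consequently any fiber product $T_1 \times_Y T_2$ is a closed subspace of the compact Hausdorff $T_1 \times T_2$ and therefore qcqs, proving that the diagonal $\Delta_Y$ is quasi-compact. For $E$: since $E \hookrightarrow Y \times Y$ is a monomorphism into a quasi-separated condensed set, its own diagonal $\Delta_E$ is the base change of the quasi-compact map $\Delta_{Y \times Y}$ along $E \times E \to (Y \times Y) \times (Y \times Y)$, and quasi-compactness is preserved under base change. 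I do not expect any substantial obstacle beyond careful bookkeeping with the cardinal $\kappa$, which is needed only to ensure that $Y$ is a legitimate small object of $\mathrm{Cond}_\kappa$.
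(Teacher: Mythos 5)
Your proof is correct and follows the same route as the paper: present any condensed set as a quotient of a quasi-separated condensed set by a quasi-separated equivalence relation, then apply part (3) of Theorem~\ref{condensed is geometric} together with the colimit- and finite-limit-exactness of $f^*$ from Theorem~\ref{FF on topoi}. The only difference is that the paper outsources the existence of the quasi-separated presentation to the citation \cite[discussion above Definition 2.10]{Complex} (in the form: disjoint unions of profinite sets are quasi-separated, and subobjects of quasi-separated condensed sets are quasi-separated), whereas you reconstruct those two observations explicitly via the Yoneda surjection and the base-change argument for $\Delta_E$; the substance is identical.
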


\begin{proof}
This follows from part \eqref{Stone3} of Theorem \ref{condensed is geometric} and the observation of \cite[discussion above Definition 2.10]{Complex} that, since disjoint unions of profinite sets are quasi-separated and any subobject of a quasi-separated condensed set is still quasi-separated, we may express any condensed set as a quotient of a quasi-separated consensed set by a quasi-separated equivalence relation. 
\end{proof}

\begin{remark}
We have chosen our terminology to accord with \cite{SAG}, but there is another common convention on the meaning of ``geometric" spaces or stacks from \cite{Simpson}, \cite{TV}, and \cite{German}. In that language, the geometric spaces in the sense of Definition \ref{Def of geom space} would be called quasi-compact geometric spaces with affine diagonal, while the ind-geometric spaces of Definition \ref{Def of geom space} would still fall inside the domain of geometric spaces. In particular, an ind-geometric space is still geometric in the sense that it admits a cover by (possibly infinitely many) affines, or equivalently of a (possibly non-affine) scheme.
\end{remark}

Though Theorem \ref{condensed is geometric} shows that p\'etra spaces are reasonable algebro-geometric objects, they are seldom actual nice algebraic spaces. In fact, the characterization of when a p\'etra space is an algebraic space locally of finite presentation may be viewed as an instance of Stone duality. Here a condensed set is said to be \textit{discrete} if it belongs to the essential image of the fully faithful left adjoint to the ``underlying set'' functor $\mathrm{Cond}\to\mathrm{Set}$, $X\mapsto X(*)$.

\begin{prop}\label{non-algebraic}
Let $X$ be a condensed set. Its p\'etra space $\mX$ over a field $k$ is an algebraic space locally of finite presentation over $\Spec(k)$ if and only if $X$ is discrete.
\end{prop}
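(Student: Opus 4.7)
The plan is to reduce, via Proposition \ref{Prop points unit}, to analyzing the condensed set $X\simeq f_*(\mX) = \underline{\mX(k)}$; the assertion that $X$ is discrete then amounts to $\underline{\mX(k)}$ agreeing with the discrete condensed set on the ordinary set $\mX(k)$.

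For the direction ($\Leftarrow$), first I would write a discrete condensed set $X = \underline{T}$ as the coproduct $\coprod_{t\in T}\ast$ formed in $\mathrm{Cond}$. Since the p\'etra space functor $f^*$ is a left adjoint, and since the p\'etra space of the terminal condensed set $\ast$ is $\Spec(k)$ (apply Proposition \ref{Stone space in terms of pi_0} to $S=\ast$), this yields $\mX \simeq \coprod_{t\in T}\Spec(k)$ in $\mathrm{Shv}_k^{\mathrm{fpqc}}$. The latter is a $k$-scheme that is tautologically locally of finite presentation, and in particular an algebraic space locally of finite presentation over $\Spec(k)$.

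For the direction ($\Rightarrow$), suppose $\mX$ is an algebraic space locally of finite presentation over $\Spec(k)$. By Proposition \ref{How enh works}, the enhanced $k$-points evaluated at a profinite set $S=\varprojlim_i S_i$ (with $S_i$ finite) give $\underline{\mX(k)}(S)=\mX(\CMcal{C}(S,k))$, while the discrete condensed set on $\mX(k)$ sends $S$ to the set $\varinjlim_i \mX(k)^{S_i}$ of locally constant maps $S\to \mX(k)$. I would then use the formula $\CMcal{C}(S,k)=\varinjlim_i k^{S_i}$ from the proof of Theorem \ref{Stone theorem}, together with the identification $\Spec(k^{S_i})\simeq \coprod_{S_i}\Spec(k)$, whose combination with the fpqc sheaf property of $\mX$ yields $\mX(k^{S_i})=\mX(k)^{S_i}$; coupling these ingredients with the fact that $\mX$ commutes with filtered colimits of $k$-algebras, I obtain
$$
\mX(\CMcal{C}(S,k)) \;=\; \varinjlim_i \mX(k^{S_i}) \;=\; \varinjlim_i \mX(k)^{S_i},
$$
which identifies $\underline{\mX(k)}$ with the discrete condensed set on $\mX(k)$, as desired.

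The principal input that requires a careful citation is the claim that an algebraic space locally of finite presentation over $\Spec(k)$ commutes, as an fpqc sheaf on $\CAlg_k$, with filtered colimits of $k$-algebras. This is classical for schemes and extends to algebraic spaces via \'etale descent from a smooth atlas, as recorded in \cite[\href{https://stacks.math.columbia.edu/tag/049N}{Tag 049N}]{stacks-project}. This is the step where the locally-of-finite-presentation hypothesis on $\mX$ enters essentially, and I expect it to be the main (though routine) obstacle; once granted, the remainder of the argument is an unwinding of Propositions \ref{How enh works}, \ref{Prop points unit}, and the Stone duality setup.
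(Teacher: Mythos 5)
Your proof is correct, and it takes a genuinely different route from the paper's. The paper argues structurally: Stone algebras are weakly \'etale, so $L_{\mX/k}=0$; together with local finite presentation this forces $\mX\to\Spec(k)$ to be \'etale; an \'etale algebraic space is a scheme; an \'etale $k$-scheme is a disjoint union of spectra of finite separable extensions of $k$; and since those extensions must themselves be Stone $k$-algebras they are all equal to $k$, yielding $\mX\simeq\coprod_I\Spec(k)$. Your argument bypasses all of the cotangent-complex and \'etale structure theory: you use instead the functor-of-points characterization that an algebraic space locally of finite presentation commutes with filtered colimits of rings, so
$$
\underline{\mX(k)}(S)\;=\;\mX(\CMcal C(S,k))\;=\;\mX\bigl(\varinjlim_i k^{S_i}\bigr)\;=\;\varinjlim_i\mX(k^{S_i})\;=\;\varinjlim_i\mX(k)^{S_i},
$$
which is precisely the discrete condensed set on $\mX(k)$. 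Combined with the unit isomorphism $X\simeq\underline{\mX(k)}$ of Proposition~\ref{Prop points unit} (valid because $\mX$ is by hypothesis a p\'etra space, hence in the image of the fully faithful $f^*$), this gives the forward direction, and the converse is the same easy coproduct computation in both proofs. Your version is shorter and more elementary, using only the finite-presentation hypothesis directly; the paper's version yields extra structural information (that $\mX$ is \'etale over $\Spec(k)$, and more precisely a scheme). One small nit: the Stacks Project tag you cite for ``locally of finite presentation $\Leftrightarrow$ commutes with filtered colimits'' should be double-checked (the scheme version is around Tag 01ZC; the algebraic-space version appears in the chapter on limits of algebraic spaces), but the fact itself is standard and your use of it is sound.
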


\begin{proof}
Since Stone algebras are always weakly \'etale, it follows by   \cite[\href{https://stacks.math.columbia.edu/tag/08R2}{Lemma 08R2}]{stacks-project} that $L_{\mX/k} = 0$, i.e.\ its cotangent complex vanishes. By
\cite[\href{https://stacks.math.columbia.edu/tag/04G9}{Lemma 04G9}]{stacks-project}
this implies $\mX\to\Spec(k)$ is formally unramified, and since it is weakly \'etale it is in particular also flat, hence it is formally \'etale by \cite[\href{https://stacks.math.columbia.edu/tag/0615}{Lemma 0615}]{stacks-project}
 It is by assumption also locally of finite presentation, thus it is \'etale by \cite[\href{https://stacks.math.columbia.edu/tag/0616}{Lemma 0616}]{stacks-project} and so 
  by \cite[\href{https://stacks.math.columbia.edu/tag/03KX}{Lemma 03KX}]{stacks-project} actually a scheme. Any \'etale scheme $\mX\to\Spec(k)$ is by \cite[\href{https://stacks.math.columbia.edu/tag/02GL}{Lemma 02GL}]{stacks-project} of the form
  $$
  \mX\simeq \coprod_{i\in I}\Spec(L_i)
  $$
  for separable field extensions $L_i/k$. But since these $L_i$ must also be Stone $k$-algebras, so (e.g.\ as seen in Example \ref{Example extension}) the only possibility is that $L_i=k$ for all $i$. That means that
  $$
  \mX\simeq \coprod_{i\in I} \Spec(k)
  $$
  and due to p\'etra space functor commuting with colimits, this is the p\'etra space of the discrete condensed set $I$. Conversely, the latter is obviously an algebraic space and locally of finite presentation, showing those to be the only algebraic p\'etra spaces.
\end{proof}

\begin{remark}
The conclusions of Proposition \ref{non-algebraic} and point \eqref{Stone1} of Theorem \ref{condensed is geometric} might seem at odds. On the condensed side, they respectively concern discrete and profinite sets, while on the algebro-geometric side, they concern affine schemes and algebraic spaces locally of finite presentation. The common intersection between these two cases on the condensed side is finite sets, while on the algebro-geometric side it is affine schemes of finite presentation. It might seem like the latter class is much broader - but not if we remember that we must only consider those finitely-presented affine schemes that arise from a Stone $k$-algebra. For those, a finite presentation assumption is highly restrictive, and in fact reduces to the case where $A=k^S$ for some finite set $S$. That indeed matches the finite sets appearing on the condensed side, and so there is no discrepancy. 
\end{remark}

\begin{remark}
We have seen in the proof of Proposition \ref{non-algebraic} that $L_{\mX/k}$ holds for any p\'etra space. It follows from the fundamental cofiber sequence for cotangent complexes that any map of p\'etra spaces $\mX\to \mY$ satisfies $L_{\mX/\mY}=0$, and is therefore formally \'etale in this sense.
\end{remark}

\subsection{Closed embeddings and closed immersions}\label{Subsection 3.3}We next turn to the question of how Stone duality interacts with embeddings.

\begin{definition}[{\cite[Appendix to Lecture IV]{Analytic}}]
A \textit{closed embedding of condensed sets} $Z\to X$ is 
 a quasi-compact injection of condensed sets.
\end{definition}

 It is shown in \cite[Proposition 4.13]{Analytic} that closed embeddings are fully determined by closed subsets of the underlying topological space $Z(*)_\mathrm{top}\subseteq X(*)_\mathrm{top}$. In particular, the notion of a closed embedding recovers its usual topological meaning for compactly generated weak Hausdorff spaces.

\begin{lemma}\label{Lemma on closed condensed}
A map of condensed sets $Z\to X$ is a closed embedding if and only if it satisfies the following property: for any map from a profinite set $S\to X$, the pullback $S\times_X Z\to S$ is a closed embedding of profinite sets.
\end{lemma}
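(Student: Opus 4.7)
The plan is to prove the two directions separately, using the defining property of a closed embedding as a quasi-compact injection, together with the characterization of qcqs condensed sets as compact Hausdorff spaces already appealed to in \cite[Theorem 2.16]{Condensed}.

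For the forward direction, suppose $Z\to X$ is a closed embedding. Both injections (as monomorphisms in a topos) and quasi-compact morphisms are stable under base change, so for any map $S\to X$ from a profinite set $S$, the pullback $S\times_X Z\to S$ is again a quasi-compact injection, hence a closed embedding of condensed sets. It remains to identify $S\times_X Z$ with an honest profinite set. Since it is qcqs (quasi-separated because it embeds into the quasi-separated $S$), it is a compact Hausdorff space by \cite[Theorem 2.16]{Condensed}, and the injection into the totally disconnected $S$ makes it totally disconnected, hence profinite. The map to $S$ is therefore a closed embedding of profinite sets (indeed, a continuous injection between compact Hausdorff spaces is automatically a closed embedding).

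For the converse, assume the pullback condition and verify the two properties separately. Injectivity of $Z\to X$ as a map of sheaves amounts to the condition that $Z(S)\to X(S)$ be injective for every profinite $S$, which unwinds precisely to the statement that $S\times_X Z\to S$ is a monomorphism for every map $S\to X$ from a profinite $S$. Since a closed embedding of profinite sets is a monomorphism, our hypothesis delivers this. Quasi-compactness of $Z\to X$ is by definition the condition that the pullback $S\times_X Z$ be a quasi-compact condensed set whenever $S$ is profinite, which the hypothesis gives since profinite sets are in particular quasi-compact.

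The only real subtlety is the identification of qcqs condensed subobjects of profinite sets with profinite sets in the forward direction; once that is in place, both implications reduce to standard pullback-stability and site-theoretic detection of monomorphisms and quasi-compactness on the basis of profinite sets, so no further calculation is required.
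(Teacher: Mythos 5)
Your proof is correct and follows essentially the same route as the paper's: both directions reduce to treating quasi-compactness and injectivity separately, using pullback-stability in the forward direction and detection along the generating family of profinite sets in the converse, together with the identification of qcqs condensed subobjects of profinite sets with (profinite) compact Hausdorff spaces. If anything, your treatment of the converse is more careful: you correctly observe that injectivity of $Z\to X$ as a morphism of sheaves follows from monicity of $S\times_X Z\to S$ over \emph{all} profinite $S$, whereas the paper's proof cites only the case $S=*$ for this step, which as written is an abbreviation that does not by itself establish that $Z(S)\to X(S)$ is injective for general $S$; your argument supplies the missing justification.
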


\begin{proof}
Given a closed embedding $Z\to X$, quasi-compactness implies that $S\times_X Z$ is a profinite set for any map $S\to X$ with $S\in \mathrm{ProFin}$. Injectivity of $Z\to X$ implies that $Z\times_X S\to S$ is injective as well. But since any continuous map between compact Hausdorff spaces is automatically proper,
a map of profinite sets is injective if and only if it is a closed embedding. Closed embeddings $Z\to X$ therefore satisfy the desired property.

Conversely, assume that a map of condensed sets $Z\to X$ satisfies the property that $Z\times_X S\to S$ is a closed embedding for any map from a profinite set $S\to X$. By taking the profinite set $S=*$, since its closed subsets are only the empty set and itself, we find that the map $Z\to X$ is injective. As we already noted above, $Z\times_X S$ being a profinite set also implies (and is equivalent to) quasi-compactness of $Z\to X$. Thus such a map of condensed sets $Z\to X$ is a closed embedding as desired.
\end{proof}

On the algebro-geometric side, the corresponding notion of a \textit{closed immersion} can be defined via descent from affines, thanks to the notion of a closed immersion being fpqc-local by \cite[\href{https://stacks.math.columbia.edu/tag/02L6}{Lemma 02L6}]{stacks-project}. Closed immersions, e.g.\ of schemes, can in general be much richer than mere closed subsets of underlying topological spaces, remembering infinitesimal information encoded as nilpotent fuzz. We show that for p\'etra spaces, this is not the case.

\begin{prop}\label{Closed prop}
Let  $\mX$ be the p\'etra space of a quasi-separated condensed set
$X$ over a finite field $k$. Stone duality of Theorem \ref{FF on topoi} induces a bijection between closed embeddings of condensed sets $Z\to X$ and closed immersions of fpqc sheaves $\mZ\to \mX$.
\end{prop}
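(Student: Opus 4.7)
The plan is to build the bijection out of the adjunction from Theorem \ref{FF on topoi}: to a closed embedding $Z\hookrightarrow X$ I associate $f^*(Z)\to\mX$, and to a closed immersion $\mZ\to\mX$ I associate $f_*(\mZ)\to f_*(\mX)=X$, where the final equality is Proposition \ref{Prop points unit}. Showing these are mutually inverse reduces to three verifications: (i) $f^*$ sends closed embeddings to closed immersions; (ii) every closed immersion $\mZ\to\mX$ has $\mZ$ automatically a p\'etra space, so the counit $f^*f_*(\mZ)\to\mZ$ is an isomorphism; and (iii) the resulting map $f_*(\mZ)\to X$ is then a closed embedding.

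For (i), the idea is to verify the closed immersion property fpqc-locally on $\mX$. Since $f^*$ commutes with colimits, the presentation $X\simeq\varinjlim_{S\to X}S$ in $\mathrm{Cond}$ (as a colimit of profinite sets) yields $\mX\simeq\varinjlim_{S\to X}\Spec(\mathcal{C}(S,k))$ in $\mathrm{Shv}_k^\mathrm{fpqc}$, exhibiting $\{\Spec(\mathcal{C}(S,k))\to\mX\}$ as an fpqc-covering family indexed by maps from profinite sets. For each such $S\to X$, Lemma \ref{Lemma on closed condensed} gives that $Z\times_X S\to S$ is a closed embedding of profinite sets; combining the finite-limit preservation of $f^*$ with Lemma \ref{Lemma on closed immersions} (which invokes finiteness of $k$) upgrades this to a closed immersion $\mZ\times_\mX\Spec(\mathcal{C}(S,k))\to\Spec(\mathcal{C}(S,k))$. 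Invoking fpqc descent for the closed immersion property concludes (i).

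For (ii), given a closed immersion $\mZ\to\mX$, I would use the description of the p\'etra space $\mX$ from Proposition \ref{Stone core desc} as $\mX\simeq\varinjlim_{A\in(\CAlg_k^\mathrm{Stone})_{/\mX}}\Spec(A)$, together with universality of colimits in the sheaf topos, to obtain $\mZ\simeq\varinjlim_A(\mZ\times_\mX\Spec(A))$. Each pullback is of the form $\Spec(B_A)$ with $B_A$ a quotient of the Stone algebra $A$; closure of Stone $k$-algebras under quotients (established in the proof of Proposition \ref{Pearl as a subring}) ensures each $B_A$ is again Stone. Thus $\mZ$ is a colimit of p\'etra spaces, hence itself p\'etra, since $f^*$ is fully faithful and colimit-preserving. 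Setting $Z:=f_*(\mZ)$, the remaining item (iii) follows from Lemma \ref{Lemma on closed condensed}: the pullback $Z\times_X S\simeq f_*(\mZ\times_\mX\Spec(\mathcal{C}(S,k)))$ is the profinite set Stone-dual to a Stone quotient of $\mathcal{C}(S,k)$, and the induced map to $S$ is a closed embedding of profinite sets by the converse direction of Lemma \ref{Lemma on closed immersions}.

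The main obstacle is the fpqc descent step in (i), which formally requires that any morphism $\Spec(A)\to\mX$ from an arbitrary affine admits an fpqc refinement factoring through some $\Spec(\mathcal{C}(S,k))\to\mX$ with $S\to X$ profinite. This is built into $\mX$ being the fpqc sheafification of the presheaf $A\mapsto X(\pi_0|\Spec(A)|)$ of Proposition \ref{Stone space in terms of pi_0}, but the bookkeeping to conclude the closed immersion property of $\mZ\to\mX$ from its verification along the generating family must be handled carefully, and this is where the quasi-separation hypothesis on $X$ together with the ind-geometric structure of $\mX$ from Theorem \ref{condensed is geometric} plays its supporting role.
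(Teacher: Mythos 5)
Your proposal is correct and follows essentially the same route as the paper: reduce the closed-immersion property to pullbacks along $\Spec(A)\to\mX$ for Stone $A$, use Lemmas \ref{Lemma on closed immersions} and \ref{Lemma on closed condensed} to match closed immersions and closed embeddings at the profinite/Stone-affine level, and show that any closed immersion $\mZ\to\mX$ is automatically a p\'etra space by writing $\mX$ as a colimit of Stone spectra, invoking universality of colimits in the topos, and using that Stone $k$-algebras are closed under quotients. You are somewhat more explicit than the paper in unpacking the mutual-inverse bijection via $f^*$ and $f_*$ and in flagging where the quasi-separation hypothesis and the fpqc-local reduction require care, but no new idea or different key step is introduced.
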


\begin{proof}
Since $X$ is quasi-separated, its p\'etra space $\mX$ admits an fpqc cover by spectra of Stone $k$-algebras.
The property being a closed immersion is fpqc-local on the base, and so a map of fpqc sheaves $\mZ\to \mX$ is a closed immersion if and only if its pullback 
$\Spec(A)\times_{\mX}\mZ$ along any map $\Spec(A) \to\mX$ for $A\in\CAlg_k^\mathrm{Stone}$
is a  closed immersion into $\Spec(A)$. By combining Lemma \ref{Lemma on closed immersions} and Lemma \ref{Lemma on closed condensed}, we see that the claim of the Proposition follows if we know that $\mZ$ is itself a p\'etra space of some condensed set.

Let therefore $\mZ\to \mX$ be a closed immersion. Choose a strongly inaccessible cardinal
 $\kappa$ for which $\mX, \mZ\in \mathrm{Shv}_{k, \kappa}^\mathrm{fpqc}$. Then we can write $\mX = \varinjlim_i\Spec(A_i)$ for a small diagram of $\kappa$-small Stone $k$-algebras $A_i$. Each pullback
 $$
 \Spec(A_i)\times_{\mX}\mZ\to \Spec(A_i)
 $$
 is a closed immersion, so it is given by
 $$
 \Spec(A_i)\times_{\mX}\mZ\simeq \Spec(B_i),
 $$ where $A_i\to B_i$ is a $k$-algebra surjection. Since the class of $\kappa$-small Stone $k$-algebras is closed under quotients, we have $B_i\in\CAlg_{k, \kappa}^\mathrm{Stone}$.
We are working in the topos $\mathrm{Shv}^\mathrm{fpqc}_{k, \kappa}$, so by the universality of colimits in a topos we get
$$
\mZ\simeq (\varinjlim_i\Spec(A_i))\times_{\mX}\mZ\simeq \varinjlim_i \Spec(A_i)\times_{\mX}\mZ\simeq \varinjlim_i \Spec(B_i).
$$
It follows, since passage to p\'etra spaces commutes with colimits, that $\mZ$ is the p\'etra space of the condensed set $Z \coloneq \varinjlim_i\left|\Spec(B_i)\right |$, with the colimit formed in the category $\mathrm{Cond}_\kappa$.
\end{proof}

\begin{remark}
According to  \cite[Proposition 4.13]{Analytic}, closed embeddings $Z\to X$ into a compactly generated weak Hausdorff space $X$, viewed as a quasi-separated condensed set, correspond precisely to closed embeddings $Z(*)_\mathrm{top}\to X(*)_\mathrm{top}$ in the usual topological sense.  The same thus holds by Proposition \ref{Closed prop} for closed immersions into the p\'etra space $X^\mathrm{p\acute{e}t}_k$ over a finite field $k$.
\end{remark}

\begin{remark}
The condition that a condensed set $X$ is quasi-separated is equivalent to the diagonal map $\Delta :X\to X\times X$ being a closed embedding.
Consider its p\'etra space $\mX = X^\mathrm{p\acute{e}t}_k$  over a finite field $k$.
It follows from Proposition \ref{Closed prop} that the diagonal map $\Delta :\mX\to\mX\times \mX$ is a closed immersion. Thus the fpqc sheaf $\mX$ is not just quasi-separated, but is in fact \textit{separated}. 
\end{remark}

\subsection{Digression: open embeddings of condensed sets}\label{Subsection 3.4}
We would next like to discuss the behavior of open subsets under Stone duality. However, the notion of open embeddings of condensed sets seems not to have been considered previously.
Presumably, that is because the definition of a condensed structure, unlike that of a topology, does not rely on them.
We must
therefore spend this subsection defining this notion and comparing it with the point-set topological notion.

On the algebro-geometric side, open immersions are tied to the Zariski topology, and are well-understood. Because the property of being an open immersion is fpqc-local on the base by \cite[\href{https://stacks.math.columbia.edu/tag/02L3}{Lemma 02L3}]{stacks-project}, it can be extended to morphisms of arbitrary fpqc sheaves via descent. The following definition in the condensed world is given by direct analogy with this algebro-geometric situation.

\begin{definition}\label{Def of open imm}
A morphism of condensed sets $f:X\to Y$ is an  \textit{open embedding} if for every morphism from a profinite set $S\to X$, the pullback $X\times_Y S\to S$ is the (map of condensed set corresponding to) an open embedding of topological spaces.
\end{definition}

\begin{remark}
Because of the fact \cite[Warning 2.14]{Condensed} that not all topological spaces embed into condensed sets, caution must be exercised when attempting to define notions in condensed mathematics via point-set topology. The reason this is not an issue with Definition \ref{Def of open imm} is that compactly generated $T_1$ topological spaces do in fact embed fully faithfully into condensed sets, and are closed under taking open subspaces. Thus even though an open subspace of a profinite set might no longer be profinite itself, we may still view it as a condensed set.
\end{remark}

To justify the above definition of an open embedding as sensible, we will show that it restricts to the familiar notion of the same name on topological spaces. We will accomplish this in Proposition \ref{Open = open in top and cond}, after a preliminary digression on complements.

\begin{definition}
For an injective morphism of $\kappa$-condensed sets $Y\to X$, its \textit{complement} $X-Y:\mathrm{ProFin}_\kappa^\mathrm{op}\to\mathrm{Set}$ is defined by
\begin{equation}\label{formula for complement}
(X-Y)(S) = \{x\in X(S) : S\times_X Y = \emptyset\}
\end{equation}
 for any $\kappa$-small profinite set $S$.
This functor clearly satisfies descent with respect to the effective epimorphism topology, and therefore defines an $\kappa$-condensed set $X-Y\in \mathrm{Cond}_\kappa$ with a canonical morphism $(X-Y)\to X$.
\end{definition}

\begin{remark}\label{Complements are topological}
Consider a  continuous embedding of $\kappa$-compactly generated topological spaces $Y\to X$. Since the canonical functor $\mathrm{Top}^{\kappa-\mathrm{cg}}\to \mathrm{Cond}_\kappa$ is a right adjoint by \cite[Proposition 1.7]{Condensed}, it preserves limits and in particular pullbacks. This implies that the complement $X-Y$ of the corresponding eponymous map of $\kappa$-condensed sets is again (the $\kappa$-condensed set associated to) a topological space, and in fact agrees with the usual point-set notion of the complement $X-Y$.
\end{remark}

\begin{remark}
Recall from Proposition \ref{up the beanstalk} that the left Kan extension $\mathrm{Cond}_\kappa\to\mathrm{Cond}$, through which we view $\kappa$-condensed sets such as $X$, $Y$, and $X-Y$ as condensed sets, preserves colimits and finite limits. It follows that the formula \eqref{formula for complement} for $(X-Y)(S)$ remains valid for any (not-necessarily $\kappa$-small) profinite set $S$.
\end{remark}

It is clear from the definition of a complement \eqref{formula for complement} that for any morphism of condensed sets $X'\to X$, there is a natural isomorphism
$$
X'\times_{X}(X-Y)\simeq X'-(X'\times_X Y'),
$$
which is one sense in which the formation of complements is functorial in $X$. It is also contravariantly functorial in $Y\in \mathrm{Cond}_{/X}$, in the sense that any morphism $Y'\to Y$ induces a natural map of condensed sets
$
(X - Y)\to (X- Y').
$

Equipped with the technology of complements, we can use them to pass between open and closed embeddings in the expected manner.

\begin{prop}\label{Clomplements}
Let
$$\mathrm{Cond}_{/X}^\mathrm{open}, \mathrm{Cond}_{/X}^\mathrm{closed}\subseteq \mathrm{Cond}_{/X}
$$
denote the full subcategories spanned respectively by open and closed embeddings into a fixed condensed set $X$. The passage to complements
$$
(Y\to X)\mapsto ((X- Y)\to X)
$$
defines an anti-equivalence of categories
$$
\mathrm{Cond}_{/X}^\mathrm{open}\simeq(\mathrm{Cond}_{/X}^\mathrm{closed})^\mathrm{op}.
$$
\end{prop}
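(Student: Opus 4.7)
The plan is to establish three facts and combine them: (a) the complement functor sends open embeddings to closed ones and vice versa, (b) complementation is involutive, i.e.\ $X-(X-Y)\simeq Y$, on both open and closed embeddings, and (c) complementation is contravariantly functorial in $Y$.

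The key tool is the universal identity
\[S\times_X(X-Y)\;\simeq\;S-(S\times_X Y),\]
valid for any subobject $Y\hookrightarrow X$ and any map $S\to X$ from a profinite set; this follows directly by unwinding the defining formula \eqref{formula for complement} applied on both sides. Granting it, (a) follows from the pullback characterizations of Definition \ref{Def of open imm} and Lemma \ref{Lemma on closed condensed}: if $Y$ is open, then $S\times_X Y$ is an open subspace of the profinite $S$, so the right-hand side is its closed (hence profinite) complement, exhibiting $X-Y\to X$ as a closed embedding; the reverse direction is symmetric, with the mild observation that open subsets of profinite sets are compactly generated and $T_1$, and hence genuinely embed in $\mathrm{Cond}$. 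Functoriality (c) is also immediate: an inclusion $Y'\subseteq Y$ yields $S\times_X Y'\subseteq S\times_X Y$, so vanishing of the latter forces vanishing of the former (by strict initiality of $\emptyset$ in the topos $\mathrm{Cond}$), giving the reverse inclusion $X-Y\to X-Y'$ on complements.

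For (b), iterating the identity produces
\[S\times_X\bigl(X-(X-Y)\bigr)\;\simeq\;S-\bigl(S-(S\times_X Y)\bigr)\;\simeq\;S\times_X Y,\]
where the second isomorphism is the topological double complement identity for open or closed subspaces of the profinite set $S$. Unwinding, $s\in(X-(X-Y))(S)$ if and only if $S\times_X Y=S$, which in turn is equivalent to $s$ factoring through $Y$; hence $X-(X-Y)=Y$ as subsheaves of $X$. Full faithfulness then reduces to the fact that both subcategories are posets (embeddings being monic) and complementation reverses inclusions, while essential surjectivity is immediate from involutivity. The main obstacle is exactly at this step: the sheaf-theoretic double complement $S-(S-A)\simeq A$ genuinely fails for arbitrary sub-condensed sets $A\subseteq S$ (one sees this by pulling back $\mathbf{R}^\delta\hookrightarrow\mathbf{R}$ to a non-trivial profinite subset of $\mathbf{R}$, where local constancy obstructs the expected identification); the open/closed hypothesis is what guarantees that $A=S\times_X Y$ is a topological subspace of $S$, for which double complement in the topological sense is routine.
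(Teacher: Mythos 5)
Your proposal is correct and takes essentially the same route as the paper's proof: both hinge on the base-change identity $S\times_X(X-Y)\simeq S-(S\times_X Y)$ together with the pullback-to-profinite-sets characterizations of open and closed embeddings, reducing the claim to point-set topology on profinite sets. The only presentational difference is that the paper verifies the inverse by constructing the factorization $U\to X-(X-U)$ and checking it is an isomorphism after pulling back along maps from profinite sets, whereas you directly compute $X-(X-Y)=Y$ as subsheaves; these are the same computation, slightly repackaged, and your closing example illustrating the failure of the double-complement identity for arbitrary subsheaves is a nice complement.
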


\begin{proof}
Note first that if $X=S$ is a profinite set, then open and closed embeddings in the sense of condensed sets coincide with those of topological spaces. Likewise does passage to complements recover its usual meaning, and so the statement in question is a basic (definitional) point of point-set topology. In the rest of this proof, we show how this implies the desired result on the level of arbitrary condensed sets.

We must first show that if $Y\to X$ is an open or closed embedding, then $(X-Y)\to X$ is a closed or open embedding respectively.
For any fixed map from a profinite set $S\to X$ we have by the abstract properties of complements a canonical isomorphism $$S\times_X (X-Y)\simeq S- (S\times_X Y),$$
compatible with the structure maps into $S$. Definition \ref{Def of open imm} and Lemma \ref{Lemma on closed condensed} guarantee that the map $S\times_X Y\to S$ is respectively an open or closed embedding of topological spaces. Thus $S-(S\times_X Y)$ corresponds to a respectively closed or open embedding into the profinite set $S$, proving that $(X-Y)\to X$ is an embedding of the desired sort.

Thus passage to complements defines functors
$$
\mathrm{Cond}_{/X}^\mathrm{open}\leftrightarrows(\mathrm{Cond}_{/X}^\mathrm{closed})^\mathrm{op},
$$
and it remains to show that these are inverse to each other.
Without loss of generality, let us consider the case where we start with an open immersion $U\to X$ - the case for a closed immersion is analogous. Let $Z:=X-U$ be the corresponding closed immersion. It follows from its definition that $U\times_X Z=\emptyset$ and so the open embedding $U\to X$ factors through $(X-Z)\to X$. We wish to show that the so-obtained morphism of condensed sets $U\to (X-Z)$ is an isomorphism. Since this is a morphism in the overcateogry $\mathrm{Cond}_{/X}$, it suffices to show that the pullback
$$
U\times_X S\to (X-Z)\times_X S= S - (Z\times_XS)
$$
is an isomorphism for an arbitrary map from a profinite set $S\to X$.
Let $V:=U\times_X S$ and $K = Z\times_X S$. By the open and closed embedding properties, these may be identified with a complementary open subset $V\subseteq S$ and closed subset $K\subseteq S$. The map in question
therefore amounts to $V\to S-K$, which is an isomorphism. That completes the proof.
\end{proof}

\begin{prop}\label{Open = open in top and cond}
Let $X$ be a compactly generated $T_1$ topological space. Under the fully faithful embedding of compactly generated $T_1$ spaces into condensed sets, open embeddings of condensed sets into $X$ coincide with open embeddings of topological spaces into $X$.
\end{prop}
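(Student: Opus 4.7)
\emph{Plan.} The plan is to verify both implications directly from Definition \ref{Def of open imm} by comparing pullbacks along profinite probes, without invoking the complement/closed-embedding correspondence of Proposition \ref{Clomplements}.

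For the direction ``topological open $\Rightarrow$ condensed open'': suppose $U\subseteq X$ is open in the topological sense. Then $U$ is itself a compactly generated $T_1$ space, hence represents a condensed set under the full embedding. For any continuous $g:S\to X$ from a profinite set, I would identify the condensed-theoretic pullback $U\times_X S$ with the topological open subspace $g^{-1}(U)\subseteq S$ by computing probe sets: for every profinite $T$, a pair of continuous maps $T\to U$ and $T\to S$ agreeing after composition to $X$ is the same datum as a single continuous map $T\to g^{-1}(U)$. Thus $U\times_X S\to S$ is a topological open embedding, confirming the condensed-open property.

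For the converse, let $f:U\to X$ be an open embedding of condensed sets, and set
$$
V \coloneq \{x\in X : U\times_X\{x\}\neq\emptyset\}\subseteq X.
$$
I would first show $V$ is open. Since $X$ is compactly generated, this amounts to checking that $g^{-1}(V)$ is open in $S$ for every continuous $g:S\to X$ from a profinite set. By hypothesis, $U\times_X S\to S$ is an open embedding with image some open $V_g\subseteq S$, and tracking fibers over a point $s\in S$ (using that the pullback $U\times_X\{s\}$ is the fiber of $U\times_X S\to S$ over $s$, which factors through $\{g(s)\}\to X$) identifies $V_g = g^{-1}(V)$; hence $V$ is open. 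I would then show $U\cong V$ as sub-condensed sets of $X$ by verifying, for each profinite $T$, the chain of equalities
$$
U(T) = \{h:T\to X : U\times_X T\to T \text{ is an isomorphism}\} = \{h : h(T)\subseteq V\} = V(T),
$$
where the middle step uses that the open subspace $V_h\subseteq T$ corresponding to $U\times_X T$ equals all of $T$ precisely when $h$ lands set-theoretically in $V$, and the last equality invokes the full embedding of the compactly generated $T_1$ space $V$ (with subspace topology from $X$) into condensed sets.

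The main obstacle I anticipate is the bookkeeping around the $T_1$ hypothesis rather than a stronger weak Hausdorff assumption. The identifications above presuppose that relevant probe sets like $U(T)$ and $V(T)$ are computed as continuous maps of topological spaces, and that pullbacks of topological open embeddings along continuous maps of compactly generated $T_1$ spaces are preserved under the embedding into condensed sets. This is fine because all spaces involved---open subspaces of profinite sets and of $X$---are themselves compactly generated $T_1$ (indeed locally compact Hausdorff in the profinite case), so everything reduces to the full embedding already stipulated in the remark following Definition \ref{Def of open imm}.
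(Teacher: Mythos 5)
Your proof is correct, and it takes a genuinely different route from the paper's. The paper proves the converse direction by passing to the complement $Z = X - U$ via Proposition \ref{Clomplements}, invoking \cite[Proposition 4.13]{Analytic} to identify $Z$ with a topological closed subspace $W\subseteq X$, and then recognizing $U = X - Z = X\setminus W$ as a topological open subspace via Remark \ref{Complements are topological}. Your argument instead works directly at the level of probe sets: you extract the candidate open set $V=\{x : U\times_X\{x\}\neq\emptyset\}$, verify its openness by the fiber identification $V_h = h^{-1}(V)$ along each profinite probe $h:T\to X$, and then confirm $U(T) = V(T)$ for all $T$, which amounts to the observation that for a monomorphism $U\hookrightarrow X$ (which an open embedding is, since pullbacks $U\times_X S\to S$ are monos and monos admit at most one section), a map $h$ factors through $U$ precisely when $U\times_X T\to T$ is an isomorphism. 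What the paper's approach buys is brevity, at the cost of depending on the machinery of complements and on the external input from \cite{Analytic} about closed embeddings; what your approach buys is self-containedness and independence from Proposition \ref{Clomplements}, since you never need to dualize through closed embeddings. The one thing your write-up ought to make explicit is the monomorphism observation above, which underlies the identification $U(T) = \{h : U\times_X T\to T \text{ is an isomorphism}\}$; as stated it is used tacitly.
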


\begin{proof}
An open embedding of topological spaces into $X$ evidently satisfies Definition \ref{Def of open imm}. What therefore needs to be proved is that if $U\to X$ is an open embedding of condensed sets, then $U$ is in fact (the condensed set corresponding to) a topological space, and that the corresponding continuous map $U\to X$ is an open embedding in the usual sense.

Thus let $U\to X$ be an open embedding of condensed sets in the sense of Definition \ref{Def of open imm}.
Let $Z\coloneq X-U$ denote the complementary closed embedding of condensed sets in the sense of Proposition \ref{Clomplements}. By \cite[Proposition 4.13]{Analytic} the condensed subset $Z\subseteq X$ is determined by the closed subspace $W\coloneq Z(*)_\mathrm{top}\subseteq X$ as
$$
Z(S) = X(S)\times_{\mathrm{Hom}_\mathrm{Top}(S, X(*)_\mathrm{top})}\Hom_{\mathrm{Top}}(S, W).
$$
Since we have by the definition of the condensed set associated to a topological space that
$$X(S) = \Hom_{\mathrm{Top}}(S, X(*)_\mathrm{top}),$$
it follows that $Z = W$ is also a topological space itself.
Its (condensed-set-level) complement is by Proposition \ref{Clomplements} the original condensed set $U = X-Z$, which Remark \ref{Complements are topological} identifies with the point-set-topological complement $X-W$. In particular, it is a topological space.
\end{proof}

\begin{remark}
If we wished to dispense with the $T_1$ separation assumption, we could instead consider the case when $X$ is a $\kappa$-compactly generated space for some strong limit cardinal $\kappa$. Open embeddings into $X$ in the category $\mathrm{Cond}_\kappa$ then correspond to topological open embeddings into $X$ in the usual sense. Of course, the standard caveats
\cite[Warning 2.14]{Condensed} apply, and so this correspondence is not necessarily compatible with the passage $\mathrm{Cond}_\kappa\to \mathrm{Cond}$ without a $T_1$ assumption.
\end{remark}

\subsection{Open embeddings and open immersions}\label{Subsection 3.5}
We spent the previous subsection introducing, studying, and justifying the notion of open embeddings of condensed sets. Now we compare it with open immersions of fpqc stacks under Stone duality.

\begin{prop}\label{Open prop}
Let  $\mX$ be the p\'etra space of a quasi-separated condensed set
$X$. Stone duality of Theorem \ref{FF on topoi} induces a bijection between open embeddings of condensed sets $U\to X$ and open immersions of fpqc sheaves $\mU\to \mX$.
\end{prop}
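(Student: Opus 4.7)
The strategy mirrors the proof of Proposition~\ref{Closed prop}: I will establish the bijection locally over Stone affines and then globalize. Since open immersion is fpqc-local on the base by \cite[\href{https://stacks.math.columbia.edu/tag/02L3}{Lemma 02L3}]{stacks-project}, and the open-embedding property of Definition~\ref{Def of open imm} is by design local with respect to maps from profinite sets, the problem reduces to the following key lemma: for a map of condensed sets $U\to S$ with $S$ profinite, $U\to S$ is an open embedding if and only if the corresponding map of fpqc sheaves $\mU\to\Spec(\mathcal{C}(S,k))$ is an open immersion.

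To prove the key lemma, note that by Definition~\ref{Def of open imm} applied to the identity map $S\to S$, any open embedding $U\to S$ exhibits $U$ as an open topological subspace of $S$. On the algebro-geometric side, Proposition~\ref{Stone as underspace} gives a homeomorphism $|\Spec(\mathcal{C}(S,k))|\simeq S$, so open subschemes of $\Spec(\mathcal{C}(S,k))$ are in bijection with open subsets of $S$. Both sides of the correspondence are thus parameterized by the same data -- open subsets of the profinite set $S$ -- and it remains to verify that the p\'etra space functor implements the matching between them.

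For this last point, write $U=\bigcup_j U_j$ as the filtered union of the clopen subsets $U_j\subseteq S$ contained in $U$. Each such $U_j$ corresponds to an idempotent $e_j\in A:=\mathcal{C}(S,k)$, and its p\'etra space is the principal open $(U_j)_k^{\mathrm{p\acute{e}t}}\simeq \Spec(e_jA)\subseteq\Spec(A)$. Since the p\'etra space functor preserves colimits as a left adjoint, and $U\simeq \varinjlim_j U_j$ in condensed sets, we obtain $\mU=U_k^{\mathrm{p\acute{e}t}}\simeq \bigcup_j\Spec(e_jA)$, which is precisely the open subscheme of $\Spec(A)$ determined by $U\subseteq|\Spec(A)|$.

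The main obstacle is the last compatibility check: verifying that the filtered colimit of the clopens $U_j$ in the category of condensed sets matches the open topological subset $U\subseteq S$ viewed as a condensed set. This rests on the fact that open subsets of profinite sets are compactly generated Hausdorff spaces, and such spaces embed fully faithfully into condensed sets in a way which respects filtered colimits along open inclusions. Given the local case, the global statement then follows as in the final paragraph of the proof of Proposition~\ref{Closed prop}: present $\mX\simeq\varinjlim_i\Spec(A_i)$ as a colimit of Stone affines (obtained by writing $X$ as a colimit of profinite sets and applying the colimit-preserving p\'etra functor), invoke the key lemma on each pullback $\Spec(A_i)\times_\mX\mU\to\Spec(A_i)$, and use universality of colimits in $\mathrm{Shv}_k^\mathrm{fpqc}$ together with the colimit-preservation of $(-)_k^\mathrm{p\acute{e}t}$ to recognize $\mU$ as the p\'etra space of a unique open embedding into $X$.
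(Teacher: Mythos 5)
Your proposal is structurally parallel to the paper's: reduce to Stone affine charts using fpqc-locality of open immersions, prove a local statement, then globalize via universality of colimits and colimit-preservation of the p\'etra functor. The difference is in how the local statement is established. The paper invokes Proposition~\ref{FOP for Haus} (the explicit functor-of-points formula $U^\mathrm{p\acute{e}t}_k(A)\simeq\Hom_\mathrm{Top}(|\Spec(A)|,U)$ for compactly generated Hausdorff $U$) and recognizes it as the defining property of the open subscheme. You instead decompose the open subset $U\subseteq S$ into its directed system of clopen subsets, compute the p\'etra space of each clopen via Proposition~\ref{Stone space in terms of pi_0}, and push the colimit through the left-adjoint p\'etra functor. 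Both avoid the finiteness assumption on $k$ that would be forced if one derived the result from Proposition~\ref{Closed prop} by passing to complements, and yours is arguably more self-contained since it bypasses Proposition~\ref{FOP for Haus}. So this is a genuine variant, not a mere rewording.

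The one place where you should be more careful is the ``main obstacle'' you flag: that $U\simeq\varinjlim_j U_j$ in $\mathrm{Cond}$, where $U_j$ ranges over the clopens contained in $U$. Your justification -- that compactly generated Hausdorff spaces ``embed fully faithfully into condensed sets in a way which respects filtered colimits along open inclusions'' -- is worded as a general principle, but it is not one: the embedding of \cite[Proposition 1.7]{Condensed} is a \emph{right} adjoint, so it preserves limits, not colimits. The specific fact you need is true, but it requires a direct argument via compactness of test objects: for a profinite $T$, any continuous map $T\to U$ has quasi-compact image, which is covered by finitely many $U_j$ and hence, by filteredness, is contained in a single clopen $U_{j'}$; this shows the pointwise colimit presheaf $T\mapsto\varinjlim_j U_j(T)$ agrees with $U(T)$, and one checks it is already a sheaf (using that $U_{j'}\subseteq U$ is clopen, so a map $T\to U$ lands in $U_{j'}$ as soon as a surjection $T'\twoheadrightarrow T$ composed with it does). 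With that filled in, the argument is sound; the globalization then goes exactly as in the paper, noting also that ``universality of colimits'' must be applied inside a $\kappa$-topos $\mathrm{Shv}^\mathrm{fpqc}_{k,\kappa}$ and that full faithfulness of $(-)^\mathrm{p\acute{e}t}_k$ is what makes $i\mapsto U_i$ into a genuine diagram of condensed sets.
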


\begin{proof}
Since $X$ is quasi-separated, its p\'etra space $\mX$ admits an fpqc cover by spectra of Stone $k$-algebras.
The property being an open immersion is fpqc-local on the base \cite[\href{https://stacks.math.columbia.edu/tag/02L3}{Lemma 02L3}]{stacks-project}, and so a map of fpqc sheaves $\mU\to \mX$ is an open immersion if and only if its pullback 
$\Spec(A)\times_{\mX}\mU$ along any map $\Spec(A) \to\mX$ for $A\in\CAlg_k^\mathrm{Stone}$
is an open immersion into $\Spec(A)$.
In light of Proposition \ref{Stone space in terms of pi_0}, it is therefore clear that a morphism of condensed sets $U\to X$ is an open embedding in the sense of Definition \ref{Def of open imm} if and only if the corresponding map on p\'etra spaces $\mU\to \mX$ is an open immersion.

To conclude the proof, it remains to show that any open immersion  $\mU\to \mX$ into the p\'etra space $\mX=X^\mathrm{p\acute{e}t}_k$ is in fact a map of p\'etra spaces; that is to say, that $\mU$ is a p\'etra space itself. To see that, write
$$
\mX = \varinjlim_i\Spec(A_i)
$$
for a small diagram of Stone $k$-algebras $A_i$. Then
$$
\mU_i\coloneq\Spec(A_i)\times_{\mX}\mU
$$
is an open subscheme of $\Spec(A_i)$, which is precisely determined by the open subset
$$
U_i\coloneq \left |\mU_i\right |\subseteq \left |\Spec(A_i)\right |.
$$
Given the Zariski open $U_i,$ a map of schemes factors through the inclusion $\mU\subseteq \Spec(A)$ if and only if its underlying continuous map factors though
$
U_i\to\left |\Spec(A)\right |.
$
The topological space $U_i$ is an open subset of a compactly generated Hausdorff space, implying that it satisfies those properties itself. Its p\'etra space can therefore be described as in Proposition \ref{FOP for Haus}, which we now recognize as the just-described mapping property for $\mU_i$. That is to say, we find that $\mU_i = U_i^\mathrm{p\acute{e}t}$ are p\'etra spaces, and thanks to the faithful flatness of the p\'etra space functor by Theorem \ref{FF on topoi}, these even fit into a  diagram of condensed sets $i\mapsto U_i$.
Now we use that colimits in a topos (such as $\mathrm{Shv}_{k,\kappa}^\mathrm{fpqc}$, to which we could have reduced via strandard argument of picking $\kappa$) are universal
 to write
$$
\mU\simeq (\varinjlim_i\Spec(A_i))\times_{\mX}\mU\simeq \varinjlim_i \Spec(A_i)\times_{\mX}\mU\simeq \varinjlim_i \mU_i.
$$
Setting $U\coloneq\varinjlim_i U_i\in \mathrm{Cond}$, it now follows from the p\'etra space functor preserving colimits that $\mU\simeq U^\mathrm{p\acute{e}t}_k$, completing the proof.
\end{proof}

\begin{remark}
With the additional assumption of a finite ground field $k$,  Proposition \ref{Open prop} could be derived by passage to complements and Proposition \ref{Clomplements} from the corresponding result on closed immersions Proposition \ref{Closed prop}. The reason we instead decided to spell out the (very similar) proof, is that we could make it work without the finiteness assumption on $k$. That is because the key ingredient in the proof of Proposition \ref{Open prop} is the special description Proposition \ref{FOP for Haus}, as opposed to Lemma \ref{Lemma on closed immersions} which plays a similar role in the proof of Proposition \ref{Closed prop}, and which is where the finiteness assumption gets used.
\end{remark}

One surprising aspect of Proposition \ref{Open prop} is that the Zariski topology, which is usually viewed as too coarse and unrefined, is, in fact, capable of encoding nice topological spaces. This can be made explicit by recalling the construction of the \textit{underlying topological space} $\left |\mX\right |$ of any fpqc stack $\mX$: its points consist of field-points $\mX(L)$ for field extensions $L/k$, identified up to specialization. It is topologized by declaring the open subsets $\left |\mU\right |$ to be those subsets that arise from open immersions $\mU\to \mX$.

\begin{corollary}
Let  $\mX$ be the p\'etra space of a compactly generated Hausdorff, viewed as a quasi-separated condensed set
$X$. There is a canonical homeomorphism
$
\left |\mX\right |\simeq X.
$
\end{corollary}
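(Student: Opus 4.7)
The plan is to identify both the underlying set and the topology of $\left|\mX\right|$ with those of $X$, using Proposition \ref{FOP for Haus} for the points and Proposition \ref{Open prop} together with Proposition \ref{Open = open in top and cond} for the opens.

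For the underlying set, first I would compute field-valued points of $\mX = X^\mathrm{p\acute{e}t}_k$. By Proposition \ref{FOP for Haus}, for any field extension $L/k$ one has $\mX(L) = \Hom_\mathrm{Top}(\left|\Spec(L)\right|, X) = \Hom_\mathrm{Top}(*, X) = X(*)$, with transition maps along inclusions $L\hookrightarrow L'$ equal to the identity. Thus every field-valued point of $\mX$ is canonically labeled by a unique element of the underlying set of $X$. To see that the specialization relation collapses to the identity, I would apply Proposition \ref{FOP for Haus} to a valuation ring $R$ with fraction field $L$ and residue field $\kappa$: a continuous map from the spectral space $\left|\Spec(R)\right|$ into the Hausdorff space $X$ must be constant, since the generic point of $\left|\Spec(R)\right|$ is dense and $X$ is $T_1$. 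Hence the two induced maps $\Spec(L)\to\mX$ and $\Spec(\kappa)\to\mX$ correspond to the same element of $X(*)$, and the underlying set of $\left|\mX\right|$ is canonically bijective with $X(*)$.

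For the topology, recall that the open subsets of $\left|\mX\right|$ are by definition those of the form $\left|\mU\right|$ for open immersions $\mU\to \mX$. Since a compactly generated Hausdorff space is quasi-separated when viewed as a condensed set, Proposition \ref{Open prop} identifies such open immersions with open embeddings of condensed sets $U\to X$, and produces $\mU \simeq U^\mathrm{p\acute{e}t}_k$. Proposition \ref{Open = open in top and cond} then identifies these open embeddings of condensed sets with honest open subsets of the topological space $X$. Applying the point-set identification of the previous paragraph to $U^\mathrm{p\acute{e}t}_k$ shows that the subset $\left|\mU\right|\subseteq\left|\mX\right|$ matches $U\subseteq X$ under our bijection, so the lattices of opens agree.

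The main potential obstacle is the specialization equivalence relation in the definition of $\left|\mX\right|$, but this is neutralized by the Hausdorff hypothesis via the valuation ring argument. Combining the natural bijection of underlying sets with the compatible bijection of open subsets then yields the asserted canonical homeomorphism $\left|\mX\right|\simeq X$.
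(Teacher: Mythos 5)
Your proposal is correct and follows essentially the same route as the paper: use Proposition \ref{FOP for Haus} to compute $\mX(L) = \Hom_{\mathrm{Top}}(\left|\Spec(L)\right|, X) = X$ for any field $L$, then invoke Propositions \ref{Open prop} and \ref{Open = open in top and cond} to match the lattices of opens. The one genuine addition is your valuation-ring argument showing that the specialization relation on $\left|\mX\right|$ is trivial: the paper's proof asserts the bijection $\left|\mX\right| \simeq X$ directly from the field-point computation without commenting on the specialization quotient built into the definition of $\left|\mX\right|$, whereas you close that gap by noting that a continuous map $\left|\Spec(R)\right| \to X$ from a valuation ring spectrum into a Hausdorff (hence $T_1$) space must be constant, as the generic point is dense. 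This is a worthwhile clarification that the paper elides; otherwise the two proofs coincide.
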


\begin{proof}
Since $X$ is compactly generated Hausdorff, its $L$-valued p\'etra space $\mX$ can be described as in Proposition \ref{FOP for Haus}
$$
\mX(L)\simeq \Hom_{\mathrm{Top}}(\left |\Spec(A)\right |, X) = \Hom_{\mathrm{Top}}(*, X) \simeq X
$$
for any field $L$, and so $\left|\mX\right |\simeq X$ is a bijection. It follows from Proposition \ref{Open prop} that the opens $\left |\mU\right |\subseteq \left |\mX\right |$ then arise precisely and uniquely from open subsets $U\subseteq X$, hence the map in question is a homeomorphism.
\end{proof}

\subsection{Sheaves of modules and quasi-coherent sheaves}\label{Subsection 3.6}

We next turn to study how Stone duality extends to the natural categories of sheaves of modules on both sides. In the following, let $\CAlg(\mathrm{LPr})$ denote the category of locally presentable symmetric monoidal categories, with left-adjoint symmetric monoidal functors as morphism.

\begin{lemma}\label{What are sheaves}
Let $k$ be a ring.
There exist functors
$$
 \mathrm{Shv}_{\Mod_k} :\mathrm{Cond}^\mathrm{op}\to \CAlg(\mathrm{LPr})
 \qquad\quad
\mathrm{QCoh} : (\mathrm{Shv}_k^\mathrm{fpqc})^\mathrm{op} \to \CAlg(\mathrm{LPr}),
$$
determined uniquely up to natural equivalence by the following requirements:
\begin{enumerate}[label =(\roman*)]

\item They commute with small filtered limits.\label{WASi}

\item They respectively satisfy effective and fpqc descent. \label{WASii}

\item On affine $k$-schemes and profinite sets respectively, they restrict to the functors $\CAlg_k\ni A\mapsto \Mod_A$ and $\mathrm{ProFin}^\mathrm{op}\ni S\mapsto \mathrm{Shv}_{\Mod_k}(S)$, where the latter denotes\label{WASiii} usual category of sheaves of $k$-modules on the profinite set $S$, when viewed as a totally disconnected topological space.
\end{enumerate}
\end{lemma}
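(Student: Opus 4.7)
The strategy is to construct both functors as right Kan extensions from their defining subcategories, and derive uniqueness from the interaction of filtered-limit preservation with descent. I would first establish the building-block descent statements. On the algebro-geometric side, fpqc descent of $A \mapsto \Mod_A$ on $\CAlg_k$ is classical Grothendieck descent for modules. On the condensed side, the functor $S \mapsto \Shv_{\Mod_k}(S)$ on $\mathrm{ProFin}$ satisfies effective descent because any surjective continuous map of profinite sets is a proper quotient of compact Hausdorff spaces, and sheaves of modules satisfy proper descent on topological spaces; alternatively, one transports fpqc descent for $\Mod$ across Stone duality in the form of Proposition \ref{Prop1}, using that for a Stone $k$-algebra $A = \mathcal{C}(S,k)$ the category $\Mod_A$ is naturally equivalent to $\Shv_{\Mod_k}(S)$ via the Stone-dual identification of $A$-modules with $k$-module sheaves on $\left|\Spec(A)\right|$.

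Next, I would produce the extensions via the right Kan extension formula
\begin{equation*}
\QCoh(\mX) \coloneq \varprojlim_{\Spec(A) \to \mX,\, A \in \CAlg_{k}} \Mod_A, \qquad \Shv_{\Mod_k}(X) \coloneq \varprojlim_{S \to X,\, S \in \mathrm{ProFin}} \Shv_{\Mod_k}(S),
\end{equation*}
with indices ranging over $\kappa$-small objects mapping to a $\kappa$-bounded target, and limits formed in $\CAlg(\mathrm{LPr})$. Set-theoretic well-definedness under varying $\kappa$ is handled via Proposition \ref{up the beanstalk} together with a cofinality argument, ensuring that enlarging $\kappa$ yields canonically equivalent limits and hence functors on the uncapped categories.

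The three requirements are then checked in sequence. Condition (iii) is immediate: on a representable $\Spec(A)$ (resp.\ $S$) the indexing slice category has a terminal object $\mathrm{id}$, collapsing the limit to the prescribed value. Condition (ii) amounts to showing that \v{C}ech nerves of covers are cofinal in the relevant slice categories, so that the building-block descent already established propagates to the extension. Condition (i) follows from the compatibility of slice categories with filtered colimits of sheaves, combined with the standard fact that filtered colimits commute with cosimplicial-shaped limits in $\CAlg(\mathrm{LPr})$ in the required sense.

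For uniqueness, given any competing functor $\Phi$ satisfying (i)--(iii), one constructs a natural comparison map to the right Kan extension, which is an isomorphism on representables by (iii), then extends across covers by (ii) and across size-theoretic enlargements by (i). The main obstacle I anticipate is verifying effective descent for $\Shv_{\Mod_k}$ along arbitrary covers in $\mathrm{ProFin}$: since the effective epimorphism topology is generated by finitely-jointly-surjective families, one must check descent both for finite coproducts (essentially direct) and for single surjective maps (requiring either proper descent for sheaves or Stone-dual transport from fpqc descent for modules), and only then assemble the two into a proof of descent for arbitrary covers.
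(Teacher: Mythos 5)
Your approach — right Kan extension from the respective sites, with conditions (i)–(iii) forcing uniqueness and descent checked on the building blocks (fpqc descent for modules by Grothendieck, effective descent for $S\mapsto\mathrm{Shv}_{\Mod_k}(S)$ on profinite sets by a Haine-type proper-descent theorem) — is essentially the route the paper takes, just with the Kan-extension formula and the uniqueness comparison spelled out more explicitly than the paper's terse reduction. Note only that your alternative derivation of the topological-side descent by transporting fpqc descent across Stone duality and the identification $\Mod_{\CMcal C(S,k)}\simeq\mathrm{Shv}_{\Mod_k}(S)$ requires $k$ to be a field (since Proposition~\ref{Prop1} does), whereas the lemma is stated and the proper-descent route works over an arbitrary base ring.
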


\begin{proof}
Since the functors $\mathrm{Shv}_{\Mod_k}$ and $\QCoh$ are both required to commute with filtered colimits by \ref{WASi}, it suffices to define their restrictions to the $\kappa$-variants $\mathrm{Cond}_\kappa\subseteq\mathrm{Cond}$ and $\mathrm{Shv}_{k, \kappa}^\mathrm{fpqc}\subseteq \mathrm{Shv}_k^\mathrm{fpqc}$ for each strongly inaccessible cardinal $\kappa$, and then pass to the filtered colimit over $\kappa$. Thanks to the descent assumption of \ref{WASii}, defining the respective functors out of the Grothendieck topoi $\mathrm{Cond}_\kappa$ and $\mathrm{Shv}_{k, \kappa}^\mathrm{fpqc}$ reduces to specifying them on  the corresponding sites $\CAlg_{k, \kappa}^\mathrm{op}$ and $\mathrm{ProFin}_\kappa$. There the requirement \ref{WASiii} identifies the two functors as $A\mapsto\Mod_A$ and $S\mapsto\mathrm{Shv}_{\Mod_k}(S)$ respectively. It remains to verify that this definition on sites is compatible with requirements \ref{WASi} and \ref{WASii}. Commuting with filtered limits is well-known in both cases, see e.g.~\cite[Corollary 4.5.1.7]{SAG} for the module case. Descent for sheaves of modules on profinite spaces with respect to the effective epimorphism topology is a special case of \cite[Theorem 0.5, Remark 1.18]{Haine}, while fpqc descent for modules  \cite[\href{https://stacks.math.columbia.edu/tag/023N}{Proposition 023N}]{stacks-project} is due to Grothendieck \cite{Gro}.
\end{proof}

\begin{definition}\label{Def of QCoh and Shv}
The functors $X\mapsto\mathrm{Shv}_{\Mod_k}(X)$ and $\mX\mapsto\QCoh(\mX)$ of Lemma \ref{What are sheaves} are respectively called the
\textit{sheaves of $k$-modules on a condensed set $X$} and \textit{quasi-coherent sheaves on an fpqc sheaf $\mX$ over $k$}.
\end{definition}

\begin{remark}\label{Sheaves are sheaves}
Let $X$ be a locally compact Hausdorff space, viewed as a qcqs condensed set. 
The category $\mathrm{Shv}_{\Mod_k}(X)$ of sheaves of $k$-modules on $X$ in the sense of Definition \ref{Def of QCoh and Shv} was defined in the course of the proof of Lemma \ref{What are sheaves} through effective descent from the case of profinite sets.
By \cite[Theorem 0.5, Remark 1.18]{Haine}, $\mathrm{Shv}_{\Mod_k}(X)$ in this sense is naturally equivalent to the usual category of sheaves of $k$-modules on $X$. The latter is usually defined to consist of those presheaves $\sF:\mathrm{Op}(X)^\mathrm{op}\to \Mod_k$ from the poset  $\mathrm{Op}(X)$ of open subsets in $X$ which satisfy descent with respect to the standard Grothendieck topology on  $\mathrm{Op}(X)$.
\end{remark}

\begin{remark}\label{QCoh vs QCoh}
The above construction of quasi-coherent sheaves is analogous to the more sophisticated derived versions \cite[Section 6.2, Subsection 7.3.3]{SAG} and \cite[Chapter 3]{Dirac2}, which produce the $\i$-category of quasi-coherent sheaves\footnote{In fact, it is common in derived algebraic geometry and related fields to use the notation $\QCoh(\mX)$ for what we are calling $\mathcal D_\mathrm{QCoh}(\mX)$. Since we are working purely in the classical setting, we prefer a variant of the classical algebro-geometric notation.} $\mathcal D_\mathrm{QCoh}(\mX)$ on any derived (or spectral) stack $\mX$. These carry a well-behaved $t$-structure when $\mX$ is a geometric stack. As noted in Remark \ref{Remark vs geometric stacks}, geometric spaces in the sense of Definition \ref{Def of geom space} are special cases of geometric stacks. The category of quasi-coherent sheaves in the sense of Definition \ref{Def of QCoh and Shv} are in that case related to the $\infty$-category $\mathcal D_\mathrm{QCoh}(\mX)$ as
$$
\QCoh(\mX)\simeq\mathcal D_\mathrm{QCoh}(\mX)^\heartsuit,
$$
i.e.\ it is the heart of its natural $t$-structure.
\end{remark}

We return to Stone duality, and therefore to the standing assumption that $k$ is a field.

\begin{theorem}\label{Shv = QCoh}
For any condensed set $X$, there is a canonical equivalence of symmetric monoidal categories
$$
\QCoh(X^\mathrm{p\acute{e}t}_k)\simeq \mathrm{Shv}_{\Mod_k}(X),
$$
natural in $X$.
More precisely, the composite functor
$$
\mathrm{Cond}^\mathrm{op}\xrightarrow{(-)^\mathrm{p\acute{e}t}_k}(\mathrm{Shv}_k^\mathrm{fpqc})^\mathrm{op}\xrightarrow{\QCoh}\CAlg(\mathrm{LPr})
$$
is naturally equivalent to the functor $\mathrm{Shv}_{\Mod_k}$.
\end{theorem}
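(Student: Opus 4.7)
The plan is to invoke the uniqueness characterization from Lemma \ref{What are sheaves}: any functor $\mathrm{Cond}^\mathrm{op}\to \CAlg(\mathrm{LPr})$ that commutes with small filtered limits, satisfies effective epimorphism descent, and restricts on $\mathrm{ProFin}$ to $S\mapsto\mathrm{Shv}_{\Mod_k}(S)$ is unique up to natural equivalence. It thus suffices to verify these hypotheses for the composite $\QCoh\circ((-)^\mathrm{p\acute{e}t}_k)^\mathrm{op}$, and then to exhibit a symmetric monoidal equivalence $\QCoh(S^\mathrm{p\acute{e}t}_k)\simeq \mathrm{Shv}_{\Mod_k}(S)$ natural in $S\in \mathrm{ProFin}$.

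First, I would argue that the composite satisfies effective epi descent and commutes with filtered limits. Filtered-limit preservation follows because $(-)^\mathrm{p\acute{e}t}_k$ is a left adjoint by Theorem \ref{FF on topoi} — hence $((-)^\mathrm{p\acute{e}t}_k)^\mathrm{op}$ preserves the relevant filtered limits — combined with the fact that $\QCoh$ commutes with small filtered limits by Lemma \ref{What are sheaves}. Effective descent reduces, via Proposition \ref{Prop1} (which identifies $(\mathrm{ProFin},\tau_\mathrm{eff})$ with $((\CAlg_k^\mathrm{Stone})^\mathrm{op},\tau_\mathrm{fpqc})$), to fpqc descent of $\QCoh$, which is again part of Lemma \ref{What are sheaves}.

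Next, on profinite sets, Proposition \ref{Stone space in terms of pi_0} gives $S^\mathrm{p\acute{e}t}_k\simeq \Spec(\CMcal C(S,k))$, so the composite restricts to
$$
S\longmapsto \QCoh(\Spec(\CMcal C(S,k)))\simeq \Mod_{\CMcal C(S,k)}.
$$
For a finite set $S$ we have $\CMcal C(S,k)=k^S$, and both sides are tautologically $\prod_S\Mod_k$; this gives a symmetric monoidal equivalence natural in $S\in\mathrm{Fin}$. For a general profinite $S=\varprojlim_i S_i$, both sides convert the cofiltered limit into a filtered colimit of presentable categories along left adjoints: the quasi-coherent side via $\Mod_{\varinjlim_i k^{S_i}}\simeq \varinjlim_i\Mod_{k^{S_i}}$, and the sheaf side via the cofiltered-limit compatibility of $\mathrm{Shv}_{\Mod_k}$ built into Lemma \ref{What are sheaves}\ref{WASi}. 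Together with the equivalence on $\mathrm{Fin}$, this promotes to a symmetric monoidal equivalence natural in $S\in\mathrm{ProFin}$.

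The main obstacle is precisely this second step: upgrading the tautological agreement on $\mathrm{Fin}$ to a \emph{natural} symmetric monoidal equivalence on all of $\mathrm{ProFin}$, functorial in morphisms and not just on individual objects. Once this natural transformation between two restrictions $\mathrm{ProFin}^\mathrm{op}\to\CAlg(\mathrm{LPr})$ is in hand, extending it to a natural equivalence on the full category $\mathrm{Cond}$ is formal: both $\QCoh\circ((-)^\mathrm{p\acute{e}t}_k)^\mathrm{op}$ and $\mathrm{Shv}_{\Mod_k}$ are right Kan extensions of their restrictions to $\mathrm{ProFin}$ along $\mathrm{ProFin}\hookrightarrow\mathrm{Cond}$, by the descent and filtered-limit properties verified above. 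This right Kan extension of a natural equivalence yields the theorem.
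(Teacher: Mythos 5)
Your overall strategy — reduce via the characterization in Lemma \ref{What are sheaves} to the comparison on the generating site $\mathrm{ProFin}$, and use Proposition \ref{Stone space in terms of pi_0} to identify $\QCoh(S^\mathrm{p\acute{e}t}_k)\simeq\Mod_{\CMcal C(S,k)}$ — matches the paper's. The difference is in how the key identification $\mathrm{Shv}_{\Mod_k}(S)\simeq\Mod_{\CMcal C(S,k)}$ for profinite $S$ is obtained. You propose to start from the tautological equivalence for \emph{finite} sets and then extend to $\mathrm{ProFin}$ formally, appealing on the sheaf side to the filtered-(co)limit compatibility from Lemma \ref{What are sheaves}\ref{WASi} and on the module side to $\Mod_{\varinjlim A_i}\simeq\varinjlim\Mod_{A_i}$. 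The paper instead proves the identification directly, in a standalone lemma right after Theorem \ref{Shv = QCoh}: it exhibits an explicit inverse to the global-sections functor $\Gamma(S;-):\mathrm{Shv}_{\Mod_k}(S)\to\Mod_{\CMcal C(S,k)}$, using the basis of clopens in $S$ and the characteristic-function idempotents $\chi_U\in\CMcal C(S,k)$, and checks symmetric monoidality via $\chi_U^2=\chi_U$.

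The obstacle you yourself flag — promoting the agreement from $\mathrm{Fin}$ to a \emph{natural} symmetric monoidal equivalence on all of $\mathrm{ProFin}$ — is precisely the point your argument does not close and the paper's explicit lemma does. Appealing to Lemma \ref{What are sheaves}\ref{WASi} for the sheaf side is also a bit delicate: in the proof of that lemma, the filtered-compatibility of $S\mapsto\mathrm{Shv}_{\Mod_k}(S)$ over $\mathrm{ProFin}$ is \emph{asserted} as well-known (a reference is given only for the module side), and the content of the paper's subsequent clopen/characteristic-function lemma can be read as exactly the substance of that claim for profinite sets. So your route pushes the real work into the place where the paper chose to do an explicit computation instead. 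If you want to keep your structure, the thing to actually write down is that global sections gives, naturally in $S$, a $k$-linear symmetric monoidal equivalence $\mathrm{Shv}_{\Mod_k}(S)\simeq\Mod_{\CMcal C(S,k)}$; once that is proved directly (as the paper does), the remaining steps you outline (descent, colimit preservation of $(-)^\mathrm{p\acute{e}t}_k$, right Kan extension) go through as you describe.
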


\begin{proof}
In light of how the functors $\QCoh$ and $\mathrm{Shv}_{\Mod_k}$ are defined in Lemma \ref{What are sheaves}, and using the fact that the p\'etra space functor $(-)^\mathrm{p\acute{e}t}_k=f^*$ commutes with all colimits by Theorem \ref{FF on topoi}, it suffices to show that the functors $\CAlg_k^\mathrm{Stone}\ni A\mapsto \Mod_A\in\CAlg(\mathrm{LPr})$ and $\mathrm{ProFin}^\mathrm{op}\ni S\mapsto \mathrm{Shv}_{\Mod_k}(X)\in\CAlg(\mathrm{LPr})$ are naturally identified under Stone duality in the form of Theorem \ref{Stone theorem}. That is the content of Proposition \ref{Global functions} and the next lemma.
\end{proof}

\begin{lemma}
Let $S$ a profinite set. The global sections functor $\sF\mapsto\Gamma(S; \sF)$ is an equivalence of $k$-linear  symmetric monoidal categories
$\mathrm{Shv}_{\Mod_k}(S)\simeq \Mod_{\CMcal C(S, k)}.$
\end{lemma}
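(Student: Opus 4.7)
My plan is to produce an explicit inverse to $\Gamma(S;-)$ by exploiting the bijective correspondence between clopen subsets of the profinite set $S$ and idempotents of $\CMcal C(S,k)$ via characteristic functions. Writing $e_U \in \CMcal C(S,k)$ for the idempotent associated to a clopen $U \subseteq S$, I first construct a candidate inverse $G: \Mod_{\CMcal C(S,k)} \to \mathrm{Shv}_{\Mod_k}(S)$, and then verify that both composites are naturally isomorphic to the identity.

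To define $G(M)$ for a $\CMcal C(S,k)$-module $M$, I would set $G(M)(U) := e_U M$ for each clopen $U \subseteq S$, with restriction map $e_U M \to e_V M$ for $V \subseteq U$ given by multiplication by $e_V$. To see this defines a sheaf on the basis of clopens, I would note that any open cover of a clopen $U$ admits, by compactness of $U$ and total disconnectedness of $S$, a refinement by a finite disjoint clopen cover $U = U_1 \sqcup \cdots \sqcup U_n$. On such a cover, $e_U = e_{U_1} + \cdots + e_{U_n}$ is a decomposition into orthogonal idempotents, so that $e_U M = \bigoplus_i e_{U_i} M$, establishing the sheaf equalizer. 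The presheaf on the basis thus extends uniquely to a sheaf on $S$, and the construction is evidently functorial in $M$.

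To verify $G$ is quasi-inverse to $\Gamma$, I check the two composites directly. On one side, $\Gamma(S; G(M)) = G(M)(S) = e_S M = M$. On the other, for any $\sF \in \mathrm{Shv}_{\Mod_k}(S)$ and any clopen $U$, the sheaf condition for the disjoint clopen cover $S = U \sqcup (S \setminus U)$ yields $\sF(S) = \sF(U) \oplus \sF(S\setminus U)$, and the natural action of $\CMcal C(S,k) = \Gamma(S; \underline{k}_S)$ on $\Gamma(S;\sF)$ makes $e_U$ act as the projection onto the first summand. Thus $G(\Gamma(\sF))(U) = e_U \cdot \Gamma(S;\sF) \simeq \sF(U)$ naturally in $U$, giving a natural isomorphism $G \circ \Gamma \simeq \mathrm{id}$.

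The main (mild) obstacle lies in upgrading this to a symmetric monoidal equivalence. The sheaf tensor on $\mathrm{Shv}_{\Mod_k}(S)$ is the sheafification of the presheaf tensor over the constant sheaf $\underline{k}_S$, whereas the module tensor on the other side is $\otimes_{\CMcal C(S,k)}$; these look different \textit{a priori}. I would resolve this by checking on stalks: since $\CMcal C(S,k)$ is a Stone $k$-algebra whose residue field at any closed point $s \in S$ is $k$ itself, the stalk of $G(M)$ is $\varinjlim_{U \ni s} e_U M \simeq M \otimes_{\CMcal C(S,k)} k$, and stalks of the sheaf tensor are computed as $k$-tensors of stalks, matching those of $G(M \otimes_{\CMcal C(S,k)} N)$. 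Conservativity of stalks on the sober space $S$ (which has enough points) then yields a natural isomorphism $G(M) \otimes G(N) \simeq G(M \otimes_{\CMcal C(S,k)} N)$, so that $G$, and hence also $\Gamma$, is strong symmetric monoidal, completing the proof.
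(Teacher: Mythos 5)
Your construction of the inverse functor $G(M)(U) = e_U M$ is the same as the paper's, and the verification that $G$ and $\Gamma$ are quasi-inverse is likewise essentially identical, so the core of the argument agrees.

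The one place you genuinely diverge is the symmetric monoidal upgrade. You compute stalks: $(G(M))_s \simeq M \otimes_{\CMcal C(S,k)} k$ (using that germs of locally constant $k$-valued functions near $s$ reduce to evaluation at $s$, so the local ring of $\CMcal C(S,k)$ at $s$ is $k$), observe that stalks of the sheaf tensor are $k$-tensors of stalks, and invoke conservativity of the stalk functors. The paper instead argues directly on sections: idempotence $\chi_U^2 = \chi_U$ gives $\chi_U(M \otimes_{\CMcal C(S,k)} M') = (\chi_U M) \otimes_{\CMcal C(S,k)} (\chi_U M')$, which is then identified with sections of the sheaf tensor over $U$. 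Your stalk argument is arguably cleaner in that it sidesteps the (true, but not entirely obvious) fact that for sheaves of $k$-modules on a profinite set the tensor over $k$ and the tensor over the constant sheaf $\underline{k}_S$ agree section-wise without sheafification -- a point the paper's argument relies on implicitly. The only small caveat in your version is that you should first exhibit the natural comparison map $G(M)\otimes G(N)\to G(M\otimes_{\CMcal C(S,k)} N)$ (say, from the obvious presheaf-level multiplication maps on clopens) before applying conservativity of stalks to conclude it is an isomorphism; checking stalkwise isomorphism alone does not construct the natural transformation. That is a one-line addition, so the proof stands.
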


\begin{proof}
According to Remark \ref{Sheaves are sheaves}, sheaves of $k$-modules on $S$ can be interpreted in the usual way in terms of open subsets of $S$.  Because clopen subsets form a basis for the topology of a profinite set, it suffices to further restrict to those. A sheaf of $k$-modules $\sF$ on $S$ is thus equivalent to a functor $\sF:\mathrm{Clop}(S)^\mathrm{op}\to\Mod_k$ such that its restriction maps induce an isomorphism
$$
\sF(U\cup V) \simeq \sF(U)\oplus \sF(V)
$$
 for any pair of disjoint clopens $U, V\subseteq S$. Since this splitting is canonical, we may in particular identify the sections $\sF(U)$ over any clopen $U\subseteq S$ with a $ k$-linear subspace of the global sections $\sF(S)$ (by extending sections over $U$ to the complement $S-U$ by zero). This  is explicitly given by $\sF(U) = \chi_U\sF(S)$, where the characteristic function $\chi_U\in \CMcal C(S, k)$ is specified by $\chi_U\vert_U =1$ and $\chi_U\vert_{S-U} = 0$.

Now let $M$ be a $\CMcal C(S, k)$-module and consider the functor $\sF:\mathrm{Clop}(S)^\mathrm{op}\to \Mod_k$ defined by $\sF(U)\coloneq\chi_UM$. The functoriality comes from the fact that for an inclusion of clopens $U\subseteq V$ we have $\chi_U\chi_V = \chi_U$, so multiplication by $\chi_U$ gives rise to the restriction morphism
$$
\sF(V) =\chi_VM\xrightarrow{\chi_U\cdot-} \chi_U\chi_VM = \chi_UM=\sF(U).
$$
For a pair of disjoint clopen subsets $U, V\subseteq X$ we have $\chi_U+\chi_V = \chi_{U\cup V}$, from which it follows that $\chi_{U\cup V} M=\chi_U M\oplus \chi_VM$ and so the thus-defined presheaf on clopens is actually a sheaf on $S$. Since $\chi_S = 1$, the global sections of this sheaf are $\sF(S) = 1M = M,$ proving that we have exhibited an inverse to the global sections functor.

We have obtained the desired equivalence of categories. We will verify this equivalence is symmetric monoidal by showing that the above-defined functor $\Mod_{\CMcal C(S, k)}\to \mathrm{Shv}_{\Mod_k}(S)$ preserves tensor products.
 To that end,  note that the  idempotence of characteristic functions $\chi_U^2=\chi_U$ implies the equality
$$
\chi_U(M\o_{\CMcal C(S, k)} M') =\chi_U^2(M\o_{\CMcal C(S, k)}  M') = (\chi_U M)\o_{\CMcal C(S, k)}  (\chi_UM')
$$
for any pair of $\CMcal C(S, k)$-modules $M, M'$. By the inclusion of sections over $U$ into sections over $S$ discussed above, the tensor product on the right can be taken over $\CMcal C(U, k)$ as well. Then the right-hand side in question can be recognized as sections over $U$ of the tensor product $\sF\o_k\sF'$ of the sheaves $\sF$ and $\sF'$ associated to $M$ and $M'$ respectively.
\end{proof}

\begin{remark}
A derived-category analogue of the result of Theorem \ref{Shv = QCoh} over $k=\mathbf Z$
in considered  as ``a somewhat curious case'' in
 \cite[Exerise 1.7]{6FF}
 for $X$ a finite-dimensional locally compact Hausdorff space. 
There, the associated algebro-geometric functor to $X$ is obtained by sending an affine scheme $S$ to
$$
\mathrm{Aff}_k^\mathrm{op}\ni S\mapsto\Hom_\mathrm{Top}(\left |S\right|, X)\in \mathrm{Set}.
$$
Up to the difference of working over the absolute base $\mathbf Z$ as opposed to a field $k$, this agrees with the p\'etra space $X^\mathrm{p\acute{e}t}_k$ by Proposition \ref{FOP for Haus}.
 \end{remark}

\begin{remark}\label{Tout est plat}
Any quasi-coherent sheaf on the p\'etra space of a compact Hausdorff space over a field $k$ is flat.
Here recall that a quasi-coherent sheaf $\sF\in\QCoh(\mX)$ is flat if its restriction $x^*(\sF)$ along any morphism $x:\Spec(A)\to \mX$ is  a flat $A$-module.
Since flatness is a local property with respect to the fpqc topology \cite[\href{https://stacks.math.columbia.edu/tag/036K}{Lemma 036K}]{stacks-project}, it may be checked on fpqc covers. For any compact Hausdorff space, a continuous surjection from a profinite set gives rise to an fpqc cover $\Spec(A)\to \mX$ of its p\'etra space $\mX$ by the spectrum of a Stone $k$-algebra $A$. The flatness claim now follows from the absolute flatness of Stone algebras Proposition \ref{Stone implies abs flat}, since all of their modules are flat. Flatness of all quasi-coherent sheaves seems like a rather unusual behavior from the perspective of the algebro-geometric intuition built upon experience with schemes or algebraic spaces; but according to Proposition \ref{non-algebraic}, that has little bearing on p\'etra spaces.
\end{remark}

\subsection{Tannaka reconstruction of compact Hausdorff spaces}\label{Subsection 3.7}
In Proposition  \ref{Stone space in terms of pi_0} we described the p\'etra space of a condensed set in terms of fpqc sheafification, and we could only simplify it for profinite sets.  We use Theorem \ref{Shv = QCoh} and the recent strengthened form of Lurie's Tannaka reconstruction due to \cite{German} (really a relative version of it, deduced from Stefanich's result below as Lemma \ref{German's coup}) to give an explicit point-wise description of the p\'etra space of any compact Hausdorff space. In the following, let us denote  the category of $k$-linear locally presentable categories by  $\mathrm{LPr}_k\coloneq \Mod_{\Mod_k}(\mathrm{LPr})$.

\begin{lemma}[Tannaka Reconstruction, {\cite{German}}]\label{German's coup}
Let $\mX$ and $\mY$ be geometric spaces over a ring $k$. Pullback of quasi-coherent sheaves induces a canonical bijection
$$
\Hom_{\mathrm{Shv}_k^\mathrm{fpqc}}(X, Y)\simeq \Hom_{\CAlg(\mathrm{LPr}_k)}(\QCoh(Y), \QCoh(X)).
$$
\end{lemma}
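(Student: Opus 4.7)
The proposed bijection is pullback of quasi-coherent sheaves $f \mapsto f^*$: for any morphism $f : \mX \to \mY$, the functor $f^* : \QCoh(\mY) \to \QCoh(\mX)$ is symmetric monoidal and preserves all colimits, and hence defines an element of the right-hand side. The plan is to reduce the statement via fpqc descent to the affine-source case and then invoke Stefanich's Tannaka reconstruction theorem.

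First I would reduce to the case $\mX = \Spec(C)$ affine. Choose an affine fpqc cover $\Spec(C_0) \to \mX$; since $\mX$ has affine diagonal by Definition \ref{Def of geom space}, its \v{C}ech nerve $\Spec(C_\bullet)$ is levelwise affine, and $\mX \simeq \colim_\Delta \Spec(C_\bullet)$ in $\mathrm{Shv}_k^\mathrm{fpqc}$. Because $\Hom(-, \mY)$ converts colimits to limits, this gives $\Hom_{\mathrm{Shv}_k^\mathrm{fpqc}}(\mX, \mY) \simeq \lim_\Delta \mY(C_\bullet)$. Simultaneously, fpqc descent for $\QCoh$ from Lemma \ref{What are sheaves} yields $\QCoh(\mX) \simeq \lim_\Delta \Mod_{C_\bullet}$ in $\CAlg(\mathrm{LPr}_k)$, so that $\Hom_{\CAlg(\mathrm{LPr}_k)}(\QCoh(\mY), \QCoh(\mX)) \simeq \lim_\Delta \Hom_{\CAlg(\mathrm{LPr}_k)}(\QCoh(\mY), \Mod_{C_\bullet})$. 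Termwise comparison of these two cosimplicial limits reduces the lemma to showing, for every $k$-algebra $C$ and geometric $\mY$, that $\mY(C) \simeq \Hom_{\CAlg(\mathrm{LPr}_k)}(\QCoh(\mY), \Mod_C)$.

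This remaining affine-source bijection is the main obstacle and the essential external input, supplied by Stefanich's strengthened Tannaka reconstruction from \cite{German}. The difficulty is that $\QCoh(\mY)$ is itself presentable only as a limit $\lim_\Delta \Mod_{B_\bullet}$ for an affine cover $\Spec(B_0) \to \mY$ with affine \v{C}ech nerve, and mapping \emph{out} of a limit in $\CAlg(\mathrm{LPr}_k)$ does not simplify by formal manipulation. Stefanich's result circumvents this by identifying $\QCoh(\mY)$ with comodules over the affine Hopf algebroid $(B_0, B_1)$ and then applying the classical Tannakian statement that cocontinuous symmetric monoidal functors $\Mod_B \to \Mod_C$ biject with $\Hom_{\CAlg_k}(B, C)$ to produce the inverse map. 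Granted this identification, the descent reduction above immediately gives the lemma for general geometric $\mX$.
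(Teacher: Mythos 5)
Your approach differs from the paper's in a way worth flagging. You begin with a descent reduction to the affine-source case $\mX = \Spec(C)$, then cite Stefanich's theorem for the remaining statement. The paper instead cites \cite[Theorem 1.0.3]{German} \emph{directly} for arbitrary geometric $\mX$ and $\mY$, so your descent reduction is not needed; Stefanich's theorem already handles the non-affine source. This part of your argument is not wrong, just redundant given the strength of the cited result.

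The real issue is that you have silently assumed a $k$-linear form of Stefanich's theorem that, as the paper is at pains to point out, is not what is stated in \cite{German}. Stefanich proves the \emph{absolute} statement over $\mathbf{Z}$, i.e.
$\Hom_{\mathrm{Shv}_{\mathbf Z}^\mathrm{fpqc}}(\mX, \mY)\simeq \Hom_{\CAlg(\mathrm{LPr})}(\QCoh(\mY), \QCoh(\mX))$,
with plain $\mathrm{LPr}$, not $\mathrm{LPr}_k$, and plain $\mathbf Z$-scheme maps, not $k$-scheme maps. The entire content of the paper's proof is the passage from this absolute statement to the $k$-relative one: one observes that the absolute bijection is natural in $\mY$, identifies $\mathrm{Shv}_k^\mathrm{fpqc}\simeq (\mathrm{Shv}_{\mathbf Z}^\mathrm{fpqc})_{/\Spec(k)}$ and $\CAlg(\mathrm{LPr}_k)\simeq \CAlg(\mathrm{LPr})_{\Mod_k/}$, and shows the relative Hom sets on both sides fit into compatible pullback squares over the absolute ones. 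Your reduction to the affine-source case does not make this issue go away: the remaining claim $\mY(C) \simeq \Hom_{\CAlg(\mathrm{LPr}_k)}(\QCoh(\mY), \Mod_C)$ is still a $k$-linear statement, and your parenthetical appeal to ``the classical Tannakian statement that cocontinuous symmetric monoidal functors $\Mod_B \to \Mod_C$ biject with $\Hom_{\CAlg_k}(B, C)$'' quietly invokes exactly the $k$-linearity you have not justified. You either need to rerun Stefanich's Hopf-algebroid argument internally to $\Mod_k$ (a nontrivial undertaking), or carry out the absolute-to-relative bootstrap as the paper does.
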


\begin{proof}
Observe that the geometric spaces in the sense of Definition \ref{Def of geom space} are special cases of quasi-compact classical geometric stacks with an affine diagonal in the sense of \cite[Subsection 1.1]{German}. In light of that, the analogous absolute (which is to say working over the absolute base $\mathbf Z$ instead of over the chosen base ring $k$) version of the desired result
\begin{equation}\label{German's Tannaka}
\Hom_{\mathrm{Shv}_{\mathbf Z}^\mathrm{fpqc}}(\mX, \mY)\simeq \Hom_{\CAlg(\mathrm{LPr})}(\QCoh(\mY), \QCoh(\mX))
\end{equation}
is proved in \cite[Theorem 1.0.3]{German}. Since it is given by pullback of quasi-coherent sheaves, this equivalence is clearly natural in $\mY$. To prove the relative variant over $k$, it therefore suffices to express the relative Hom sets over $k$ on both sides in terms of the absolute Hom sets in  an analogous way. 
Under the standard identification $\mathrm{Shv}_k^\mathrm{fpqc}\simeq (\mathrm{Shv}_{\mathbf Z}^\mathrm{fpqc})_{/\Spec(k)}$, the canonical maps form a pullback square
$$
\begin{tikzcd}
\Hom_{\mathrm{Shv}_{k}^\mathrm{fpqc}}(\mX, \mY)\ar[d]\ar[r] & \{p\}\ar[d]\\
\Hom_{\mathrm{Shv}_{\mathbf Z}^\mathrm{fpqc}}(\mX, \mY)\ar[r] & {\Hom_{\mathrm{Shv}_{\mathbf Z}^\mathrm{fpqc}}(\mX, \Spec(k))}
\end{tikzcd}
$$
where $p:\mX\to \Spec(k)$ is final morphism in fpqc sheaves over $k$. Similarly we have standard equivalences of categories of algebras $\CAlg(\mathrm{LPr}_k)\simeq \CAlg_{\Mod_k}(\mathrm{LPr}_k)\simeq \CAlg(\mathrm{LPr}_k)_{\Mod_k/}$, and so pullback of quasi-coherent sheaves induces a diagram
$$
\begin{tikzcd}
 \Hom_{\CAlg(\mathrm{LPr}_k)}(\QCoh(\mY), \QCoh(\mX))\ar[d] \ar[r]&   \{p^*\}\ar[d] \\  
 {\Hom_{\CAlg(\mathrm{LPr})}(\QCoh(\mY), \QCoh(\mX))}
 \ar[r] & \Hom_{\CAlg(\mathrm{LPr})}(\Mod_k, \QCoh(\mX))
\end{tikzcd}
$$
which is likewise a pullback square. Since everything in the two squares is mapped to the other by the natural isomorphism \eqref{German's Tannaka}, we obtain the desired result.
\end{proof}

\begin{prop}\label{Tannaka rec}
Let $X$ be a compact Hausdorff space. Its p\'etra space can be canonically identified as
$$
X^\mathrm{p\acute{e}t}_k(A) \simeq \Hom_{\CAlg(\mathrm{LPr}_k)}(\mathrm{Shv}_{\Mod_k}(X), \Mod_A)
$$
for any $k$-algebra $A$.
\end{prop}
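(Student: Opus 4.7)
The plan is to deduce the proposition as a direct consequence of results already in the paper, specifically the relative Tannaka reconstruction (Lemma \ref{German's coup}) combined with the identification of quasi-coherent sheaves on p\'etra spaces from Theorem \ref{Shv = QCoh}.

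First, I would apply the Yoneda lemma to rewrite the $A$-points of the p\'etra space as a mapping set in fpqc sheaves,
$$
X^\mathrm{p\acute{e}t}_k(A) \simeq \Hom_{\mathrm{Shv}_k^\mathrm{fpqc}}(\Spec(A), X^\mathrm{p\acute{e}t}_k).
$$
Next, I would verify that both arguments of this Hom set are geometric spaces over $k$ in the sense of Definition \ref{Def of geom space}. For $\Spec(A)$ this is immediate: it is affine, so it has affine diagonal and is trivially covered by itself. For $X^\mathrm{p\acute{e}t}_k$ this is precisely assertion \eqref{Stone2} of Theorem \ref{condensed is geometric}, which applies because $X$ is assumed to be compact Hausdorff.

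With both arguments being geometric spaces, the hypotheses of the relative Tannaka reconstruction of Lemma \ref{German's coup} are satisfied, so I would invoke it to obtain a canonical bijection
$$
\Hom_{\mathrm{Shv}_k^\mathrm{fpqc}}(\Spec(A), X^\mathrm{p\acute{e}t}_k)
\simeq
\Hom_{\CAlg(\mathrm{LPr}_k)}(\QCoh(X^\mathrm{p\acute{e}t}_k), \QCoh(\Spec(A))).
$$
It then remains to make two substitutions: $\QCoh(\Spec(A))\simeq \Mod_A$ by the defining property of quasi-coherent sheaves on affine schemes recorded in Lemma \ref{What are sheaves}\ref{WASiii}, and $\QCoh(X^\mathrm{p\acute{e}t}_k)\simeq \mathrm{Shv}_{\Mod_k}(X)$ from Theorem \ref{Shv = QCoh}. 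Composing these identifications yields the desired formula, and naturality in $A$ is automatic since every step is functorial.

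There is no genuine obstacle here: the entire proposition is a composition of previously established equivalences, and the main work has been absorbed into the proofs of Theorem \ref{condensed is geometric}, Theorem \ref{Shv = QCoh}, and Lemma \ref{German's coup}. The one point requiring minor care is that the ring $A$ is not assumed to be small, but this is harmless since the conditions defining a geometric space in Definition \ref{Def of geom space} impose no size restriction, and Tannaka reconstruction as stated in Lemma \ref{German's coup} applies to any pair of geometric spaces.
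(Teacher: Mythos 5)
Your proof is correct and takes essentially the same route as the paper: rewrite $X^\mathrm{p\acute{e}t}_k(A)$ as a mapping set via Yoneda, observe both objects are geometric spaces (the latter by Theorem \ref{condensed is geometric}\eqref{Stone2}), apply Lemma \ref{German's coup}, and conclude via the identifications $\QCoh(\Spec(A))\simeq \Mod_A$ and $\QCoh(X^\mathrm{p\acute{e}t}_k)\simeq \mathrm{Shv}_{\Mod_k}(X)$ from Theorem \ref{Shv = QCoh}. You merely make the Yoneda step and the size-theoretic remark explicit where the paper leaves them implicit.
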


\begin{proof}
Since p\'etra spaces of compact Hausdorff spaces, viewed as qcqs condensed sets, are geometric spaces by Theorem \ref{condensed is geometric}, and there are natural $k$-linear symmetric monoidal equivalences $\QCoh(X^\mathrm{p\acute{e}t}_k)\simeq \mathrm{Shv}_{\Mod_k}(X)$ by Theorem \ref{Shv = QCoh}  and $\QCoh(\Spec(A))\simeq \Mod_A$ by the definition of quasi-coherent sheaves, the desired result follows by a direct application of Lemma \ref{German's coup}.
\end{proof}

\begin{remark}
As a consequence of Remark \ref{Tout est plat}, we can replace the module category $\Mod_A$ in the statement of Theorem \ref{Tannaka rec} with the full subcategory $\Mod_A^\flat\subseteq \Mod_A$ of flat modules. That is essentially the context of the  Tannaka reconstruction result \cite[Theorem 9.7.0.1]{SAG}.
\end{remark}

As a special case, we obtain the following result that is surely well-known:

\begin{corollary}\label{Shv is ff on CHaus}
Let $X$ and $Y$ be a pair of compact Hausdorff spaces. 
Pullback of sheaves of $k$-modules induces a natural bijection
$$
\Hom_{\mathrm{CHaus}}(X, Y)\simeq \mathrm{Hom}_{\CAlg(\mathrm{LPr}_k)}(\mathrm{Shv}_{\Mod_k}(Y), \mathrm{Shv}_{\Mod_k}(X)).
$$
In particular, if the categories of sheaves of $k$-modules $\mathrm{Shv}_{\Mod_k}(X)$ and $\mathrm{Shv}_{\Mod_k}(Y)$ are symmetrically monoidally equivalent, then $X$ and $Y$ are homeomorphic.
\end{corollary}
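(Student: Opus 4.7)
The plan is to assemble this corollary as a direct consequence of three earlier results, chained together. Let $X, Y \in \mathrm{CHaus}$. Since $\mathrm{CHaus}$ embeds fully faithfully into $\mathrm{Cond}$ as the full subcategory of qcqs condensed sets, we have $\Hom_{\mathrm{CHaus}}(X,Y) \simeq \Hom_{\mathrm{Cond}}(X,Y)$. First I would apply Theorem \ref{FF on topoi} to pass this into fpqc sheaves via the fully faithful left adjoint $f^*$, obtaining
$$
\Hom_{\mathrm{CHaus}}(X,Y) \simeq \Hom_{\mathrm{Shv}^{\mathrm{fpqc}}_k}(X^\mathrm{p\acute{e}t}_k, Y^\mathrm{p\acute{e}t}_k).
$$

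Next, by Theorem \ref{condensed is geometric}\eqref{Stone2}, both $X^\mathrm{p\acute{e}t}_k$ and $Y^\mathrm{p\acute{e}t}_k$ are geometric spaces over $k$. The relative Tannaka reconstruction result Lemma \ref{German's coup} then yields a bijection
$$
\Hom_{\mathrm{Shv}^{\mathrm{fpqc}}_k}(X^\mathrm{p\acute{e}t}_k, Y^\mathrm{p\acute{e}t}_k) \simeq \Hom_{\CAlg(\mathrm{LPr}_k)}\bigl(\QCoh(Y^\mathrm{p\acute{e}t}_k), \QCoh(X^\mathrm{p\acute{e}t}_k)\bigr),
$$
implemented by pullback of quasi-coherent sheaves. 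Finally, I would invoke Theorem \ref{Shv = QCoh}, which furnishes natural symmetric monoidal equivalences $\QCoh(Z^\mathrm{p\acute{e}t}_k) \simeq \mathrm{Shv}_{\Mod_k}(Z)$ for $Z \in \{X,Y\}$. Composing the three bijections, and tracking naturality so that the resulting map $\mathrm{Shv}_{\Mod_k}(Y) \to \mathrm{Shv}_{\Mod_k}(X)$ is indeed sheaf-theoretic pullback along the corresponding continuous map, produces the first claim.

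For the second assertion, suppose we are given a symmetric monoidal equivalence $\mathrm{Shv}_{\Mod_k}(X) \simeq \mathrm{Shv}_{\Mod_k}(Y)$ in $\CAlg(\mathrm{LPr}_k)$. This amounts to a pair of morphisms in $\CAlg(\mathrm{LPr}_k)$ going in opposite directions whose composites are the respective identities. Under the bijection just established, these correspond to continuous maps $X \to Y$ and $Y \to X$ whose composites are the identities of $X$ and $Y$; that is, to mutually inverse homeomorphisms between $X$ and $Y$.

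There is no serious obstacle here, as the three needed statements are all available earlier in the paper; the only point deserving care is checking that the composite bijection really is implemented by pullback of sheaves of $k$-modules, which follows because each individual bijection is so implemented (Lemma \ref{German's coup} by pullback of $\QCoh$, Theorem \ref{Shv = QCoh} by an identification natural in the space, and Theorem \ref{FF on topoi} because $f^*$ identifies the p\'etra space with the original condensed set on representables).
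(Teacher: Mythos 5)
Your proof is correct and follows essentially the same route as the paper's: chain the full faithfulness of $(-)^\mathrm{p\acute{e}t}_k$ from Theorem \ref{FF on topoi}, the Tannaka reconstruction of Lemma \ref{German's coup} (whose geometric-space hypothesis is supplied by Theorem \ref{condensed is geometric}\eqref{Stone2}), and the identification $\QCoh(Z^\mathrm{p\acute{e}t}_k)\simeq\mathrm{Shv}_{\Mod_k}(Z)$ from Theorem \ref{Shv = QCoh}. The only cosmetic difference is the order in which the chain is traversed, and your argument is a bit more explicit about checking the geometric-space hypothesis, which the paper leaves implicit.
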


\begin{proof}
By applying Lemma \ref{German's coup} in light of Theorem \ref{Shv = QCoh}, we find that
$$
 \Hom_{\CAlg(\mathrm{LPr}_k)}(\mathrm{Shv}_{\Mod_k}(Y), \mathrm{Shv}_{\Mod_k}(X))\simeq \Hom_{\mathrm{Shv}_k^\mathrm{fpqc}}(X_k^\mathrm{p\acute{e}t}, Y_k^\mathrm{p\acute{e}k}).
$$
The rest follows from the functor $X\mapsto X^\mathrm{p\acute{e}t}_k$ being fully faithful by Theorem \ref{FF on topoi}.
\end{proof}

\begin{remark}
For any $\mC\in \mathrm{CAlg}(\mathrm{LPr}_k))$, we define its \textit{Tannakian spectrum} to be the functor
$\mathrm{Spec}^\mathrm T_k(\mC) :\CAlg_k\to \mathrm{Set}$
given by
$$
A\mapsto \Hom_{\CAlg(\mathrm{LPr}_k)}(\mC, \Mod_A).
$$
Since $A\mapsto\Mod_A$ satisfies fpqc descent, this is an fpqc sheaf.
The Tannaka Reconstruction Theorem, in the form of Lemma \ref{German's coup}, then asserts that the composite functor 
$$
\mathrm{GeomSpc}_k\xrightarrow{\QCoh}\CAlg(\mathrm{LPr}_\kappa)^\mathrm{op}\xrightarrow{\mathrm{Spec}_k^\mathrm{T}}\mathrm{Shv}_k^\mathrm{fpqc}
$$
is a factorization of the subcategory inclusion $\mathrm{GeomSpc}_k\subseteq \mathrm{Shv}_k^\mathrm{fpqc}$ of geometric spaces into all fpqc sheaves, and in particular fully faithful. This perspective offers a reinterpretation of the p\'etra space functor.
For any fixed inaccessible cardinal $\kappa$, Lemma \ref{Shv is ff on CHaus} guarantees that the  functor
$$
\mathrm{CHaus}_\kappa \xrightarrow{\mathrm{Shv}_{\Mod_k}}\CAlg(\mathrm{LPr}_k)^\mathrm{op} \xrightarrow{\mathrm{Spec}^\mathrm{T}_k} \mathrm{Shv}_{k, \kappa}^\mathrm{fpqc}
$$
is fully faithful as well. By passage to the category of sheaves on the left-hand side and the equivalence of categories $\mathrm{Cond}_\kappa\simeq \mathrm{Shv}(\mathrm{CHaus}_\kappa, \tau_\mathrm{eff})$  from \cite[Proposition 2.3]{Condensed}, this extends to a geometric morphism
$$
\mathrm{Shv}_{k, \kappa}^\mathrm{fpqc}\to
\mathrm{Cond}_\kappa,
$$
compatible with left Kan extensions along $\kappa<\kappa'$. Passing to the colimit over all inaccessible cardinals $\kappa$, we obtain a functor $\mathrm{Cond}\to \mathrm{Shv}_{k, \kappa}^\mathrm{fpqc}$, which is precisely the p\'etra space functor $X\mapsto X^\mathrm{p\acute{e}t}_k$. Said functor therefore arises as the descent-induced extension of the Tannaka spectrum of sheaves of $k$-modules from compact Hausdorff spaces.
\end{remark}

\section{Variants of condensed Stone duality}\label{Appendix}

In this final section, we collect two variants of the Stone duality story developed in this paper. The first, discussed in Subsection \ref{Subsection 4.1}, is a simple modification of the story above to the ``light condensed" context. The second, and the topic of Subsection \ref{Subsection 4.2}, concerns the largely conjectural extension of the results of this paper to the $\i$-categorical setting.

\subsection{Light condensed sets}\label{Subsection 4.1}
In their recent work on analytic geometry \cite{CS23}, Clausen and Scholze have introduced a slightly modified ``light'' setting for condensed mathematics. It circumvents some of the finer set-theoretical points of the previous non-light approach and is well-suited for foundations of analytic geometry. Its starting point is:

\begin{definition}
A profinite set $S$ is \textit{light} if it is a countable filtered colimit of finite sets inside $\mathrm{ProFin}$. The full subcategory $\mathrm{ProFin}^\mathrm{light}\subseteq\mathrm{ProFin}$ inherits the effective epimorphism topology. The category of \textit{light condensed sets} is the corresponding topos
$$
\mathrm{Cond}^\mathrm{light} \coloneq\mathrm{Shv}(\mathrm{ProFin}^\mathrm{light}, \tau_{\mathrm{eff}}).
$$
\end{definition}

For a fixed cardinal $\kappa$, recall from Section \ref{Section 2} the full subcategories 
$$
\CAlg_{k, \kappa}^\mathrm{Stone}\subseteq\CAlg_{k,  \kappa}\subseteq\CAlg_k
$$
of all $\kappa$-small (Stone) $k$-algebras, i.e.\ $k$-algebras $A$ which satisfy $|A|< \kappa$. Recall also that the countable cardinal $\aleph_0$ has a \textit{successor cardinal}
$\aleph_1$, such that any set is by definition countable if and only if it is $\aleph_1$-small. The following observation, phrased in terms of countability, is due to \cite[Lecture 2]{CS23}

\begin{prop}\label{Stone light def}
Let $k$ be a finite field. The Stone duality equivalence of Theorem \ref{Stone theorem} restricts to an equivalence of categories
$$
\CAlg^\mathrm{Stone}_{k, \aleph_1}
\simeq
(\mathrm{ProFin}^\mathrm{light})^\mathrm{op}.
$$
\end{prop}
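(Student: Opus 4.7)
The plan is to verify that the equivalence $\CAlg_k^{\mathrm{Stone}} \simeq \mathrm{ProFin}^{\mathrm{op}}$ of Theorem \ref{Stone theorem} restricts as claimed by showing that, under this equivalence, countability of a Stone $k$-algebra matches precisely the condition of being a light profinite set, provided the base field $k$ is finite. The finiteness of $k$ is essential here: it ensures that $k^S$ is finite whenever $S$ is, which is the core cardinality bookkeeping that makes both directions work.

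For the forward direction (from profinite sets to Stone algebras), I would start with a light profinite set $S$ and fix a presentation $S = \varprojlim_{n \in \mathbf{N}} S_n$ as a sequential limit of finite sets. Since the Stone duality functor sends such a limit to $\mathcal{C}(S, k) \simeq \varinjlim_n k^{S_n}$, and each $k^{S_n}$ is a finite set when $k$ is finite (of cardinality $|k|^{|S_n|}$), we obtain $\mathcal{C}(S, k)$ as a countable union of finite sets, hence countable, i.e.\ $\aleph_1$-small.

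For the converse, let $A$ be a countable Stone $k$-algebra. I would realize $A$ as the filtered colimit of its $k$-subalgebras of the form $k^T$ for finite sets $T$, indexed by the filtered poset $\mathcal{I} \subseteq \CAlg_k^{\mathrm{min}}{}_{/A}$ of such subalgebras (filteredness was verified in the proof of Proposition \ref{Pearl as a subring}). Each such subalgebra is a finite subset of $A$, so $\mathcal{I}$ embeds into the set of finite subsets of the countable set $A$; in particular, $\mathcal{I}$ is countable. A countable filtered poset admits a cofinal chain, so we may extract a cofinal sequence $k^{S_0} \subseteq k^{S_1} \subseteq \cdots$ exhibiting $A = \varinjlim_{n \in \mathbf{N}} k^{S_n}$ as a countable sequential colimit of finite \'etale $k$-algebras. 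Applying the inverse Stone duality functor $A \mapsto \left|\Spec(A)\right|$ of Proposition \ref{Stone as underspace}, this dualizes to $\left|\Spec(A)\right| \simeq \varprojlim_n S_n$, which is a sequential limit of finite sets, hence a light profinite set.

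Combining the two directions, the equivalence of Theorem \ref{Stone theorem} identifies the full subcategories $\CAlg_{k, \aleph_1}^{\mathrm{Stone}}$ and $(\mathrm{ProFin}^{\mathrm{light}})^{\mathrm{op}}$ on the nose. The only place where any real work is done is the extraction of a cofinal chain in the countable filtered indexing category $\mathcal{I}$, but this is a standard fact about countable filtered posets; the substantive hypothesis in the statement is the finiteness of $k$, without which the algebras $k^{S_n}$ need not be countable and the forward implication would fail.
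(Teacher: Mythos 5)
Your proposal is correct and follows the same approach as the paper's proof: cardinality bookkeeping via the finite building blocks $k^{S_n}$, with the finiteness of $k$ being the essential input. The paper's proof is a terse one-paragraph sketch that asserts countability passes across Stone duality because everything involved is finite; your version usefully spells out the converse direction --- going from countability of the set $A$ to a countable cofiltered presentation of $\left|\Spec(A)\right|$ --- by observing that the poset of finite subalgebras $k^T\subseteq A$ is countable and filtered and so admits a cofinal chain, a step the paper leaves implicit.
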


\begin{proof}
Any profinite set $S$ and any Stone $k$-algebra $A$ may be written respectively in the form $S=\varprojlim_{i\,\in\, \mathcal I} S_i$ and $A=\varinjlim_{j\,\in \, \mathcal J} A^{S_j}$ for some filtered indexing categories $\mathcal I, \mathcal J$ and diagrams of finite sets $S_i, S_j$. By Theorem \ref{Stone theorem}, Stone duality sends a profinite set $S$ to the Stone $k$-algebra
$$
A = \CMcal C(S, k) = \varinjlim_{i\,\in\,\mathcal I}\,\CMcal C(S_i, k)
$$
and a Stone $k$-algebra $A$ to the profinite set
$$
\Hom_{\CAlg_k}(A, k)=\varprojlim_{j\, \in\,\mathcal J}\Hom_{\CAlg_j}(A^{S_j}, k).
$$
Since $S_i, S_j,$ and $k$ are all finite sets, we see that $\mathcal I$ is countable if and only if $\mathcal J$ is.
\end{proof}

\begin{remark}
The conclusion of Proposition \ref{Stone light def} might at first glance seem to be at odds with the $\kappa$-small variant of Stone duality
$$
\CAlg^\mathrm{Stone}_{k, \kappa}\simeq \mathrm{ProFin}_\kappa^\mathrm{op}
$$
 from the proof of Proposition \ref{FF on topoi}.The key difference is that there, we were assuming that $\kappa$ is an inaccessible cardinal, and in particular a strong limit cardinal. That means that $\kappa'<\kappa$ implies $2^{\kappa'}<\kappa$. This fails for the cardinal $\aleph_1$, since while $\aleph_0<\aleph_1$ holds by definition, it follows from Cantor's diagonalization argument that $\aleph_1\le 2^{\aleph_0}$ (whether or not this is an equality, i.e.\ independently of the continuum hypothesis). It is because of this that $\mathrm{ProFin}^\mathrm{light}\supset \mathrm{ProFin}_{\aleph_1}$ is a strict subset. For instance, the Cantor set $\{0, 1\}^{\aleph_0}$ is a light profinite set with the cardinality of the continuum.
\end{remark}

With the interaction with algebra-profinite-level Stone duality established, it is simple to formulate a light version of Theorem \ref{FF on topoi}, i.e.\ to extend light Stone duality from light profinite sets to light condensed sets.

\begin{theorem}\label{light FF}
Let $k$ be a finite field.
The composite of Stone duality of Theorem \ref{Stone theorem} and the subcategory inclusion
$$
\mathrm{ProFin}^\mathrm{light}\simeq (\CAlg^\mathrm{Stone}_{k, \aleph_1})^\mathrm{op}\subseteq \CAlg_{k, \aleph_1}^\mathrm{op}
$$
gives rise to an adjunction
$$
f^* :\mathrm{Cond}^\mathrm{light}\rightleftarrows \mathrm{Shv}_{k, \aleph_1}^\mathrm{fpqc} :f_*
$$
whose left adjoint $f^*$ is fully faithful and commutes with all finite limits.
\end{theorem}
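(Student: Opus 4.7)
The plan is to adapt the proof of Theorem \ref{FF on topoi} essentially verbatim, working at the single cardinal $\kappa = \aleph_1$ throughout rather than passing to a colimit over strongly inaccessible cardinals. Because we are content with an adjunction between the topoi $\mathrm{Cond}^\mathrm{light}$ and $\mathrm{Shv}_{k, \aleph_1}^\mathrm{fpqc}$ rather than their non-small variants, the set-theoretic passage of Proposition \ref{up the beanstalk} is unnecessary, and $\aleph_1$-smallness (countability) replaces the role previously played by inaccessibility.

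First I would verify that every step in the proof of Theorem \ref{FF on topoi} respects $\aleph_1$-smallness, which is the only place the original argument used strong inaccessibility of $\kappa$. The forgetful functor $U$ clearly preserves countability, and by Proposition \ref{Pearl as a subring} the pearl $(-)^\circ$ does too since the canonical map $A^\circ \hookrightarrow A$ is injective. Because $k$ is finite, countable tensor products and quotients of countable $k$-algebras remain countable, so the subcategories $\CAlg_{k, \aleph_1}$ and $\CAlg_{k, \aleph_1}^\mathrm{Stone}$ are closed under the finite-colimit and subquotient operations invoked in the proof. With these observations, the construction of the original proof produces, with the aid of Proposition \ref{site preservation lemma}, an adjunction on the $\aleph_1$-level
\[
(((-)^\circ)^*)^\dagger:\mathrm{Shv}\bigl((\CAlg_{k, \aleph_1}^\mathrm{Stone})^\mathrm{op}, \tau_\mathrm{fpqc}\bigr)
\rightleftarrows
 \mathrm{Shv}^\mathrm{fpqc}_{k, \aleph_1} : U^*,
\]
which is a geometric morphism of topoi (so its left adjoint preserves finite limits) by the standard morphism-of-small-sites recipe \cite[Theorem VII.10.4]{Mac Lane-Moerdijk}.

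To identify the source of this geometric morphism with $\mathrm{Cond}^\mathrm{light}$, I combine Proposition \ref{Stone light def}, which provides the categorical equivalence $\mathrm{ProFin}^\mathrm{light} \simeq (\CAlg_{k, \aleph_1}^\mathrm{Stone})^\mathrm{op}$ and which uses the finiteness of $k$, with Proposition \ref{Prop1}, which promotes this to an isomorphism of sites carrying the fpqc topology to the effective epimorphism topology. Passage to sheaf categories yields the desired equivalence $\mathrm{Cond}^\mathrm{light} \simeq \mathrm{Shv}((\CAlg_{k, \aleph_1}^\mathrm{Stone})^\mathrm{op}, \tau_\mathrm{fpqc})$, completing the construction of the stated adjunction.

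Fully faithfulness of $f^*$ is the main content, and it is established by repeating the separated-presheaf/plus-construction argument from the proof of Theorem \ref{FF on topoi}. Given a light condensed set $X : \CAlg_{k, \aleph_1}^\mathrm{Stone} \to \mathrm{Set}$, the unit is the restriction of $(X \circ (-)^\circ)^\dagger$ back to Stone $k$-algebras. One shows $X \circ (-)^\circ$ is a separated presheaf using that the pearl preserves faithful flatness (combining Proposition \ref{site preservation lemma} with Lemma \ref{FF for Stone}), so its sheafification is computed by the Grothendieck plus construction. The faithfully flat map $B^\circ\otimes_A B^\circ\to(B\otimes_A B)^\circ$ from Proposition \ref{Pearl and products}, combined with the separatedness of $X$ itself, collapses the resulting equalizer to $X(A)$, exactly as in the original argument. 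The only substantive point requiring care, and the main place where one should pause, is the bookkeeping that all intermediate algebras $B$, $B^\circ$, $B\otimes_A B$, and $(B\otimes_A B)^\circ$ remain $\aleph_1$-small throughout, which is ensured by the countability observations collected at the outset.
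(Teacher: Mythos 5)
Your proposal is correct and follows essentially the same route as the paper, which likewise reduces to re-running the proof of Theorem~\ref{FF on topoi} at the fixed cardinal $\aleph_1$ and invokes Proposition~\ref{Stone light def} for the site identification and Proposition~\ref{Pearl as a subring} to see that the pearl functor preserves $\aleph_1$-smallness. One small imprecision: your countability-closure observations (tensor products and quotients of countable $k$-algebras are countable) hold for any base field, not only finite ones; the finiteness of $k$ is genuinely needed only in Proposition~\ref{Stone light def}.
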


\begin{proof}
Because Proposition \ref{Stone light def} shows that light profinite sets are compatible with Stone duality, most of the proof of Theorem \ref{FF on topoi} applies to this context. For instance, we need to observe that the pearl functor restricts to a right adjoint $(-)^\circ : \CAlg_{k, \aleph_1}\to\CAlg_{k, \aleph_1}^\mathrm{Stone}$  to the forgetful functor; but that follows from Proposition \ref{Pearl as a subring}.
\end{proof}

\begin{remark}
Unlike Theorem \ref{FF on topoi} which holds over any field $k$, its light version Theorem \ref{light FF} requires working over a finite field. That is because the finiteness assumption is crucial for Proposition \ref{Stone light def}, the light version of algebra-level Stone duality.
\end{remark}

\begin{remark}
Under left Kan extension $\mathrm{Shv}_{k, \aleph_1}^\mathrm{fpqc}\to\mathrm{Shv}^\mathrm{fpqc}_k$ (viewed as a subcategory of accessible presheaves on $\CAlg_k$), ``light p\'etra spaces'' are sent to those p\'etra spaces of condensed sets which are left Kan extended from countable $k$-algebras.
That
might seem very restrictive, but since we are working over a finite field, any finitely presented $k$-algebra is automatically countable. It follows that any fpqc sheaf $\mX :\CAlg_k\to \mathrm{Set}$ which is locally of finite presentation, in the sense that it preserves all small filtered colimits, belongs to $\mathrm{Shv}_{k, \aleph_1}^\mathrm{fpqc}$. As such, passing from all condensed sets to light condensed sets is formally analogous to restricting attention only to algebro-geometric objects which are locally of finite presentation\footnote{For a more perfect analogy, we should really use the ``locally of countable presentation'' assumption, but that is not as common in algebraic geometry.} and reaping the benefits of the set-theoretic simplifications and other nuances that this affords in setting up the foundations.
\end{remark}

\subsection{Towards higher Stone duality}\label{Subsection 4.2}
All discussion in this paper so far has been limited to the 1-categorical case. But both sides of Stone duality, i.e.\ both condensed set and fpqc sheaves over a field $k$, admit natural $\i$-categorical extensions. These are:
\begin{itemize}
\item The $\i$-category $\mathrm{Cond}(\mS)$ of \textit{condensed anima\footnote{Also known as $\i$-groupoids, spaces, weak homotopy types, etc.}}  in the sense of \cite{Condensed} (i.e.\ up to set-theoretic nuances, the \textit{pyknotic spaces} of \cite{Pyknotic}), consisting of those accessible functors $X:\mathrm{ProFin}^\mathrm{op}\to\mS$ which satisfy hyperdescent with respect to the effective epimorphism topology.

\item The $\i$-category $\mathrm{Stack}_k^\mathrm{fpqc}$ of \textit{$\i$-groupoid-valued fpqc stacks\footnote{These are \textit{higher stacks}, but not \textit{derived stacks}; their functors of points are valued in $\i$-groupoids, but are defined on the usual 1-category of $k$-algebras as opposed to the $\i$-category of animated $k$-algebras.}  over $k$}, consisting of those accessible functors $\mX:\CAlg_k\to\mS$ which satisfy fpqc hyperdescent.
 \end{itemize}

 \noindent Just as in the proof of Theorem \ref{FF on topoi},  Proposition \ref{site preservation lemma} extends the profinite-level Stone duality of Theorem \ref{Stone theorem} to a canonical $\i$-categorical adjunction
\begin{equation}\label{Higher}
f^*:\mathrm{Cond}(\mS)\rightleftarrows \mathrm{Stack}_k^\mathrm{fpqc}:f_*,
\end{equation}
whose left adjoint preserves finite limits. 

\begin{quest}
Is the left adjoint $f^*$ of \eqref{Higher} fully faithful?
\end{quest}

\noindent We can attempt to imitate the proof of the corresponding set-valued analogue Theorem \ref{FF on topoi} in this setting. It reduces answering the question in the affirmative to:

\begin{quest}
Does the pearl functor $(-)^\circ :\CAlg_k\to \CAlg_k$, equivalent by Proposition \ref{Pearl as pi_0} to $A\mapsto \pi_0(\left |\Spec(A)\right |)$, preserve fpqc (co)hypercovers of any Stone $k$-algebra?
\end{quest}

\noindent Unfortunately, we do not know if this is true. An affirmative answer would, in conjunction with \cite[Example 3.3.10]{Pyknotic}, give a fully faithful embedding of pro-$\pi$-finite $\i$-groupoids into fpqc stacks
$$
\mathrm{Pro}(\mS^\pi)\hookrightarrow\mathrm{Cond}(\mS)\xrightarrow{f^*}\mathrm{Stack}_k^\mathrm{fpqc}.
$$
Depending on $k$, this might land inside the subcategory of Toën's \textit{affine $k$-stacks} \cite{Toen} (called \textit{coaffine stacks} in \cite[Section 4]{DAG8}), which should then be related to Sullivan-type ``rational homotopy theory'' results as discussed in \cite{Antieau}.

\end{document}